\documentclass{article}
\usepackage{amsmath}
\usepackage{amsthm}
\usepackage{amsfonts}
\usepackage{amssymb}
\usepackage[all]{xy}
\usepackage{amscd}

\font\smallsc=cmcsc10
\font\smallsl=cmsl10

\newtheorem{Lem}{Lemma}[section]
\newtheorem{Prop}[Lem]{Proposition}
\newtheorem{Thm}[Lem]{Theorem}
\theoremstyle{definition}
\newtheorem{Def}[Lem]{Definition}
\newtheorem{Exa}[Lem]{Example}
\newtheorem{Rem}[Lem]{Remark}

\newcommand{\E}{\mathcal E}
\newcommand{\W}{\mathcal W}
\newcommand{\A}{\mathcal A}

\newcommand{\F}{\mathcal F}
\newcommand{\Z}{\mathcal Z}
\newcommand{\I}{\mathcal I}
\newcommand{\M}{\mathcal M}
\newcommand{\N}{\mathcal N}
\newcommand{\T}{\mathcal T}
\newcommand{\K}{\mathcal K}
\renewcommand{\S}{\mathcal S}
\newcommand{\R}{\mathcal R}
\renewcommand{\L}{\mathcal L}
\renewcommand{\O}{\mathcal O}
\newcommand{\X}{\mathcal X}
\newcommand{\C}{\mathcal C}
\newcommand{\Y}{\mathcal Y}
\newcommand{\col}{\colon}

\newcommand{\ra}{\rightarrow}
\newcommand{\ol}{\overline}
\newcommand{\ul}{\underline}
\newcommand{\ox}{\otimes}
\newcommand{\IP}{\mathbb P}
\newcommand{\IA}{\mathbb A}
\newcommand{\wh}{\widehat}
\newcommand{\J}{\ol J}
\newcommand{\w}{\bigomega}
\newcommand{\lra}{\longrightarrow}
\renewcommand{\:}{\colon}

\newcommand{\wt}{\widetilde}
\renewcommand{\P}{\mathcal P}

\def\cocoa{{\hbox{\rm C\kern-.13em o\kern-.07em C\kern-.13em o\kern-.15em A}}}

\begin{document}

\title{Degree-2 Abel maps for nodal curves}
\author{J.~Coelho, E.~Esteves and M.~Pacini}
\maketitle

\begin{abstract}
\noindent
We present numerical conditions for the existence of natural 
degree-2 Abel maps 
for any given nodal curve. \cocoa\ scripst were written and have so far verified 
the validity of the conditions for numerous curves.
\end{abstract}

\section{Introduction}

The theory of Abel maps for curves goes back to work by Abel in the 
early nineteenth century. It was Riemann though, in his seminal 1857 paper 
\cite{Ri}, 
that introduced the maps themselves; see \cite{KPic} for the history. 
In modern terms, given a (connected projective) curve $C$ defined over an 
algebraicaly closed field $K$, if $C$ is smooth then 
its degree-$d$ Abel map is a map $A^d_\L\:S^d(C)\to J_C$ from the $d$-th 
symmetric product of the curve to its Jacobian $J_C$, given by taking a sum 
of $d$ points of $C$, say $Q_1+\cdots+Q_d$, to $\L\ox\O_C(-Q_1-\dots-Q_d)$. 
Here, $\L$ is a line bundle of degree $d$, the particular choice of which 
being irrelevant.

The importance of the Abel maps appears already in the work by Abel: They 
encode a lot of geometric information about the curve. More precisely, 
the fibers of $A^d_\L$ are the linear equivalence classes of
degree-$d$ effective 
divisors of $C$. Thus, all possible embeddings of $C$ in projective spaces 
are known once we know its Abel maps.

The 1857 paper by Riemann introduced as well the notion of ``moduli,'' 
essentially by stating, in modern terms, that the moduli space $M_g$ of 
genus-$g$ smooth curves has dimension $3g-3$, if $g\geq 2$. The space itself 
was constructed almost a century afterwards, followed by the 
construction of its compactification $\ol M_g$, the moduli space of 
(Deligne--Mumford) stable curves. Those are the curves, possibly
reducible, whose only singularities are ordinary nodes and 
whose group of automorphisms is finite. Abel maps are so naturally defined 
that they vary well in families of smooth curves. It is thus natural to ask 
what happens when smooth curves degenerate to stable ones.

Not only natural, but potentially very useful. Understanding degenerations 
of Abel maps is tantamount to understanding degenerations of linear series. 
It was through the study of these degenerations that the celebrated 
Brill--Noether and Gieseker--Petri Theorems 
were proved; see \cite{GH} and 
\cite{Gi}. The approach to these theorems has been systematized by 
the theory of limit linear series on curves of compact type, developed 
by Eisenbud and Harris; see \cite{HM}. Other results have been obtained 
through the theory; see \cite{EH}.

The theory of Eisenbud and Harris points out to a partial compactification 
of the variety of linear series over the moduli space of curves of 
compact type. It is natural to ask whether one can extend this 
compactification over the whole $\ol M_g$, as Eisenbud and Harris themselves 
asked in \cite{EH2}. The supposedly intermediate step, that of constructing 
a compactification of the relative Picard scheme over $\ol M_g$, has been 
carried out by Caporaso \cite{C}. But the final step has proved to be
very difficult.

Sketches on how to deal with limit linear series for stable curves not of 
compact type are sparse in the literature. Recently, Medeiros and the 
second author \cite{EM} were able to describe limit canonical series on 
curves with two components, using the theory presented in \cite{Es}, the 
ingredients of which having already appeared in \cite{R}. But their 
description is rather complicated, and relies on the strong assumption 
that the components intersect each other at points in general position
on each component. The same assumption is present in \cite{EP}, where 
the case of curves with more components is partially considered.

In view of the difficulties, one might ask whether we stand a better 
chance of compactifying the variety of linear series over $\ol M_g$ by 
looking at degenerations of Abel maps. Indeed, Abel maps of all degrees 
have been constructed and studied for integral curves in \cite{AK}. Degree-1 
Abel maps for stable curves were constructed in \cite{CE} and studied in 
\cite{CCE}. Higher-degree Abel maps for curves of compact type appeared soon 
afterwards in \cite{CP}. 

At this point one might say that the theory of 
degenerations of Abel maps is on even terms with that of limit linear series. 
A comparison between the two theories has in fact appeared in \cite{EO}, 
based on the more refined notion of limit linear series introduced by 
Osserman \cite{O}, and thus limited to two-component curves. 

The present article advances the theory of degenerations of Abel maps, 
by presenting purely numerical conditions for the existence of degree-2 
Abel maps for any given nodal curve. Whether the curve is stable or not is 
immaterial. The specific moduli, even the genera of its irreducible 
components is immaterial. Where the points of intersection of components 
lie on each component is immaterial, in stark contrast with \cite{EM} and 
\cite{EP}. In fact, for nodal curves with the same dual graph, either all 
of them or none of them have 
degree-2 Abel maps. An algorithm has been implemented by means of 
\cocoa\ scripts, available at
$$
\text{\rm http://w3.impa.br/$\sim$esteves/CoCoAScripts/Abelmaps}
$$
to check the validity of these conditions for any 
given curve and has, so far, always returned a positive answer.
Our approach is simply to extend the construction done in \cite{Co} 
for two-component two-node 
curves. 
Here is what we do. Assume all the singularities of $C$ are ordinary nodes. 
Let $C_1,\dots,C_p$ denote the components of $C$. 
Instead of dealing with the curve $C$ itself, we 
consider a (one-parameter) smoothing $\C/B$ of $C$, 
that is, a flat, projective map $\C\to B$ to $B:=\text{Spec}(K[[t]])$ whose 
generic fiber $X$ is smooth and whose 
closed fiber is isomorphic to $C$, whence identified with $C$ by any chosen 
isomorphism. We assume $\C$ is regular. 
Furthermore, instead of considering the symmetric product, we consider 
the ordinary Cartesian product $\C^2:=\C\times_B\C$. The Jacobian is replaced 
by the so-called compactified Jacobian $\ol{\mathcal J}$, parameterizing 
torsion-free, rank-1 sheaves on the fibers of $\C/B$ 
that are $C_1$-quasistable with respect to a fixed polarization 
$\ul e=(e_1,\dots,e_p)$ on 
$C$; see Section \ref{sect2.2}. Its generic fiber over $B$ is isomorphic to 
the Jacobian of $X$. Finally, let $\P$ be a line bundle on $\C$ of 
relative degree $e_1+\cdots+e_p+2$ over $B$.

Consider the rational map $\alpha\:\C^2\dashrightarrow\ol{\mathcal J}$ 
defined over 
the generic point of $B$ by taking a pair of points $(Q_1,Q_2)$ on 
$X$ to $\P|_X\ox\O_X(-Q_1-Q_2)$. Using ``twisters,'' 
it is not difficult to show 
that $\alpha$ extends over the smooth locus of $\C^2/B$, whose complement 
is the codimension-2 locus whose components are of the form $N\times
C$ or $C\times N$, with $N$ a node of $C$; see Section \ref{2.3}. The map does not 
necessarily extend to the whole $\C^2$; already the simple case dealt
with in \cite{Co} shows this. Our goal is to produce a minimal 
resolution for $\alpha$.

By a minimal resolution for $\alpha$ we mean a map 
$\phi\:\wh\C^2\to\C^2$ which is an isomorphism away from the pairs $(R,S)$ 
of nodes of $C$, where either $R$ and $S$ are both reducible, or 
$R=S$. Furthermore, for such a pair $(R,S)$ we want that either $\phi$ 
be an isomorphism at 
it or $\phi^{-1}(R,S)\cong\IP^1_K$, the former only if $R\neq S$ and 
$\alpha$ extends over $(R,S)$. 
We want as well that $\phi$ be symmetric, that is, that the switch involution 
$\iota$ of $\C^2$ lift to an involution $\wh\iota$ of $\wh\C^2$, as we 
want to take the quotient of $\wh\C^2$ by the action of $\wh\iota$ to end up 
with a partial resolution of the relative symmetric product $S^2(\C/B)$. 
Finally, we want a map $\wh\alpha\:\wh\C^2\to\ol{\mathcal J}$ such that 
$\wh\alpha=\wh\alpha\wh\iota$ and $\wh\alpha=\alpha\phi$. Our 
Theorem \ref{mainthm} describes such a map $\phi$ when certain 
numerical conditions are verified on the dual graph $\Gamma_C$ of $C$.  

When these conditions are verified, 
our $\phi$ is produced by a sequence of blowups. The first 
is the blowup along the diagonal $\Delta\subset\C^2$, resulting in a flag 
scheme; see Section \ref{flag}. Clearly, $\iota$ lifts to an involution of 
this blowup; the quotient by the action of the lift is the relative 
Hilbert scheme $\text{Hilb}^2_{\C/B}$. We are thus producing a 
partial resolution of $\text{Hilb}^2_{\C/B}$ as well. The remaining blowups 
are along the (strict transforms of) products $Y\times Z$ of 
proper subcurves of 
$C$, chosen according to the numerical conditions verified on $\Gamma_C$. To 
make sure that $\iota$ lifts to an involution on $\wh\C^2$, if a blowup 
in the resolution sequence is along $Y\times Z$ for $Y\neq Z$, we require the next 
blowup to be along $Z\times Y$.

In Sections \ref{3} and \ref{4} we describe the effect of successive blowups 
of (modifications of) $\C^2$ along (the strict transforms of) products of 
subcurves of $C$. 
Being $\C$ regular, the product $\C^2$ is regular away from pairs 
$(R,S)$ of nodes of $C$. Blowing up $\C^2$ along $Y\times Z$ for 
proper subcurves $Y$ and $Z$ of $C$ desingularizes $\C^2$ at the pairs 
$(R,S)\in (Y\cap Y')\times (Z\cap Z')$, where $Y':=\ol{C-Y}$ and 
$Z':=\ol{C-Z}$, by ``adding'' $\IP^1_K$ over each such pair. The added 
$\IP^1_K$ belongs to the strict transforms of $Y\times Z$ and 
$Y'\times Z'$ but not to those of $Y\times Z'$ and $Y'\times Z$. The 
blowups along $Y\times Z$ and $Y'\times Z'$ are the same, but
different from those along $Y'\times Z$ and $Y\times Z'$.

We give a brief explanation of the nature of the numerical conditions we 
check on $\Gamma_C$; 
more details can be found in Section \ref{6}. Let $\Gamma^0_C$ be the 
essential dual graph of $C$, obtained from $\Gamma_C$ by removing the 
edges with equal ends. (Thus we are ignoring the irreducible nodes of $C$.) 
Let $V$ be the set of vertices and $E$ the set of edges of $\Gamma^0_C$. 
Then $V=\{C_1,\dots,C_p\}$. We 
may view $\ul e$ as a map $\ul e\:V\to\mathbb Q$ taking $C_i$ to $e_i$ for each 
$i=1,\dots,p$. We may also view the multidegree of $\P|_C$ as a map 
$\ul q\:V\to\mathbb Z$, 
taking $C_i$ to $\deg(\P|_{C_i})$ for each $i=1,\dots,p$. 
Set $v:=C_1$. We call $(\Gamma^0_C,\ul e,\ul q,v)$ degree-2 Abel data. 

A resolution of the degree-2 Abel data $(\Gamma^0_C,\ul e,\ul q,v)$ is a 
map $\ul r\:E^2\to V^2$ sending the diagonal of $E^2$ to that of $V^2$ 
and such that $\ul r(R,S)_1$ is an end of $R$ and $\ul r(R,S)_2$ is an 
end of $S$. We define when a resolution $\ul r$ is quasistable; see 
Subsections \ref{admfun} and \ref{admfun2}. 
The definition is based only on the (combinatorial) degree-2 Abel data. 

The singular locus $\Sigma$ 
of the Abel data is the subset of pairs $(R,S)\in E^2$ such that for any two 
quasistable resolutions $\ul r_1$ and $\ul r_2$, either 
$\ul r_1(R,S)=\ul r_2(R,S)$ or $\ul r_1(R,S)_j\neq\ul r_2(R,S)_j$ for 
$j=1,2$; see Definition \ref{admfun3}. 
Notice that $\Sigma$ contains the diagonal $\Delta_E$ of $E^2$. 
We say that $\Sigma$ is solvable if quasistable 
resolutions exist.

A blowup sequence for the Abel data is a pair $(I_1,I_2)$ of sequences 
$I_1=(I_{1,1},\dots,I_{1,u})$ and $I_2=(I_{2,1},\dots,I_{2,u})$ of
equal lengths of proper nonempty subsets of $V$. It is called 
symmetric if the subsets are symmetric to each other, that is, 
whenever $I_{1,j}\neq I_{2,j}$ we have 
$(I_{1,j-1},I_{2,j-1})=(I_{2,j},I_{1,j})$ or
$(I_{1,j+1},I_{2,j+1})=(I_{2,j},I_{1,j})$. The center $\Xi$ of the blowup
sequence is the set of pairs $(R,S)\in E^2$ such that
either $R=S$ or  there is $j$ such that 
one and only one end of $R$ lies in $I_{1,j}$ 
and one and only one end of $S$ lies in $I_{2,j}$. We call the minimum
such $j$ the order of $(R,S)$ in the blowup sequence.

We say that a blowup sequence $(I_1,I_2)$ resolves the Abel
data if $\Sigma$ is solvable, $\Xi\supseteq\Sigma$, and 
for each $(R,S)\in\Sigma-\Delta_E$, 
any quasistable resolution $\ul r$ satisfies 
$\ul r(R,S)\in I_{1,j}\times (V-I_{2,j})$ or 
$\ul r(R,S)\in (V-I_{1,j})\times I_{2,j}$, where $j$ is the order of $(R,S)$.
We say that $(I_1,I_2)$ resolves the Abel data minimally if 
$\Xi=\Sigma$. 

Given a blowup sequence $(I_1,I_2)$, 
consider the map $\phi\:\wh\C^2\to\C^2$ 
obtained by blowing up $\C^2$ along the diagonal and then successively 
along the strict transforms of $Y_1\times Z_1,\dots,Y_u\times Z_u$, 
where $Y_j:=\cup_{l\in I_{1,j}}C_l$ and $Z_j:=\cup_{l\in I_{2,j}}C_l$ for 
$j=1,\dots,u$. If $(I_1,I_2)$ is symmetric, so
is $\phi$. Our Theorem \ref{mainthm} says that, if $(I_1,I_2)$
resolves the Abel data minimally, then $\phi$ 
is a minimal resolution for $\alpha$. 

The biggest unanswered 
question is whether the singular locus of a degree-2 Abel data is always 
solvable. Numerous examples checked by the implemented algorithm suggest 
that, not only is the answer positive, but also that there is a
symmetric blowup sequence resolving the Abel data minimally, and thus
producing a symmetric minimal resolution for $\alpha$. 
Also, the third
author \cite{Pac} showed that, when $\mathcal P|_C$ has multidegree
$(2,0,\dots,0)$ and $\ul e=(0,0,\dots,0)$, the singular locus is
solvable. Furthermore, in this case there is a blowup sequence
$(I_1,I_2)$ resolving the Abel data (possibly nonminimally),
where $I_{1,j}=I_{2,j}$ for every $j$, and the subcurves 
$Y_j:=\cup_{l\in I_{1,j}}C_l$ are 2-tails or 3-tails.

A related question would be: Can we understand 
the minimal resolution found in a functorial way, that is, does it 
represent a natural functor? It might be easier to answer the first 
question after the second is answered.

Restricting $\phi\:\wh\C^2\to\C^2$ and 
$\wh\alpha$ over the special 
point of $B$ we obtain a ``resolution'' $\wh C^2\to C^2$ and an ``Abel map'' 
$\wh C^2\to\ol J$. The ``resolution'' does not depend on the smoothing 
$\C/B$ taken but the ``Abel map'' does. More precisely, the ``Abel map'' 
depends (only) on the resulting enriched structure, as defined by Main\`o 
\cite{M}.

In the process leading to the proof of Theorem \ref{mainthm}, we are 
led in Sections \ref{3} and \ref{4} 
to understand blowups of the triple product $\C^3:=\C\times_B\C\times_B\C$. 
It turns out in Section \ref{6} 
to be immaterial what sequence of blowups of $\C^3$ 
we perform. However, the information 
we gather will certainly be useful for anyone trying to construct 
degree-3 Abel maps. We have this in mind in Section \ref{5} as well, where 
we study the relationship between torsion-free, rank-1 sheaves on $C$ and on 
the curve obtained from $C$ by replacing each node by a chain of smooth 
rational curves of variable length (including length 0, which in fact 
corresponds to not replacing the node). For our immediate purposes, the 
length is at most 2, but anyone dealing with higher-degree Abel maps will 
likely profit from our study of the general case.

We worked with the compactification by quasistable sheaves produced in 
\cite{E01}, which is a fine moduli space. Another compactification, that 
considered in \cite{C} or \cite{Sesh}, could have been used. Since there is 
a map from the former to the latter, we produce degree-2 Abel maps 
to the latter as well. 
However, by the same reason, a priori it could be easier to 
construct these maps directly.

In short, here is a summary of the article. In Section \ref{2}, we review the 
theory of compactified Jacobians, how one can use ``twisters'' to extend 
Abel maps up to codimension 2, and the construction of the flag scheme. 
In Section \ref{3}, we describe the blowups of $\C^2$ and $\C^3$ in the 
local analytic setting. In Section \ref{4}, we use this local analysis to 
describe globally blowups of $\C^2$ and $\C^3$. In Section \ref{5}, 
we study the relationship between torsion-free, rank-1 sheaves on $C$ and on 
the curve obtained from $C$ by replacing each node by a chain of smooth 
rational curves of variable length. In Section \ref{6}, we prove our main 
theorem, Theorem \ref{mainthm}, already discussed above. Finally, in 
Section \ref{7}, we present examples of curves for which symmetric minimal 
resolutions for degree-2 Abel maps exist, presenting the resolution in each 
case.

\section{Compactified Jacobians}\label{2}

\subsection{Curves and their smoothings}\label{preli}

A \emph{curve} is a reduced, projective scheme of pure dimension 1 
over an algebraically closed field $K$. A curve may have several 
irreducible components, which will be simply called \emph{components}. 
We will always assume our curves to be \emph{nodal}, 
meaning that the singularities are \emph{nodes}, that is, 
analytically like the origin on the 
union of the coordinate axes of the plane $\mathbb A^2_K$. A node is 
called \emph{irreducible} if its removal does not disconnect the curve in any 
Zariski neighborhood of it; otherwise, it is called \emph{reducible}. 

Let $C$ be a curve defined over an algebraically closed field $K$. 
Its \emph{dual graph} $\Gamma_C$ is the graph whose 
set of vertices $V_C$ consists of the components of $C$ and whose 
set of edges $E_C$ consists of 
its singularities, the ends of an edge being the components on 
which the corresponding singularity lies. The \emph{essential dual graph} 
$\Gamma^0_C$ is the subgraph of $\Gamma_C$ where the edges with equal ends, 
corresponding to irreducible nodes of $C$, are removed.

A \emph{subcurve} of 
$C$ is a (nonempty, reduced) union of components of 
$C$. It is a curve by itself. 
If $Y$ is a proper subcurve of $C$, we let $Y':=\overline{C-Y}$, and call it the 
\emph{complementary subcurve} of $Y$. We set $k_Y:=\#(Y\cap Y')$.  

A \emph{chain of rational curves} is a 
curve whose components are smooth and
rational and can be ordered, $E_1,\dots,E_n$, in such a way that 
$\#E_i\cap E_{i+1}=1$ for $i=1,\dots,n-1$ and  
$E_i\cap E_j=\emptyset$ if $|i-j|>1$.  If $n$ is the number of components, 
we say that the chain has \emph{length $n$}. Two chains of 
the same length are isomorphic. The
components $E_1$ and $E_n$ are called the \emph{extreme curves} of the 
chain. A connected subcurve of a chain is also a chain, and is called a 
\emph{subchain}.

Let $\N$ be a collection of nodes of $C$, 
and $\eta\:\N\to\mathbb N$ a function. Denote by $\wt C_\N$ the partial 
normalization of $C$ at $\N$. For each $P\in\N$, let $E_P$ be a chain of 
rational curves of length $\eta(P)$. Let $C_\eta$ denote the curve 
obtained as the union of $\widetilde C_\N$ and the $E_P$ for $P\in\N$ in the 
following way: Each chain $E_P$ intersects no other chain, but intersects 
$\wt C_\N$ transversally at two points, the branches over $P$ on $\wt C_\N$ 
on one hand, and nonsingular points on each of the two extreme curves of $E_P$  
on the other hand. There is a natural map 
$\mu_\eta\:C_\eta\to C$ collapsing each 
chain $E_P$ to a point, whose restriction to $\wt C_\N$ is the partial 
normalization map. The curve $C_\eta$ and the map $\mu_\eta$ are well-defined 
up to $C$-isomorphism.

There are two special cases of the above construction that will be useful 
for us. First, if $\N=\{R\}$ and $\eta$ takes $R$ to $1$, let 
$C_R:=C_\eta$ and $\mu_R:=\mu_\eta$. Second, if $\N=\N(C)$, where 
$\N(C)$ is the collection of reducible nodes of $C$, and 
$\eta\:\N(C)\to\mathbb N$ is the constant function with value $m$, let 
$C(m):=C_\eta$ and $\mu(m):=\mu_\eta$. Set 
$C(0):=C$ and $\mu(0):=\text{id}_C$.

A \emph{family of} (\emph{connected}) \emph{curves} 
is a proper and flat morphism 
$\pi\colon\mathcal C\rightarrow B$ whose geometric fibers are
connected curves. (All schemes are assumed locally Noetherian.)
If $b$ is a geometric point of $B$, we put $C_b:=\pi^{-1}(b)$. 
A \emph{smoothing} of $C$ is a generically smooth family of curves 
$\pi\colon\C\rightarrow B$ over an irreducible scheme $B$, together with a 
point $0\in B$ whose residue field is $K$, and an 
isomorphism $C_0\cong C$. We will always assume 
$B\cong\text{Spec}(K[[t]])$ and $\C$ is regular at the reducible 
nodes of $C_0$. Also, the isomorphism $C_0\cong C$ will be left implicit.

The \emph{degree}, or \emph{total degree}, of a coherent sheaf $\F$ 
of constant generic rank 1 on $C$ is 
$$
\deg(\F):=\chi(\F)-\chi(\O_C).
$$
We will identify line bundles with invertible sheaves. If $\F$ is an 
invertible sheaf, its \emph{multidegree} is the function 
$\underline d_\F\:V_C\to\mathbb Z$ that to each 
component $Y$ of $C$ associates the integer $\deg(\F|_Y)$. Given an ordering 
of the components of $C$, we may also view $\ul d_\F$ as a tuple of integers.

Given a smoothing $\C/B$ of $C$, a \emph{twister} of $\C/B$ is a 
special line bundle of degree 0 on $C$, of the form $\O_{\C}(Z)|_C$, 
where $Z$ is a Cartier divisor of $\C$ supported in $C$, so a formal sum of 
components of $C$. (Notice that each component of $C$ is a Cartier divisor 
of $\C$ because $\C$ is regular at the reducible nodes of $C$.) 
A twister has degree 0 by continuity of the 
degree, since $\O_{\C}(Z)$ is trivial away from $C$. 

The smoothing $\C/B$ being fixed, two divisors $Z_1$ and $Z_2$ 
of $\C$ supported in $C$ give isomorphic twisters, 
$\O_{\C}(Z_1)|_C\cong\O_{\C}(Z_2)|_C$, if and only 
if they give twisters of the same multidegree, if and only if $Z_1-Z_2$ is a 
multiple of $C$; see \cite{CAJM}, Lemma~3.4, p.~7.
The multidegree of a twister $\O_{\C}(Z)|_C$ depends only on $Z$, but the 
twister itself depends on the smoothing $\C/B$. As 
soon as the latter is fixed, we will use the notation
$$
\O_C(Z):=\O_{\C}(Z)|_C.
$$
In her thesis \cite{M}, Main\`o describes which invertible sheaves on $C$ are 
twisters. A description is also given in Section 6 of 
\cite{EM}, p.~288.

{\bf Throughout the article}, 
fix an algebraically closed field $K$ and a 
\emph{connected} 
curve $C$ over $K$. Denote by $C_1,\ldots,C_p$ its 
components, and fix a smoothing $\C/ B$ of $C$ with regular total 
space $\C$. 

\subsection{Jacobians and their compactifications}\label{sect2.2}

Fix an integer $d$. 
Since $C$ is a proper scheme over $K$, by \cite{BLR}, Thm.~8.2.3,
p.~211, there is a scheme, locally of finite type over $K$, parameterizing 
degree-$d$ line bundles on $C$; denote it by $J_C^d$. It decomposes as 
\begin{equation}\label{Jacobdecomp}
J^d_C=\underset{ d_1+\ldots+d_p=d}{\underset{\ul{d}=(d_1,\dots,d_p)}
{\coprod}}J^{\ul{d}}_C,
\end{equation}
where $J^{\ul{d}}_C$ is the connected component of $J_C^d$ parameterizing line 
bundles $\L$ such that 
$\deg(\L|_{C_i})=d_i$  for $i=1,\dots,p$. The $J^{\ul{d}}_C$ are 
quasiprojective varieties. 

The scheme $J_C^d$ is in a natural way an open dense subscheme of $\J_C^d$, 
the scheme over $K$ 
parameterizing torsion-free, rank-1, simple sheaves of degree $d$ 
on $C$; see \cite{E01} for the construction of $\J_C^d$ and its properties. 
(Recall that a coherent sheaf $\I$ on $C$ is 
\emph{torsion-free} if it has no embedded components, \emph{rank-1} if it has 
generic rank 1 at each component of $C$, and \emph{simple} if 
$\text{Hom}(\I,\I)=K$.) The scheme $\J_C^d$ is universally closed over $K$ 
but, in general, not separated and only locally of finite type. Moreover, in 
contrast to $J_C^d$, the scheme $\J_C^d$ is connected, hence not
easily decomposable. Thus, to deal with a 
manageable piece of it, we resort to polarizations. 

For our purposes, a 
\emph{polarization of degree $d$} is any $p$-tuple of rational numbers 
$\ul{e}=(e_1,\ldots,e_p)$ summing up to $d$. 
Fixing a polarization $\ul{e}$ of degree $d$, we say that a 
degree-$d$, torsion-free, rank-1 sheaf $\I$ on $C$ is \emph{semistable} if for 
every subcurve $Y\subseteq C$ we have
\begin{equation}\label{BI1}
\Big|\deg(\I_Y)-e_Y\Big|\leq \frac{k_Y}{2},
\end{equation}
where $\I_Y$ is the restriction of $\I$ to $Y$ modulo torsion, and 
$$
e_Y:=\sum_{C_i\subseteq Y}e_i.
$$
We say that $\I$ is \emph{stable} if all the 
inequalities in \eqref{BI1} are strict for proper subcurves $Y\subsetneqq C$. 
Furthermore, we say that $\I$ is \emph{$C_i$-quasistable}, for a 
component $C_i$ of $C$, if all the inequalities in \eqref{BI1} 
hold and, moreover,
\begin{equation}\label{BI2}
\deg(\I_Y)>e_Y-\frac{k_Y}{2}
\end{equation}
whenever $Y$ is a proper subcurve of $C$ containing $C_i$. 

The $C_i$-quasistable sheaves are simple, what can be easily proved using 
for instance \cite{E01}, Prop.~1, p.~3049. 
Their importance is that they form 
an open subscheme $\J^{d,i}_C$ of $\J^d_C$ that is projective over 
$K$. Furthermore, there is a projective 
scheme $\ol{\mathcal J}^{d,i}_{\C/B}$ over $B$ whose fiber over the special 
point of $B$ is 
(isomorphic to) $\J^{d,i}_C$ 
and whose fiber over a geometric point $b$ of $B$ over its generic point is 
(isomorphic to) $J^d_{C_b}$. 

Conditions \eqref{BI1} and \eqref{BI2} are purely numerical. In fact, 
let $\Gamma_C$ be the dual graph of $C$. A 
\emph{generalized multidegree} $\ul{d}$ 
is the assignment of an integer to each vertex and each 
edge of $\Gamma_C$, with the condition that to an edge only 0 or 1 is assigned. 
The (\emph{total}) \emph{degree} $d$ of $\ul{d}$ is the sum of all these 
assigned integers. A subcurve $Y$ of $C$ corresponds to a subgraph 
$\Gamma_Y$ of $\Gamma_C$, whose vertices are the components of 
$Y$ and whose edges are the singularities of $Y$. The \emph{degree of 
$\ul{d}$ on $Y$}, denoted $d_Y$, is 
the sum of all integers assigned to the vertices and edges of 
$\Gamma_Y$. 

Fixing a polarization $\ul{e}$ of degree $d$, we say that a 
generalized multidegree $\ul{d}$ of total degree $d$ 
is \emph{semistable} if for 
every proper subcurve $Y\subsetneqq C$ we have 
$$
\Big|d_Y-e_Y\Big|\leq \frac{k_Y}{2},
$$
and \emph{stable} if strict inequalities hold. 
Also, we say that $\ul{d}$ is \emph{$C_i$-quasistable}, for a 
component $C_i$ of $C$, if $\ul d$ is semistable and
$$
d_Y>e_Y-\frac{k_Y}{2}
$$
whenever $Y\supseteqq C_i$. 

To a degree-$d$ torsion-free, rank-1 sheaf $\I$ on $C$ there corresponds a 
generalized multidegree 
$\ul d$ of total degree $d$ 
that assigns 0 to each node of $C$ around which $\I$ is invertible, 
and 1 otherwise, and assigns $\deg(\I_{C_i})$ to each component 
$C_i$. Then $d_Y=\deg(\I_Y)$ for each subcurve $Y$ of $C$, and thus 
$\I$ is semistable, or stable, or $C_i$-quasistable 
if and only if $\ul d$ is. 

{\bf Throughout the article}, fix an integer $f$ and fix a degree-$f$ 
polarization 
$\ul e:=(e_1,\dots,e_p)$. Set 
$$
J:=J_C^f\cap\ol J_C^{f,1},\quad\ol J:=\ol J_C^{f,1},\quad
\ol{\mathcal J}:=\ol{\mathcal J}^{f,1}_{\C/B}.
$$ 

\subsection{Abel maps}\label{2.3}

Let $d$ be a positive integer, and $\P$ a line bundle on $C$ of degree 
$d+f$. If $C$ is smooth, its \emph{degree-$d$ Abel map} 
is the map $S^d(C)\to J$ induced from  
$$
\begin{array}{rcl}
\alpha^d\col C^{\times d}&\longrightarrow& J\\
(Q_1,\ldots,Q_d)&\mapsto&\P\ox\O_C(-Q_1-\ldots-Q_d),
\end{array}
$$
where $S^d(C)$ is the quotient of $C^{\times d}$ by the 
group of permutations 
of its factors.

(Often the map found in the literature is 
the one obtained from $\alpha^d$ by composing with the isomorphism 
$J^f_C\to J^{-f}_C$ taking a line bundle to its dual. 
Our choice does not differ from this one in any 
relevant way for the purposes of this article.)

The above definition of $\alpha^d$ does not make sense if $C$ is singular. 
Indeed, 
it would be natural to think of defining the degree-$d$ Abel map 
of $C$ by letting
$$
(Q_1,\ldots,Q_d)\mapsto\P\ox\I_{Q_1|C}\ox\cdots\ox\I_{Q_d|C},
$$
but two problems arise: First, the sheaf on the right might not be 
torsion-free; in fact, it will fail to be torsion-free if and only if two 
among the $Q_i$ are the same node of $C$. This is not a serious problem, as 
we can replace $C^{\times d}$, or rather $S^d(C)$, by the Hilbert scheme 
$\text{Hilb}^d_C$, parameterizing subschemes $F$ of length $d$ of $C$, and 
define the degree-$d$ Abel map by letting 
$$
F\mapsto \P\ox\I_{F|C}.
$$
This works if $C$ is irreducible; see \cite{AK}, (8.2), p.~101. 
But if $C$ is reducible, a second 
problem might nonetheless arise, namely that 
$\P\ox\I_{Q_1|C}\ox\cdots\ox\I_{Q_d|C}$ or $\P\ox\I_{F|C}$ might 
not be $C_1$-quasistable, whence not parameterized by $\ol J$. 

To solve the second problem, we resort to twisters arising from 
the smoothing $\C/B$. Associate to a twister 
$\O_C(Z)$ a multidegree $\underline d^Z$, that of the line bundle. 
Recall that 
the multidegree does not depend on the choice we made of the smoothing 
$\C/B$. Also, the smoothing being fixed, there is a bijective correspondence 
between twisters and their multidegrees.  

Now, let $\dot C$ denote the smooth locus of $C$. 
The product $\dot C^{\times d}$ 
decomposes as
$$
\dot C^{\times d}=\underset{1\leq i_1,\dots,i_d\leq p}
{\underset{\underline i=(i_1,\dots,i_d)\in\mathbb N^d}
{\coprod}}\dot C_{i_1}\times \dot C_{i_2}\times\cdots\times\dot C_{i_d},
$$
where $\dot C_j:=C_j\cap\dot C$ for each $j=1,\dots,p$. To each 
$d$-tuple $\ul i$ as above, we associate a 
multidegree $\ul f:=(f_1,\dots,f_p)$ of degree $f$, 
by letting
$$
f_j:=\deg(\P|_{C_j})-\#\{\ell\,|\,i_\ell=j\}
$$ 
for $j=1,\dots,p$, and to $\ul f$ a unique twister whose multidegree $\ul t$ 
is  such that $\ul f-\ul t$ is 
$C_1$-quasistable. Let $Z_{\ul i}$ be a formal sum of components of 
$C$ such that $\O_C(Z_{\ul i})$ has multidegree $\ul t$. Then there is a 
natural map 
$$
\begin{array}{rcl}
\dot{\alpha}^d_{\ul i}\col 
\dot C_{i_1}\times\cdots\times\dot C_{i_d}&\longrightarrow& J\\
(Q_1,\ldots,Q_d)&\mapsto&\P\ox\O_C(-Q_1-\dots-Q_d)\ox\O_C(-Z_{\ul i}).
\end{array}
$$
Putting together the $\dot{\alpha}^d_{\ul i}$, we obtain a 
map $\dot{\alpha}^d\:\dot C^{\times d}\to J$.

(The existence of the twister with multidegree $\ul t$ is the numerical version of 
\cite{E01}, Thm.~32, (4), p.~3068, applied to the restriction over the 
generic fiber of any extension to $\C$ of any line 
bundle on $C$ having multidegree $\ul f$. As for uniqueness, suppose 
there exist twisters of multidegrees $\ul t_1$ and $\ul t_2$
such that $\ul g:=\ul f-\ul t_1$ and $\ul h:=\ul f-\ul t_2$ are 
$C_1$-quasistable.  By contradiction, suppose $\ul t_1\ne \ul t_2$. Then 
$\ul b:=\ul g-\ul h$ is the multidegree of a nontrivial twister. 
Thus, as in the proof of \cite{C}, Lemma~4.1, p.~623, there exist
an integer $q\geq 1$ and a decomposition $C=Y_0\cup\dots\cup Y_q$ in subcurves 
$Y_j$ containing no components in common such that 
$\ul b=\sum_{j=1}^q\ell_j \ul b_j$, where $\ul b_j$
is the multidegree of $\O_C(Y_j)$ for $j=1,\dots,q$, and the $\ell_j$
are distinct positive integers. Set $Y:=Y_0$. Then 
$b_Y\geq k_Y$ and $b_{Y'}\le -k_Y$. From the semistability of $\ul g$ and 
$\ul h$, it follows that $g_{Y'}-e_{Y'}=-\frac{k_{Y'}}{2}$ and 
$h_Y-e_Y=-\frac{k_Y}{2}$. Thus, if $C_1\subseteq Y$ (respectively, if 
$C_1\not\subseteq Y$), then $\ul h$ (respectively, $\ul g$) 
is not $C_1$-quasistable, reaching a contradiction.) 

The map $\dot{\alpha}^d$ can be extended over the generic point of
$B$.
To do this, first we extend the line bundle $\P$ to one on $\C$. This is 
possible because $\P$ is associated to a Cartier divisor with 
support on $\dot C$, and $B\cong\text{Spec}(K[[t]])$. Abusing notation, 
$\P$ will also denote the extension. For each positive integer $j$, set 
$\C^j:=\C\times_B\cdots\times_B\C$, the fibered product of $j$ 
copies of $\C$ over $B$. Likewise, $\dot\C^j$ denotes the product 
of $j$ copies of $\dot\C$ over $B$, where $\dot\C$ denotes the smooth
locus of $\C/B$. 
Denote by  $\xi\:\C^{d+1}\to\C$ and 
$\rho_i:\C^{d+1}\rightarrow\C^2$ 
the projection onto the last factor and that onto the product over 
$B$ of the $i$-th and last 
factors, for $i=1,\dots,d$. 
Let $\Delta\subset\C^2$ be the diagonal subscheme, and put
$$
\Delta_{i}:=\rho_i^{-1}(\Delta)
$$
for each $i=1,\dots,d$. 

Since $\C$ is regular, the product 
$\dot C_{i_1}\times\dots\times\dot C_{i_d}\times C_{i_{d+1}}$ 
is a Cartier divisor of $\dot\C^d\times_B\C$ for every 
$i_1,\dots,i_{d+1}\in\{1,\dots,p\}$. 
So, for each $\ul i=(i_1,\dots,i_d)\in\{1,\dots,p\}^d$, if 
$Z_{\ul i}=\sum_ja_jC_j$, 
we have a well-defined Cartier divisor:
$$
\dot C_{i_1}\times\dots\times\dot C_{i_d}\times Z_{\ul i}:=
\sum_j a_j\dot C_{i_1}\times\dots\times\dot C_{i_d}\times C_j.
$$

Finally, we obtain a map 
\begin{equation}\label{dotalpha} 
\dot\alpha^d_{\C/B}\:\dot\C^d\lra\ol{\mathcal J}
\end{equation}
defined by the family of invertible sheaves on 
$\dot\C^d\times_B\C/\dot\C^d$ obtained as the tensor product of the 
``natural'' sheaf
$$
\Big(\xi^*\P\ox\I_{\Delta_1|\C^{d+1}}\ox\cdots\ox\I_{\Delta_d|\C^{d+1}}
\Big)\Big|_{\dot\C^d\times_B\C}
$$
with its ``correction:''
$$
\underset{1\leq i_1,\dots,i_d\leq p}
{\underset{\ul i=(i_1,\dots,i_d)\in\mathbb N^d}
{\bigotimes}}\O_{\dot\C^d\times_B\C}
(-\dot C_{i_1}\times\dots\times\dot C_{i_d}\times Z_{\ul i}).
$$
As the correction is by a Cartier divisor supported over $0\in B$, 
it follows that $\dot\alpha^d_{\C/B}$ restricts to the ``classical'' 
degree-$d$ Abel map on a smooth geometric fiber of $\C/B$.

We may ask whether $\dot\alpha^d_{\C/B}$, viewed as a rational map 
on $\C^d$, defined in codimension 2, extends to the whole $\C^d$. 
Surprisingly, the answer is yes for $d=1$, as shown in \cite{CE} and 
\cite{CCE}, at least if $\P=\O_C$ or $\P=\O_C(P)$ for $P\in\dot C_1$. 
However, as it can be expected, the answer is, in general, no for 
$d\geq 2$; see \cite{Co}. Moreover, as shown in \cite{Co} and as we will see, 
a resolution is not just a matter 
of blowing up along all the diagonals of $\C^d$, small and large. More 
blowups are often necessary. We will see ahead the case $d=2$.

{\bf Throughout the article}, fix an invertible sheaf $\P$ on $\C$ 
whose restriction to $C$ has degree $f+2$. 

\subsection{The flag scheme}\label{flag}

Recall that $\C^j$ is the fibered product of $j$ copies of $\C$ over $B$, for 
$j\in\mathbb N$. 
Let $\Delta\subset\C^2$ be the diagonal subscheme, and $\Delta_1$ and 
$\Delta_2$ the two ``large diagonals'' of 
$\C^3$, inverse images of $\Delta$ under the projections 
$\rho_1,\rho_2\:\C^3\to\C^2$, where $\rho_i$ is the projection onto the product 
over $B$ of the $i$-th and last factors of $\C^3$, for $i=1,2$.

Our first goal is to modify the base $\C^2$ to be able to replace 
$\I_{\Delta_1|\C^3}\ox\I_{\Delta_2|\C^3}$ by a relatively torsion-free, 
rank-1 sheaf 
over the base wherever the tensor product 
is not so, that is, over the pairs $(R,R)$, 
where $R$ is a node of $C$. This is done by replacing $\C^2$ by the 
\emph{flag scheme}:
$$
\text{\bf P}_{\C^2}(\I_{\Delta|\C^2}):=
\text{Proj}_{\C^2}(\S(\I_{\Delta|\C^2})),
$$
where $\S(\I_{\Delta|\C^2})$ is the sheaf of symmetric algebras of 
$\I_{\Delta|\C^2}$. In our case, another description is available.

\begin{Prop}\label{fl} 
Let $\phi\:\wt\C^2\to\C^2$ be the blowup of $\C^2$ along 
$\Delta$. Let $\rho_i:=p_i\phi$, where $p_i\:\C^2\to\C$ is the projection 
onto the $i$-th factor for $i=1,2$. Let $R\in C$. For $i=1,2$, let 
$X_i:=\rho^{-1}_i(R)$ and denote by $\mu_i\:X_i\to C$ the restriction of 
$\rho_{3-i}$ to $X_i$. Then the following statements hold:
\begin{enumerate}
\item $\wt\C^2$ is $\C^2$-isomorphic to $\text{\bf P}_{\C^2}(\I_{\Delta|\C^2})$.
\item $\rho_i$ is flat for $i=1,2$.
\item If $R$ is not a node of $C$ then $\mu_i$ is an isomorphism for $i=1,2$.
\item If $R$ is a node of $C$ then $X_i$ is $C$-isomorphic to $C_R$ and 
$\wt\C^2$ is regular along the rational component of $X_i$ contracted by 
$\mu_i$ for $i=1,2$.
\end{enumerate}
\end{Prop}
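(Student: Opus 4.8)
The plan is to reduce everything to an explicit computation in the completed local ring at a diagonal node, since away from such points the assertions are essentially trivial. At any point $(Q,Q)\in\Delta$ with $Q$ smooth the threefold $\C^2$ is regular and $\Delta$ is a smooth Cartier divisor, so $\phi$ is an isomorphism near $p_i^{-1}(Q)$; and since $\I_\Delta$ is invertible there, its symmetric and Rees algebras agree, so the flag scheme coincides with the blowup away from the diagonal nodes. The fibre $p_1^{-1}(R)=\{R\}\times_B C\cong C$ meets the blow-up locus $\{(R'',R''):R''\text{ a node}\}$ only at $(R,R)$, and not at all when $R$ is smooth. This already settles (3): for $R$ smooth, $\phi$ is an isomorphism near $p_i^{-1}(R)$, so $X_i\cong p_i^{-1}(R)\cong C$ and $\mu_i=\rho_{3-i}|_{X_i}$ is the projection isomorphism. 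Thus statements (1) and (4) are genuinely local at the points $(R,R)$ with $R$ a node.

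So fix a node $R$. Since $\C$ is regular with a node at $R$, locally $\C=\Spec K[[x,y]]$ with $t=xy$, whence $\wh\O_{\C^2,(R,R)}\cong A:=K[[x_1,y_1,x_2,y_2]]/(x_1y_1-x_2y_2)$, the ordinary double point, and $\Delta=V(u,v)$ with $u:=x_1-x_2$, $v:=y_1-y_2$. The computational heart is the pair of syzygies
$$
y_1\,u+x_2\,v=0,\qquad y_2\,u+x_1\,v=0,
$$
each obtained by expanding $x_1y_1-x_2y_2$. Blowing up $(u,v)$ and passing to the chart $v=uw$, the factor $u$ is a nonzerodivisor, so the syzygies give $y_1=-x_2w$ and $y_2=-x_1w$; substituting back, $x_1y_1-x_2y_2$ vanishes identically, so this chart is $\Spec K[[x_1,x_2,w]]$. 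Symmetrically the chart $u=vs$ is $\Spec K[[y_1,y_2,s]]$, glued to the first by $ws=1$. In particular $\wt\C^2$ is regular in a neighbourhood of $\phi^{-1}(R,R)$, which is the regularity assertion in (4).

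For (1) I would verify that $\I_\Delta$ is of linear type, i.e. that the canonical surjection $\S(\I_\Delta)\twoheadrightarrow\bigoplus_n\I_\Delta^n$ onto the Rees algebra is an isomorphism; this is the one delicate point, since $\I_\Delta=(u,v)$ has height $1$ with two generators, is not a complete intersection, and so linear type is not automatic. On the chart $v=uw$ the two syzygies contribute the relations $y_1+x_2w$ and $y_2+x_1w$ directly to the symmetric-algebra presentation, giving a surjection $K[[x_1,x_2,w]]=A[w]/(y_1+x_2w,\,y_2+x_1w)\twoheadrightarrow(\text{symmetric-algebra chart})$; the latter in turn surjects, via $\S\to\text{Rees}$, onto the blowup chart $K[[x_1,x_2,w]]$ computed above. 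The composite is a surjective $K$-endomorphism of a Noetherian local ring, hence an isomorphism, forcing both intermediate surjections to be isomorphisms; the same holds on the other chart. Therefore $\text{\bf P}_{\C^2}(\I_{\Delta|\C^2})\cong\wt\C^2$, proving (1). Statement (2) follows by miracle flatness: $\wt\C^2$ is Cohen--Macaulay (regular except at the pairs of distinct nodes, where it is the hypersurface $x_1y_1=x_2y_2$ left untouched by $\phi$), $\C$ is regular, and every fibre of $\rho_i$ is a curve (the generic fibre is $X$, the special fibres are $C$ or $C_R$), so the fibre dimension is constantly $\dim\wt\C^2-\dim\C=1$ and $\rho_i$ is flat; away from the exceptional locus this is just the flat base change $p_i$.

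Finally, for (4) I read off $X_1=\rho_1^{-1}(R)$ by setting $x_1=y_1=0$ in the charts. On the $w$-chart this is $V(x_1,x_2w)$, which splits into the $w$-axis $\{x_1=x_2=0\}$ and the branch $\{x_1=0,\ w=0\}$; the $w$-axis, glued to the $s$-axis on the other chart, is a $\IP^1_K$ over $(R,R)$ on which $\rho_2$ is constantly $R$, while the branch maps isomorphically under $\mu_1=\rho_2|_{X_1}$ to one branch of the node of $C$ in the second factor, the other branch arising on the $s$-chart. Since $\phi$ alters $p_1^{-1}(R)\cong C$ only over $(R,R)$, it follows that $X_1$ is the partial normalization of $C$ at $R$ with a single $\IP^1_K$ inserted at the two preimages of $R$, that is $X_1\cong C_R$ over $C$, with $\mu_1=\mu_R$ contracting precisely that $\IP^1_K$, along which $\wt\C^2$ is regular by the second paragraph. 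The case $i=2$ is identical after exchanging the factors. The main obstacle is the linear-type verification in (1); once the two charts are in hand, the remaining statements are bookkeeping on them.
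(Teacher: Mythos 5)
Your proof is correct and follows essentially the same route as the paper's: pass to the completed local ring at a diagonal node $(R,R)$, use the two syzygies $y_1u+x_2v=0$, $y_2u+x_1v=0$ of $(u,v)$ to compute the blowup charts (the paper's subscheme $Y$), compare the symmetric and Rees algebras for (1), and read off the fibers in the charts for (3)--(4). The only deviations are cosmetic: you obtain (2) by miracle flatness (Cohen--Macaulay source, regular target, equidimensional one-dimensional fibers) where the paper checks flatness of the explicit chart map $Y\to\IA^2_K$ directly (or cites \cite{EGK}), and your linear-type verification spells out in detail what the paper merely asserts when it identifies both the blowup and $\text{\bf P}_{\C^2}(\I_{\Delta|\C^2})$ with $Y$.
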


\begin{proof} As the Rees algebra of an ideal is a quotient of the 
symmetric algebra, $\wt\C^2$ is naturally a subscheme of 
$\text{\bf P}_{\C^2}(\I_{\Delta|\C^2})$. To show 
the blowup is the whole flag scheme is a local matter that need only be checked at 
the points where $\Delta$ fails to be Cartier, that is, at the pairs 
$(R,R)$, where $R$ is a node of $C$. Recall that $\C$ is regular. Thus 
the completion of the 
local ring of $\C$ at $R$ is (isomorphic to) 
the quotient of the formal power series ring 
$K[[t,x_0,x_1]]$ by the ideal $(t-x_0x_1)$. That of the 
local ring of $\C^2$ at $(R,R)$ is the quotient of the formal power series ring 
$K[[t,x_0,x_1,y_0,y_1]]$ by the ideal $(t-x_0x_1,t-y_0y_1)$, with $\Delta$ 
corresponding to the ideal $(x_0-y_0,x_1-y_1)$. We need to compare the 
symmetric and Rees algebras of this ideal. We 
are reduced to considering the cone $V$ in $\IA^4_K$, with 
coordinates $x_0,x_1,y_0,y_1$, given by $x_0x_1=y_0y_1$, and showing that 
its blowup along the subcone given by the ideal 
$I:=(x_0-y_0,x_1-y_1)\subset K[V]$ 
is equal to $\IP_V(I)$. Both are indeed equal to the 
subscheme $Y\subset\IA^4_K\times\IP^1_K$ given by 
$$
\left\{\begin{array}{c}
\alpha_1x_0=\alpha_0 y_1\\
\alpha_1y_0=\alpha_0 x_1\\
\end{array}\right.
$$
where $\alpha_0,\alpha_1$ are homogeneous coordinates of $\IP^1_K$. Statement 1 
is proved.

The remaining statements follow from the above description. Indeed, 
Statement 3 follows from the fact that $\phi$ fails to be an isomorphism 
only above pairs $(R,R)$ where $R$ is a node 
of $C$. Also, to prove Statement 2 we need only observe 
that the map $Y\to\IA^2_K$ sending 
$(x_0,x_1,y_0,y_1;\alpha_0\:\alpha_1)$ to $(x_0,x_1)$ if $i=1$ and $(y_0,y_1)$ if 
$i=2$ is flat.

(Alternatively, in more generality, flatness follows 
from \cite{EGK}, Lemma~3.4, p.~600, as the proof given there 
applies not only to families of integral curves but more generally 
to families of reduced curves with surficial singularities.)

Finally, setting $y_0=y_1=0$ in $Y$, we 
obtain the union of three lines in $\IA^4_K\times\IP^1_K$,
$$
\begin{array}{cc}
X_0:&y_0=y_1=x_0=\alpha_0=0\\
E:&y_0=y_1=x_0=x_1=0\\
X_1:&y_0=y_1=x_1=\alpha_1=0,
\end{array}
$$
forming a chain: $X_0$ and $X_1$ intersect $E$ transversally at a single 
point, distinct for each, but do not intersect. Setting $x_0=x_1=0$ in $Y$, 
an analogous statement holds. Statement 4 follows from this 
description and the smoothness of $Y$.
\end{proof}

There is a natural subscheme $F_2\subset\wt\C^2\times_B\C$ which is flat of 
relative length 2 over $\wt\C^2$ and contains 
both the inverse images of $\Delta_1$ and $\Delta_2$ under the natural 
map $\wt\C^2\times_B\C\to\C^3$. 
More precisely, let $\rho_1$ and $\rho_2$ be the compositions of the structure map 
$\phi\:\wt\C^2\to\C^2$ with the projections onto the first and second factors 
of $\C^2$. The structure map itself can be written in different ways:
$$
\phi=(\rho_1,\rho_2)=(\rho_1\times 1)\circ(1, \rho_2)=\sigma\circ(\rho_2,\rho_1)=
\sigma\circ(\rho_2\times 1)\circ(1, \rho_1),
$$
where $\sigma$ is the switch involution of $\C^2$. Notice that
$$
\I_{(\phi\times 1)^{-1}\Delta_1|\wt\C^2\times_B\C}=(\rho_1\times 1)^*\I_{\Delta}
\quad\text{and}\quad
\I_{(\phi\times 1)^{-1}\Delta_2|\wt\C^2\times_B\C}=(\rho_2\times 1)^*\I_{\Delta}.
$$
Pulling back the first (resp. second) sheaf above to $\wt\C^2$ under 
$(1,\rho_2)$ (resp.~under $(1,\rho_1)$), 
and pushing forward under the same map, 
we get natural surjections:
$$
(\rho_1\times 1)^*\I_{\Delta}\lra(1,\rho_2)_*\phi^*\I_{\Delta|\C^2}\quad
\text{and}\quad
(\rho_2\times 1)^*\I_{\Delta}\lra(1,\rho_1)_*(\sigma\phi)^*\I_{\Delta|\C^2}.
$$
If $\epsilon\:\phi^*\I_{\Delta|\C^2}\to\O_{\wt\C^2}(1)$ is the 
tautological surjection, 
we obtain by composition two surjections,
$$
\begin{array}{c}
\I_{(\phi\times 1)^{-1}\Delta_1|\wt\C^2\times_B\C}\lra(1,\rho_2)_*\O_{\wt\C^2}(1),\\
\I_{(\phi\times 1)^{-1}\Delta_2|\wt\C^2\times_B\C}\lra(1,\rho_1)_*\O_{\wt\C^2}(1).
\end{array}
$$
(Here we use that $\sigma^*\I_{\Delta|\C^2}=\I_{\Delta|\C^2}$.) Their kernels are 
equal to the sheaf of ideals of $F_2$; see \cite{Kmult}, Section 2, p.~109, 
for proof.

\section{Blowups: local analysis}\label{3}

Recall the notation: $C$ is a curve with 
components $C_1,\ldots,C_p$ defined over $K$, 
an algebraically closed field, $\dot C$ is the smooth locus of $C$, 
and $\C/B$ is a smoothing 
of $C$ with regular total space, the smooth locus of which is $\dot\C$. 

Recall that 
$\Delta\subset\C^2$ is the diagonal subscheme, and that 
$\Delta_1:=\rho_1^{-1}(\Delta)$ and $\Delta_2:=\rho_2^{-1}(\Delta)$, 
where $\rho_i\:\C^3\to\C^2$ is the projection onto the product 
over $B$ of the $i$-th and last factors of $\C^3$ for $i=1,2$.

\subsection{The double product}\label{blbase}

Let $R$ and $S$ be reducible nodes of $C$. 
Let $C_i$ and $C_j$ be the distinct components containing $R$, and $C_k$ and 
$C_l$ those containing $S$.
Let $x_0=0$ and $x_1=0$ be local equations for the Cartier 
divisors $C_i$ and $C_j$ of $\C$ at $R$, respectively, and $y_0=0$ and $y_1=0$ 
local equations for $C_k$ and $C_l$ at $S$, respectively. 
If $R=S$, we assume $i=k$ and $j=l$, and $x_u=y_u$ for $u=0,1$.

Recall that  $B=\text{Spec}(K[[t]])$. We 
may assume that the completion of the local ring of $\C$ at $R$ is the 
quotient of $K[[t,x_0,x_1]]$ by $(x_0x_1-t)$, and that 
of $\C$ at $S$ is the quotient of $K[[t,y_0,y_1]]$ by $(y_0y_1-t)$. 
Thus the threefold $\C^2$ is described locally analytically at $(R,S)$ as 
the spectrum of the 
quotient of $K[[t,x_0,x_1,y_0,y_1]]$ by the ideal of the subscheme 
given by the equations
$$
\left\{\begin{array}{c}
x_0x_1=t,\\y_0y_1=t,
\end{array}\right.
$$
and so it can be seen as the cone in $\IA^4_K$, with coordinates 
$x_0,x_1,y_0,y_1$, given by $x_0x_1=y_0y_1$.

Thus $\C^2$ has a quadratic isolated singularity at $(R,S)$. This singularity 
can be resolved by blowing up $\C^2$ at $(R,S)$, at the cost of 
replacing the point by a quadric surface. However, the next proposition 
describes resolutions of this singularity that are more efficient 
and, more important, better for our purposes.

If $R=S$, the threefold $\C^2$ is given analytically at $(R,S)$ in the same 
way as above, and 
the diagonal is given by the equations $x_0-y_0=x_1-y_1=0$. 

\begin{Prop}\label{lemabasis} Let $R$ and $S$ be reducible nodes of $C$. Assume 
$R\in C_i\cap C_j$ and $S\in C_k\cap C_l$, for integers $i,j,k,l$ with 
$i\neq j$ and $k\neq l$. 
If $R=S$, assume $i=k$ and $j=l$. 
Let $\phi\col\wt\C^2\ra\C^2$ denote the 
blowup of $\C^2$ along $C_i\times C_l$, or along the diagonal if $R=S$. 
Put $E:=\phi^{-1}(R,S)$. Then the following statements hold:
\begin{enumerate}
\item $E$ is a smooth rational curve and $\wt\C^2$ is regular in a 
neighborhood of $E$. 
\item The strict transforms of $C_i\times C_l$ 
and $C_j\times C_k$ contain $E$, while those of 
$C_i\times C_k$ and $C_j\times C_l$ intersect $E$ transversally at 
a unique point, distinct for each transform.
\item If $R=S$, the strict transform of the diagonal contains $E$.
\item The composition 
$\wt\C^2\to \C$ of $\phi$ with the projection of $\C^2$ onto any of its 
factors is flat.
\end{enumerate}
\end{Prop}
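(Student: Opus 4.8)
The plan is to reduce all four assertions to the local analytic model at $(R,S)$ and to resolve the singularity there by an explicit coordinate computation, exactly as in Proposition \ref{fl}. Away from $(R,S)$ the center of the blowup is Cartier, so $\phi$ is an isomorphism over a punctured neighborhood of $(R,S)$ and every statement is local at that point. There $\C^2$ is the affine cone $V\subset\IA^4_K$ given by $x_0x_1=y_0y_1$; the four surfaces $C_i\times C_k$, $C_i\times C_l$, $C_j\times C_k$, $C_j\times C_l$ are the Weil divisors $V(x_0,y_0)$, $V(x_0,y_1)$, $V(x_1,y_0)$, $V(x_1,y_1)$, the center $C_i\times C_l$ is $V(x_0,y_1)$, and (when $R=S$) the diagonal is $V(x_0-y_0,x_1-y_1)$.

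First I would write the blowup of $V$ along $(x_0,y_1)$ in the two standard charts coming from the exceptional relation $[\alpha_0:\alpha_1]=[x_0:y_1]$. On $\alpha_0\neq0$, setting $s=\alpha_1/\alpha_0$, one gets $\Spec K[x_0,y_0,s]$ with $y_1=sx_0$ and $x_1=sy_0$; on $\alpha_1\neq0$, setting $u=\alpha_0/\alpha_1$, one gets $\Spec K[x_1,y_1,u]$ with $x_0=uy_1$ and $y_0=ux_1$; the two are glued by $s=1/u$. Both charts are copies of $\IA^3_K$, so $\wt\C^2$ is regular along $E:=\phi^{-1}(R,S)$, which is $\{x_0=y_0=0\}$ in the first chart and $\{x_1=y_1=0\}$ in the second, gluing to $\IP^1_K$. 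This proves Statement 1; the essential point to extract is that $\phi$ is a small resolution of the conifold, an isomorphism off the vertex contracting only $E$. For the case $R=S$ the same two charts arise from the flag-scheme description in Proposition \ref{fl}, which already furnishes Statement 1.

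Next, for Statement 2 I would read the strict transforms off the charts. In the first chart the transforms of $C_i\times C_l=V(x_0,y_1)$ and $C_j\times C_k=V(x_1,y_0)$ are the surfaces $\{x_0=0\}$ and $\{y_0=0\}$, both of which contain $E$; the transform of $C_j\times C_l=V(x_1,y_1)$ is $\{s=0\}$, meeting $E$ only at $s=0$. In the second chart the transform of $C_i\times C_k=V(x_0,y_0)$ is $\{u=0\}$, meeting $E$ only at $u=0$. Since $s=0$ and $u=0$ are the points $0$ and $\infty$ of $E\cong\IP^1_K$, the two intersection points are distinct, and because each transform is a coordinate hyperplane and $E$ a coordinate axis in a smooth $\IA^3_K$, both intersections are transverse; this is Statement 2. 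Geometrically, $C_i\times C_l$ and $C_j\times C_k$ form one ruling of the quadric cone, separated by the small resolution, while $C_i\times C_k$ and $C_j\times C_l$ lie in the transverse ruling. Statement 3 then follows from the same first chart: when $R=S$ the strict transform of the diagonal is $\{x_0=y_0\}$, which visibly contains $E=\{x_0=y_0=0\}$.

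Finally, Statement 4 I would obtain from miracle flatness: by Statement 1 the source $\wt\C^2$ is regular, hence Cohen--Macaulay, the factor $\C$ is regular of dimension $2$, and the fibers of the composite $\wt\C^2\to\C$ are everywhere one-dimensional, since they coincide with the fibers of the flat projection $\C^2\to\C$ except over the node, where $\phi$ inserts only the curve $E$; equidimensional fibers over a regular base with Cohen--Macaulay total space force flatness. (Alternatively one checks flatness chartwise, each chart ring being a domain of finite type and torsion-free over $\O_\C$, as in the second proof of Statement 2 of Proposition \ref{fl} via \cite{EGK}.) I expect Statement 2 to be the main obstacle: the surprise is that blowing up a non-Cartier codimension-one center yields a small rather than a divisorial resolution, and one must then track that two of the four surfaces have strict transforms visible only in the complementary chart, which is precisely what separates ``contains $E$'' from ``meets $E$ transversally at one point.''
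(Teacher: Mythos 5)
Your reduction to the completed local model and the chart computations for Statements 1--3 are essentially identical to the paper's proof: the paper also passes to the cone $x_0x_1=y_0y_1$ in $\IA^4_K$, writes the blowup as the subscheme $\alpha'x_0=\alpha y_1$, $\alpha'y_0=\alpha x_1$ of $\IA^4_K\times\IP^1_K$, reads off that the transforms of $C_i\times C_l$ and $C_j\times C_k$ contain $E$ while those of $C_i\times C_k$ and $C_j\times C_l$ are $\{\alpha=0\}$ and $\{\alpha'=0\}$, meeting $E$ transversally at the two distinct points, and handles $R=S$ by observing (via Proposition \ref{fl}) that the diagonal blowup has the same equations. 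Your charts, the small-resolution observation, and the point that two of the four transforms live in only one chart all match the paper's computation.

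The one place you genuinely deviate is Statement 4, and there your argument as written contains an error, though a repairable one. Statement 1 gives regularity of $\wt\C^2$ only in a neighborhood of $E$: at any pair $(R',S')$ of reducible nodes of $C$ outside $(C_i\cap C'_i)\times(C_l\cap C'_l)$ (and at pairs involving irreducible nodes), the center $C_i\times C_l$ is Cartier, $\phi$ is an isomorphism there, and $\wt\C^2$ retains the conical singularity of $\C^2$; so ``the source $\wt\C^2$ is regular'' is false globally whenever $C$ has other nodes. The fix is cheap, because flatness is local: either apply miracle flatness only near the exceptional curves, where the same local model gives regularity, and note that away from them $\phi$ is an isomorphism and $p_1\:\C^2\to\C$ is flat by base change from $\C\to B$; or replace ``regular'' by ``Cohen--Macaulay,'' which $\wt\C^2$ is everywhere (it is locally a hypersurface wherever $\phi$ is an isomorphism), after which your miracle-flatness argument goes through verbatim. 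The paper instead verifies flatness of the explicit chart maps $(u,v,w)\mapsto(uw,v)$ etc. Separately, your parenthetical alternative---each chart ring being ``torsion-free over $\O_\C$''---does not suffice: torsion-free does not imply flat over a two-dimensional regular base (the maximal ideal of $K[[x_0,x_1]]$ is torsion-free and not flat), so that route must lean entirely on the cited lemma of \cite{EGK}, as in Proposition \ref{fl}, not on torsion-freeness.
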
 

\begin{proof} Keep the notation prior to the statement of the 
proposition. Assume first that $\phi$ is the blowup along 
$C_i\times C_l$, whether $R=S$ or not. 
Here are the equations at $(R,S)$ of the products 
listed:
$$
\begin{array}{cc}
C_i\times C_k\colon&x_0=y_0=0\\
C_i\times C_l\colon&x_0=y_1=0\\
C_j\times C_k\colon&x_1=y_0=0\\
C_j\times C_l\colon&x_1=y_1=0
\end{array}
$$
To prove the statements of the lemma, we may pass to the completion of the 
local ring of $\C^2$ at $(R,S)$. So we may change $\C^2$ for 
the cone $Z$ in $\IA^4_K$, with coordinates $x_0,x_1,y_0,y_1$, given by 
$x_0x_1=y_0y_1$. 

The blowup is the subscheme $Y\subset\IA^4_K\times\IP^1_K$ given 
by 
$$
\left\{\begin{array}{c}
\alpha'x_0=\alpha y_1\\
\alpha'y_0=\alpha x_1\\
\end{array}\right.
$$
where $\alpha,\alpha'$ are homogeneous coordinates of $\IP^1_K$. The 
inverse image $E$ of the vertex of the cone in the blowup $Y\to Z$ 
is given 
by the equations $x_0=x_1=y_0=y_1=0$; thus $E$ is a smooth rational curve.

The strict transform of ${C_i\times C_l}$ is given by $x_0=0$ where 
$\alpha\neq0$ and by $y_1=0$ where $\alpha'\neq0$; the strict transform of 
${C_j\times C_k}$ is given by $y_0=0$ where $\alpha\neq0$ and by $x_1=0$ 
where $\alpha'\neq0$. Thus both transforms contain $E$.

On the other hand, the transform of ${C_i\times C_k}$ is given by $\alpha=0$, 
whence intersects $E$ transversally at a certain point. Analogously, 
the transform of ${C_j\times C_l}$ is given by $\alpha'=0$, whence intersects 
$E$ transversally at another point.

Where $\alpha'\neq0$, setting $u=\alpha/\alpha'$, $v=x_1$ and 
$w=y_1$, the blowup is given by the equations $x_0=uw$ and $y_0=uv$. 
On 
the other hand, where $\alpha\neq0$, setting $u=\alpha'/\alpha$, 
$v=y_0$ and $w=x_0$, the blowup is given by $x_1=uv$ and $y_1=uw$.
In any case, we have that the blowup is isomorphic to $\IA^3_K$, with 
coordinates $u$, $v$ and $w$, thus nonsingular. Furthermore, $\wt\C^2\to\C$ 
corresponds to the map $\IA^3_K\to\IA^2_K$ taking $(u,v,w)$ to $(uw,v)$ or 
$(uv,w)$ in case $\alpha'\neq 0$ and to $(w,uv)$ or $(v,uw)$ 
in case $\alpha\neq 0$. So, $\wt\C^2\to\C$ is flat on a neighborhood of $E$, 
and thus everywhere. 

As for the second case, the blowup along the diagonal, the equations of the 
blowup are the same as above, and 
thus the same results hold. Also, since $E$ is given by 
$x_0=x_1=y_0=y_1=0$, it follows that $E$ is contained in the 
strict transform of the diagonal.
\end{proof}

The same proposition 
can be used, {\it mutatis mutandis}, to describe the blowup 
along $C_j\times C_k$ (same results) and those along $C_i\times C_k$ and 
$C_j\times C_l$. Also, the results of the lemma are the same if we blow up 
along a product of 
two subcurves $X\times Y$, as long as 
$C_i\subseteq X$ but $C_j\not\subseteq X$, and $C_l\subseteq Y$ but 
$C_k\not\subseteq Y$.

\subsection{The triple product}\label{blowup3}

Keep the notation of Subsection \ref{blbase}. Let $T$ be another reducible 
node of $C$, not necessarily distinct from $R$ and $S$. Let $C_m$ and 
$C_n$ be the distinct components of $C$ containing $T$. Let $z_0=0$ and 
$z_1=0$ be local equations for the Cartier divisors $C_m$ and $C_n$ of 
$\C$ at $T$, respectively. If $T=R$ (resp. $T=S$) we set $z_u:=x_u$ 
(resp. $z_u:=y_u$) for $u=0,1$.

Let $\wt\C^2$ be the blowup of $\C^2$ along $C_i\times C_l$. Let $E$ be the 
``exceptional'' rational smooth curve on $\wt\C^2$ 
over $(R,S)$. Let $A$ (resp. $A'$) 
be the intersection of the strict transform 
of $C_i\times C_k$ (resp. $C_j\times C_l$) with $E$. Recall the 
proof of Proposition~\ref{lemabasis}. The strict transform of 
$C_i\times C_l$ is given at $A$ by the equation $y_1=0$ and at 
$A'$ by the equation $x_0=0$. That of ${C_j\times C_k}$ is 
given at $A$ by $x_1=0$ and at $A'$ by $y_0=0$. Both contain $E$. 
The divisor given by $t=0$ contains these strict transforms. Its 
residue is the strict transform of ${C_i\times C_k}$ at $A$, and the 
transform of ${C_j\times C_l}$ at $A'$. Thus $t=x_1y_1\alpha$ at $A$, with 
$\alpha=0$ being an equation for ${C_i\times C_k}$, and $t=x_0y_0\alpha'$ at 
$A'$, with $\alpha'=0$ being an equation for ${C_j\times C_l}$. Schematically,
\begin{equation}\label{typeI}
\begin{array}{c}
\begin{array}{c} u=\alpha\\ v=x_1\\w=y_1\end{array}
\quad\text{and}\quad
\begin{array}{l}
 x_0=uw\\ y_0=uv\end{array}\\
\mbox{(at $A$)}
\end{array}
\quad
\left|
\quad
\begin{array}{c}
\begin{array}{c} u=\alpha'\\ v=y_0\\w=x_0\end{array}
\quad\text{and}\quad
\begin{array}{l}
 x_1=uv\\ y_1=uw\end{array}\\
\mbox{(at $A'$)}
\end{array}\right.
\end{equation}

\bigskip 

\[
\begin{xy} <20pt,0pt>:
(0,0)*{\bullet}="a"; 
(0,-2)*{}="b";
(1.3,1.3)*{}="c";
(-2,0)*{}="d";
"a"+0;"b"+0**\dir{-};
"a"+0;"c"+0**\dir{-};
"a"+0;"d"+0**\dir{-};
"a"+(-1,-1)*{u=0}; 
"a"+(1,-0.5)*{v=0};
"a"+(-0.6,1)*{w=0};
(6,-0.8)*{\bullet}="f"; 
"f"+0;"f"+(2,0)**\dir{-};
"f"+0;"f"+(0,2)**\dir{-};
"f"+(-1.3,-1.3);"f"**\dir{-};
"f"+(1,1)*{u=0}; 
"f"+(0.5,-1)*{v=0};
"f"+(-1.3,0.5)*{w=0};
"f"+(-3,-2)*{\text{\bf Figure 1. } \text{Blowup at $(R,S)$}};
"f"+(-6.1,1.1)*{A};
"f"+(0.3,0.4)*{A'};
\end{xy}
\]
 
To summarize, at $A$ (resp. $A'$), we have the equation $t=uvw$, where $u=0$ 
is a local equation of the transform of $C_i\times C_k$ (resp. $C_j\times C_l$), 
and where $v=0$ and 
$w=0$ are local equations of the transforms of $C_j\times C_k$ and 
$C_i\times C_l$ respectively. The functions $u,v,w$ form a regular system of 
parameters of $\wt\C^2$ at $A$ (resp. $A'$). 

In case $R=S$, and $\wt\C^2$ is the blowup of $\C^2$ along the diagonal, 
exactly the same summary applies. In either case, the strict transform of 
the diagonal, 
whose equations on $\C^2$ are $x_0-y_0=x_1-y_1=0$, is given 
by $v-w=0$.

Thus, the following equations hold on $\wt\C^2\times_B\C$ at $(A,T)$ or 
$(A',T)$:
$$\left\{\begin{array}{c}
uvw=t\\z_0z_1=t
\end{array}\right.$$
Passing to the completion of the local ring of 
$\widetilde\C^2\times_B\C$ at $(A,T)$ or $(A',T)$, we may view the fourfold 
as the hypersurface of $\IA^5_K$, 
with coordinates 
$u,v,w,z_0,z_1$ given by 
$z_0z_1=uvw$. In particular, we see that $\widetilde\C^2\times_B\C$ is 
singular at $(A,T)$ and $(A',T)$;   in fact, it is singular all along 
$E\times\{T\}$.

In case $R=T$, the diagonal $\Delta_1$ passes through $(R,S,T)$. It is 
given at the point by the equations $z_0-x_0=z_1-x_1=0$. Its inverse image 
in $\wt\C^2\times_B\C$ is given at $(A,T)$ by $z_0-uw=z_1-v=0$ and at 
$(A',T)$ by $z_0-w=z_1-uv=0$. Likewise, if $S=T$, the diagonal $\Delta_2$ 
passes through $(R,S,T)$, given at the point by the equations 
$z_0-y_0=z_1-y_1=0$; its inverse image 
in $\wt\C^2\times_B\C$ is given at $(A,T)$ by $z_0-uv=z_1-w=0$ and at 
$(A',T)$ by $z_0-v=z_1-uw=0$. 

To resolve the singularities of $\wt\C^2\times_B\C$ along $E\times\{T\}$, we 
will use two blowups. The next lemma describes the effect of the two 
blowups in the local 
analytic setup of the hypersurface of $\IA^5_K$ described above.

\begin{Lem}\label{propblows}
Let $Y$ be the hypersuface of $\IA^5_K$, with coordinates $a,b,c,z_.,z_{..}$, 
given by $z_.z_{..}=abc$. Let $p\:Y\to \IA^3_K$ be the restriction of 
the projection
$$
(a,b,c,z_.,z_{..})\mapsto(a,b,c).
$$
Let $X:=p^{-1}(0)$. Let  $X_.$ and 
$X_{..}$ be the components of $X$ defined respectively by $z_.=0$ and $z_{..}=0$.
Let $\psi_1\col Y_1\ra Y$ be the blowup of\,\, $Y$ along the subscheme given 
by $z_.=a=0$ and $\psi_2\col Y_2\ra Y_1$ that of $Y_1$ along the 
strict transform 
of the subscheme given by $z_{..}=b=0$. Set $\rho:=p\psi_1\psi_2$ and 
$\widetilde X:=\rho^{-1}(0)$. Then:
\begin{enumerate}
\item $Y_2$ is smooth, $\rho$ is flat, and 
$\widetilde X$ is a nodal curve with four components, two, $\wt X_.$ and 
$\wt X_{..}$, isomorphic to 
$X_.$ and $X_{..}$ under $\psi_1\psi_2$, 
but not intersecting each other, and (a chain of) 
two rational, smooth, 
projective components $E_1$ and $E_2$ intersecting each other 
at a single point, the first meeting $\wt X_{..}$ transversally at a 
single point away from the second, but not meeting 
$\wt X_.$, the second meeting $\wt X_.$ transversally at a single point away 
from the first, but not meeting $\wt X_{..}$; see Figure 2.
\item Let $\lambda\:\IA^1_K\to\IA^3_K$ be the map sending $t$ to $(t,t,t)$, 
and form the Cartesian diagram:
$$
\begin{CD}
W_2 @>\lambda_2>> Y_2\\
@V\rho_\lambda VV @V\rho VV\\
\IA_K^1 @>\lambda >> \IA^3_K.
\end{CD}
$$ 
Then $W_2$ is a smooth surface. In addition, set 
$$
\wh X_.:=\lambda_2^{-1}(\wt X_.),\quad\wh X_{..}:=\lambda_2^{-1}(\wt X_{..}),\quad
\wh E_1:=\lambda_2^{-1}(E_1),\quad\wh E_2:=\lambda_2^{-1}(E_2).
$$
Then these inverse images are prime Cartier divisors of $W_2$ summing up 
to $\rho_{\lambda}^{-1}(0)$. Furthermore, 
the strict transform to $Y_2$ of the subscheme of $Y$ given by the 
equations heading each column of the following two tables is a Cartier 
divisor, and its pullback to $W_2$ is a still a Cartier divisor, a sum of 
$\wh X_.$, $\wh X_{..}$, $\wh E_1$ and $\wh E_2$ 
with the corresponding multiplicities 
specified in the first four entries of each column, plus, in the case of the 
second table only, a $\rho_\lambda$-flat prime divisor intersecting the fiber 
$\rho_{\lambda}^{-1}(0)$ transversally on the component specified in the fifth 
row:
\begin{center}
\begin{tabular}{|c||c|c|c|c|c|c|c|}
\hline 
      &$a=0$   &$b=0$   &$c=0$   &$a=0$   &$b=0$   &$c=0$  \\
      &$z_.=0$ &$z_.=0$ &$z_.=0$ &$z_{..}=0$ &$z_{..}=0$ &$z_{..}=0$\\\hline\hline
$\wh X_.$ &$1$ &$1$ &$1$ &$0$ &$0$ &$0$ \\\hline
$\wh X_{..}$ &$0$ &$0$ &$0$ &$1$ &$1$ &$1$ \\\hline
$\wh E_1$ & $1$   &  $0$   & $0$    & $0$    &  $1$   & $1$           \\\hline
$\wh E_2$ & $1$    &  $0$   & $1$   &  $0$   &  $1$  &  $0$           \\\hline
\end{tabular}
\end{center}
\begin{center}
\begin{tabular}{|c||c|c|c|c|c|c|c|}
\hline 
           &  $z_.=a$  &  $z_.=b$  &   $z_.=c$  & $z_.=ab$ & $z_.=ac$  & $z_.=bc$
\\
           &$z_{..}=bc$ &$z_{..}=ac$ &$z_{..}=ab$ &$z_{..}=c$ &$z_{..}=b$ &$z_{..}=a$
\\\hline
\hline
$\wh X_.$  &    $0$     &    $0$     &    $0$     &   $0$     &   $0$     &  $0$
\\\hline
$\wh X_{..}$&    $0$     &    $0$     &    $0$     &   $0$     &   $0$     &  $0$
\\\hline
$\wh E_1$      &    $0$     &    $1$     &    $1$     &   $0$     &   $0$     &  $1$
\\\hline
$\wh E_2$      &    $0$     &    $1$     &    $0$     &   $1$     &   $0$     &  $1$
\\\hline
           &   $\wh E_1$   &   $\wh E_1$   &   $\wh E_1$    &  $\wh E_2$   &  $\wh E_2$   &  $\wh E_2$
\\\hline
\end{tabular}
\end{center}
\end{enumerate}
\end{Lem}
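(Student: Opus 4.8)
The plan is to prove both statements by explicit computation in affine charts, exploiting that every scheme in sight is a toric-type hypersurface and that the two blowups are isomorphisms in codimension one. First I would produce charts for $Y_1=\psi_1^{-1}(Y)$. Blowing up the ideal $(z_.,a)$ introduces a $\IP^1$; in the chart where $a=\tau z_.$ the strict transform is cut out by $z_{..}=\tau bc$, hence is isomorphic to $\IA^4_K$ with coordinates $b,c,z_.,\tau$ and is already smooth, while in the chart where $z_.=\sigma a$ one finds the residual quadric $\sigma z_{..}=bc$, still singular along a line. I would then blow up the strict transform of $(z_{..},b)$: in the first chart this center is the Cartier divisor $\{b=0\}$, so $\psi_2$ is an isomorphism there, whereas in the second chart the two sub-charts $z_{..}=wb$ and $b=v z_{..}$ produce $\{c=sw\}\cong\IA^4_K$ with coordinates $a,b,s,w$ and $\{s=vc\}\cong\IA^4_K$ with coordinates $a,c,z_{..},v$. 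These three charts cover $Y_2$ and each is isomorphic to $\IA^4_K$, which proves $Y_2$ is smooth; recording the composite maps to $Y$ in each chart (namely $a=\tau z_.,\ z_{..}=\tau bc$; $z_.=sa,\ z_{..}=wb,\ c=sw$; and $b=v z_{..},\ z_.=vca,\ s=vc$) is the bookkeeping backbone for everything that follows. Flatness of $\rho$ then follows because a morphism from a Cohen--Macaulay scheme to a regular scheme with equidimensional one-dimensional fibers is flat, and a direct look at the three charts shows every fiber of $\rho$ is one-dimensional.

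Next I would compute $\wt X=\rho^{-1}(0)$, i.e. the locus $a=b=c=0$, in each of the three charts; in each it is a union of two coordinate lines. Matching them across the overlaps, the transition being $\tau=1/s$ between the first two charts and $v=1/w$ between the last two, glues the pieces into a chain $\wt X_{..}-E_1-E_2-\wt X_.$: the line carrying $z_.$ is $\wt X_{..}$, the line carrying $z_{..}$ is $\wt X_.$, and the two intermediate lines are the rational curves $E_1$ and $E_2$, which meet at the origin of the chart $\IA^4_K$ with coordinates $a,b,s,w$. Reading off the local equations shows each intersection is a node and that $E_1$ meets only $\wt X_{..}$ while $E_2$ meets only $\wt X_.$, which is Statement 1.

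For Statement 2 I would impose $a=b=c=t$ on the three charts, obtaining three copies of $\IA^2_K$ with coordinates $(z_.,\tau)$, $(s,w)$ and $(z_{..},v)$, in which $t=\tau z_.=sw=v z_{..}$ respectively; hence $W_2$ is a smooth surface, and $\rho_\lambda^{-1}(0)$ is in each chart the vanishing of the product of the two coordinates, exhibiting $\wh X_.,\wh X_{..},\wh E_1,\wh E_2$ as prime Cartier divisors summing to the fiber. Finally, for each column of the two tables I would take the listed subscheme $D\subset Y$, which is a codimension-one Weil divisor (a component of $\text{div}(z_.)$ or $\text{div}(z_{..})$ for the first table, and a section of $p$ for the second), compute its strict transform $\wt D\subset Y_2$ chart by chart as the reduced preimage away from the centers, and then restrict the resulting local equation to $W_2$. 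On $W_2$ each such equation becomes a monomial in the chart coordinates, possibly times one extra factor; the exponents of the coordinates give the multiplicities along $\wh X_.,\wh X_{..},\wh E_1,\wh E_2$, while the extra factor (present exactly for the second table) cuts out the asserted $\rho_\lambda$-flat prime divisor, whose meeting point with $\rho_\lambda^{-1}(0)$ lies on the component forced by the residual factor. Checking consistency over the chart overlaps pins down the global multiplicities and fills in both tables.

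The only genuinely delicate point is that, since $(z_.,a)$ and $(z_{..},b)$ are codimension-one and generically principal on the hypersurface $Y$, both $\psi_1$ and $\psi_2$ are isomorphisms in codimension one: there is no exceptional divisor in the naive sense, and instead the strict transform of a center becomes a Cartier divisor that may coincide with part of the special fiber, which is exactly what happens for columns $1$ and $5$. Keeping strict transforms and total transforms apart, and correctly gluing the four components of $\wt X$ across the three charts, is where care is needed; the remainder is routine monomial arithmetic.
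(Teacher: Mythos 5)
Your proposal is correct and follows essentially the same route as the paper: your three affine charts (with coordinates $(b,c,z_.,\tau)$, $(a,b,s,w)$, $(a,c,z_{..},v)$) are exactly the affine pieces of the paper's homogeneous-coordinate description of $Y_1$ and $Y_2$ via $\gamma_1,\gamma_1',\gamma_2,\gamma_2'$, and your chart-by-chart gluing of the chain $\wt X_{..}-E_1-E_2-\wt X_.$, the restriction to $W_2$ with $t=\tau z_.=sw=vz_{..}$, and the monomial bookkeeping for the strict transforms reproduce the paper's equation tables (spot-checks, e.g.\ the columns $a=z_.=0$ and $z_.=ab,\,z_{..}=c$, confirm the claimed multiplicities and the extra flat factor). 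The only cosmetic differences are your use of miracle flatness where the paper reads flatness directly off the chart equations, and your explicit remark that $\psi_1,\psi_2$ are isomorphisms in codimension one so that strict transforms of the centers themselves are well-defined Cartier divisors, a point the paper handles implicitly in its tables.
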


\begin{proof}
By construction of the blowup, $Y_1$ is the subscheme of $Y\times\IP^1_K$ 
given in $\IA^5_K\times\IP^1_K$ by 
\begin{equation}\label{Y1}
\left\{\begin{array}{c}
\gamma_1'z_.=\gamma_1 a\\
\gamma_1'bc=\gamma_1 z_{..}
\end{array}\right.
\end{equation}
where $\gamma_1,\gamma_1'$ are homogeneous coordinates of $\IP^1_K$. 
Set $E_1=\psi_1^{-1}(0)$, thus given by the equations $a=b=c=z_.=z_{..}=0$. 
Clearly, $E_1$ is mapped isomorphically to $\IP^1_K$ under the second 
projection. Let 
$T_1$ and $T'_1$ be the points on $E_1$ given by $\gamma_1=0$ and $\gamma'_1=0$.
Note that $Y_1$ is isomorphic to $\IA^4_K$ and $p\psi_1$ is flat 
where $\gamma_1\neq0$. 
On the other hand, 
where $\gamma'_1\ne 0$, we see that $Y_1$ is singular along the points 
given by $z_.=z_{..}=b=c=\gamma_1=0$. In particular, $Y_1$ is singular at 
$T_1$. The strict transform of $X_.$ is given by $a=b=c=z_.=\gamma_1=0$ 
and intersects $E_1$ at $T_1$, while that of $X_{..}$ is given 
by $a=b=c=z_{..}=\gamma'_1=0$ and intersects $E_1$ transversally at $T'_1$.

Now, $Y_2$ is the blowup of $Y_1$ along a subscheme of codimension 1, whence 
$Y_2$ is isomorphic to $Y_1$ where $Y_1$ is smooth, in particular, where 
$\gamma_1\neq 0$. Over $\gamma'_1\neq 0$, 
setting $\gamma'_1:=1$, the blowup is the subscheme of 
$Y_1\times\IP^1$ given as a subset of $\IA^5_K\times\IA^1_K\times\IP^1_K$ by:
\begin{equation}\label{Y2}
 \left\{\begin{array}{c}
z_.=\gamma_1a\\
\gamma_2'z_{..}=\gamma_2 b\\
\gamma_2'c=\gamma_2 \gamma_1
\end{array}\right.
\end{equation}
where $\gamma_2,\gamma_2'$ are homogeneous coordinates of $\IP^1_K$. 
Notice that 
$Y_2$ is smooth and $\rho$ is flat. Let $E_2:=\psi_2^{-1}(T_1)$, thus given by 
the equations $a=b=c=z_.=z_{..}=\gamma_1=0$. 
Clearly, $E_2$ is mapped isomorphically 
to $\IP^1_K$ under the third projection. 
Let $T_2$ and $T'_2$ be the points on $E_2$ given by $\gamma_2=0$ and 
$\gamma'_2=0$. 
Abusing notation, we still denote by $E_1$ the strict transform of $E_1$ to 
$Y_2$ via $\psi_2$: It is given where $\gamma'_1\neq 0$ 
by the equations $a=b=c=z_.=z_{..}=\gamma_2=0$. 
We thus see that the ``new'' $E_1$ intersects $E_2$ 
transversally at $T_2$, which is the point over $T_1$ on the ``old'' $E_1$. 
The strict 
transform $\wt X_.$ of $X_.$ lies where $\gamma'_1\neq 0$, and is given by 
$a=b=c=z_.=\gamma_1=\gamma'_2=0$, and thus intersects $E_2$ transversally 
at $T'_2$. 

So, the proof of Statement 1 is complete, the curve $\wt X$ being 
depicted in Figure 2 below:
\[
\begin{xy} <30pt,0pt>:
(0,0)*{\bullet}="a"; 
(1.3,1.3)*{}="b";
(1.3,-1.3)*{}="c";
(3.5,1)*{}="d";
(3.5,-1)*{}="e";
"a"+(-0.2,-0.2);"b"+0**\dir{-};
"a"+(-0.2,0.2);"c"+0**\dir{-};
"b"+(-0.8,-0.3);"d"+0**\dir{-};
"c"+(-0.8,0.3);"e"+0**\dir{-};
"a"+(0.2,0.6)*{E_1}; 
"a"+(0.2,-0.5)*{E_2}; 
"d"+(0.4,0)*{\wt X_{..}}; 
"e"+(0.4,0)*{\wt X_.}; 
"b"+(-0.3,-0.3)*{\bullet};
"b"+(-0.3,-2.3)*{\bullet};
"e"+(-2.4,0.2)*{T'_2};
"e"+(-2.8,1)*{T_1=T_2};
"e"+(-2.3,1.8)*{T'_1};
"e"+(-1.2,-0.6)*{\text{\bf Figure 2. }\text{The curve $\wt X$}}
\end{xy}
\]

\smallskip

To prove Statement 2, we consider the covering of $Y_2$ by three open 
subschemes 
isomorphic to $\IA^4_K$, that isomorphic to the open subscheme 
of $Y_1$ given by 
$\gamma_1\neq 0$, and those given by $ \gamma'_1\gamma_2\neq 0$ and 
$\gamma'_1\gamma'_2\neq 0$. The following two tables list on each entry the 
equation(s) on the open subscheme of $Y_2$ given by the inequality heading the 
row of the strict transform of the subscheme of $Y$ given by the 
equations heading the column:
\begin{center}
\begin{tabular}{|c||c|c|c|c|c|c|c|}
\hline 
&$a=0$&$b=0$         &$c=0$&$a=0$&  $b=0$  & $c=0$ 
\\
& $z_.=0$ & $z_.=0$ & $z_.=0$ & $z_{..}=0$    & $z_{..}=0$ & $z_{..}=0$
\\\hline
$\gamma_1\neq 0$&$z_.=0$&$1=0$& $1=0$  & $\gamma'_1=0$ &   $b=0$   &   $c=0$
\\\hline
$\gamma'_1\gamma_2\neq 0$ &$a=0$&$\gamma'_2=0$&$c=0$&$1=0$&$z_{..}=0$&$1=0$
\\\hline
$\gamma'_1\gamma'_2\neq 0$&$a=0$&$1=0$  &$\gamma_1=0$&$1=0$&  $b=0$ &$\gamma_2=0$
\\\hline
\end{tabular}
\end{center}
\begin{center}
\begin{tabular}{|c||c|c|c|c|c|c|c|}
\hline 
&  $z_.=a$    &  $z_.=b$    & $z_.=c$   &    $z_.=ab$    &   $z_.=ac$  & $z_.=bc$
\\
& $z_{..}=bc$  & $z_{..}=ac$ & $z_{..}=ab$&   $z_{..}=c$    &   $z_{..}=b$ &$z_{..}=a$
\\\hline
$\gamma_1\neq 0$&$\gamma'_1=\gamma_1$&$z_.=b$&$z_.=c$&$\gamma'_1b=\gamma_1$&
$\gamma'_1c=\gamma_1$&$z_.=bc$
\\\hline
$\gamma'_1\gamma_2\neq 0$&$\gamma'_1=\gamma_1$& $z_{..}=ac$& $\gamma'_2a=\gamma_2$ & $z_{..}=c$ & $\gamma'_2=\gamma_2$ & $z_{..}=a$\\\hline
$\gamma'_1\gamma'_2\neq 0$ &$\gamma'_1=\gamma_1$& $z_.=b$& $\gamma'_2a=\gamma_2$ 
& $\gamma'_1b=\gamma_1$ & $\gamma'_2=\gamma_2$ & $z_{..}=a$\\\hline
\end{tabular}
\end{center}
In particular, the strict transforms to $Y_2$ 
of the subschemes of $Y$ given by the 
equations heading all the columns above are Cartier divisors. 

Recall the equations 
defining $E_1$, $E_2$, $\wt X_.$ and $\wt X_{..}$:
\begin{equation}\label{EEXX}
\left\{\begin{array}{c}
E_1\:    a=b=c=z_.=z_{..}=\gamma_2=0\\
E_2\:     a=b=c=z_.=z_{..}=\gamma_1=0\\
\wt X_.\: a=b=c=z_.=\gamma_1=\gamma'_2=0\\
\wt X_{..}\: a=b=c=z_{..}=\gamma'_1=0 
\end{array}\right.
\end{equation}
It must be noted that the equations for $\wt X_{..}$ are taken in 
$\IA^5_K\times\IP^1_K$, whence consist of one less equation 
than for the other three, which are 
taken in $\IA^5_K\times\IA^1_K\times\IP^1_K$, assuming $\gamma'_1\neq 0$. 
Of these other three, $E_1$ is the only curve that has a point with 
$\gamma'_1=0$: the image of $E_1$ in $Y_1$ is given by $a=b=c=z_.=z_{..}=0$.

Now, where $\gamma_1\neq 0$, thus setting $\gamma_1:=1$, viewing 
$Y_1$, and thus $Y_2$, as the subscheme of $\IA^6_K$ given by 
Equations \eqref{Y1}, it follows that $W_2$ is the subscheme of $\IA^4_K$ 
given by the equations
\begin{equation}\label{W1}
\left\{\begin{array}{c}
\gamma_1'z_.=t\\
\gamma_1't^2=z_{..}.
\end{array}\right.
\end{equation}
It follows that $W_2$ is smooth, isomorphic to $\IA^2_K$ with coordinates 
$\gamma'_1,z_.$. On the other hand, 
if $\gamma'_1\neq 0$, 
we may view $Y_2$ as the subscheme of $\IA^7_K$ given by Equations \eqref{Y2}, 
setting $\gamma_2:=1$ or $\gamma'_2:=1$. It follows that $W_2$ is the 
subscheme of $\IA^5_K$ given by
\begin{equation}\label{W2}
\left\{\begin{array}{c}
z_.=\gamma_1t\\
t=\gamma'_2z_{..}\\
\gamma_1=\gamma'_2t
\end{array}\right.\quad\text{and}\quad
\left\{\begin{array}{c}
z_.=\gamma_1t\\
z_{..}=\gamma_2t\\
t=\gamma_2\gamma_1,
\end{array}\right.
\end{equation}
the first set of equations where $\gamma_2\neq 0$ and the second where 
$\gamma'_2\neq 0$. So $W_2$ is smooth, isomorphic to $\IA^2_K$ with 
coordinates $\gamma'_2,z_{..}$ in the first case and $\gamma_1,\gamma_2$ in 
the second.

The following first 
three tables list the restrictions to $W_2$ of the equations 
listed in the last two tables, whereas the fourth lists on each entry 
the equation(s) on the open subscheme of $W_2$ given by the inequality 
heading the row of the intersection with $W_2$ of the subscheme of $Y_2$ 
heading the column:
\begin{center}
\begin{tabular}{|c||c|c|c|c|c|c|c|}
\hline 
&$a=0$&$b=0$         &$c=0$&$a=0$&  $b=0$  & $c=0$ 
\\
& $z_.=0$ & $z_.=0$ & $z_.=0$ & $z_{..}=0$    & $z_{..}=0$ & $z_{..}=0$
\\\hline
$\gamma_1\neq 0$&$z_.=0$&$1=0$&$1=0$&$\gamma'_1=0$&$\gamma'_1z_.=0$&
$\gamma'_1z_.=0$
\\\hline
$\gamma'_1\gamma_2\neq 0$ &$\gamma'_2z_{..}=0$&$\gamma'_2=0$&$\gamma'_2z_{..}=0$&
$1=0$&$z_{..}=0$&$1=0$
\\\hline
$\gamma'_1\gamma'_2\neq 0$&$\gamma_1\gamma_2=0$&$1=0$&$\gamma_1=0$&$1=0$&
$\gamma_1\gamma_2=0$ &$\gamma_2=0$
\\\hline
\end{tabular}
\end{center}
\begin{center}
\begin{tabular}{|c||c|c|c|}
\hline 
&  $z_.=a$    &  $z_.=b$    & $z_.=c$  
\\
& $z_{..}=bc$  & $z_{..}=ac$ & $z_{..}=ab$
\\\hline
$\gamma_1\neq 0$&$\gamma'_1=1$&$z_.=\gamma'_1z_.$&$z_.=\gamma'_1z_.$
\\\hline
$\gamma'_1\gamma_2\neq 0$&$1=(\gamma'_2)^2z_{..}$& $z_{..}=(\gamma'_2)^2z_{..}^2$&
$(\gamma'_2)^2z_{..}=1$
\\\hline
$\gamma'_1\gamma'_2\neq 0$ &$1=\gamma_1$& $\gamma_1^2\gamma_2=\gamma_1\gamma_2$&
$\gamma_1\gamma_2=\gamma_2$
\\\hline
\end{tabular}
\end{center}
\begin{center}
\begin{tabular}{|c||c|c|c|c|c|c|c|}
\hline 
&  $z_.=ab$    &   $z_.=ac$  & $z_.=bc$
\\
&  $z_{..}=c$    &   $z_{..}=b$ &$z_{..}=a$
\\\hline
$\gamma_1\neq 0$&
$(\gamma'_1)^2z_.=1$&$(\gamma'_1)^2z_.=1$&$z_.=(\gamma'_1)^2z_.^2$
\\\hline
$\gamma'_1\gamma_2\neq 0$&$z_{..}=\gamma'_2z_{..}$&$\gamma'_2=1$&
$z_{..}=\gamma'_2z_{..}$
\\\hline
$\gamma'_1\gamma'_2\neq 0$ &$\gamma_1\gamma_2=\gamma_1$ & $1=\gamma_2$ & 
$\gamma_1\gamma_2^2=\gamma_1\gamma_2$
\\\hline
\end{tabular}
\end{center}
\begin{center}
\begin{tabular}{|c||c|c|c|c|}
\hline 
&  $\wt X_.$  &  $\wt X_{..}$  &  $E_1$  &  $E_2$
\\\hline
$\gamma_1\neq 0$&$1=0$&$\gamma'_1=0$&$z_.=0$&$1=0$
\\\hline
$\gamma'_1\gamma_2\neq 0$ &$\gamma'_2=0$&$1=0$&$1=0$&$z_{..}=0$
\\\hline
$\gamma'_1\gamma'_2\neq 0$&$1=0$&$1=0$&$\gamma_2=0$&$\gamma_1=0$
\\\hline
\end{tabular}
\end{center}
So $\wh X_.$, $\wh X_{..}$, $\wh E_1$ and $\wh E_2$ are indeed prime Cartier 
divisors of $W_2$, and the intersections with $W_2$ of the strict transforms to 
$Y_2$ of the subschemes of $Y$ given by the equations heading the columns of 
the above first three tables are Cartier. The two tables in Statement 2 follow 
from the above four tables by comparing the equations in the entries.
\end{proof}

Observe that the strict transform to $Y_1$ of the subscheme of $Y$ defined by 
$z_{..}=a=0$ is a Cartier divisor, given by $\gamma'_1=0$. On the other hand, 
the strict transform to $Y_1$ of the subscheme defined by 
$z_.=b=0$ is given by $\gamma_1=b=0$ and that of the subscheme defined 
by $z_.=c=0$ is given by $\gamma_1=c=0$. Thus the blowups of $Y_1$ along the 
strict transform of the subscheme given by $z_.=b=0$ and along that of the 
subscheme given by $z_{..}=c=0$ are equal. And so are the blowups along 
the transform of the subscheme given by $z_.=c=0$ and along that of the 
subscheme given by $z_{..}=b=0$. Thus the same proposition can be used, 
\emph{mutatis mutandis}, to describe all ``possible'' blowups.

\section{Blowups: global analysis}\label{4}

Recall that $C$ is a curve with components $C_1,\ldots,C_p$ defined over $K$, 
an algebraically closed field, $\dot C$ is the smooth locus of $C$, 
and $\C/B$ is a regular smoothing of $C$ with regular total space, the 
smooth locus of which is $\dot\C$. 

Recall that 
$\Delta\subset\C^2$ is the diagonal subscheme, and that 
$\Delta_1:=\rho_1^{-1}(\Delta)$ and $\Delta_2:=\rho_2^{-1}(\Delta)$, 
where $\rho_i\:\C^3\to\C^2$ is the projection onto the product 
over $B$ of the $i$-th and last factors of $\C^3$ for $i=1,2$.

\subsection{The double product} 

We will use the symbol $[X,Y]$ to describe the blowup of 
(a modification by blowups of) $\C^2$ along (the strict transform of) 
$X\times Y$, for proper subcurves $X,Y\subseteq C$. If $X$ and $Y$ are 
components, $X=C_i$ and $Y=C_l$, we shall use the shorter notation $[i,l]$ 
for $[C_i,C_l]$. A 
modification of $\C^2$ by such blowups can thus be described as a sequence 
\begin{equation}\label{b2}
[\Delta],[X_1,Y_1],[X_2,Y_2],\dots,[X_u,Y_u],
\end{equation}
where $[\Delta]$ stands for blowup along the diagonal. By 
Proposition \ref{fl}, the 
blowup along the diagonal adds a projective line on top of 
each pair $(R,R)$ of identical nodes, whereas by Proposition \ref{lemabasis} 
the blowup $[X_i,Y_i]$ 
adds a projective line on top of each pair 
$(R,S)\in(X_i\cap X'_i)\times(Y_i\cap Y'_i)$, as long as a projective line 
has not been created before, in a preceding blowup, over the same pair. We 
say the projective lines created are \emph{exceptional}. On the 
exceptional line over each pair $(R,S)$ of reducible nodes there are two 
\emph{distinguished points}, each being on the intersection of the 
strict transforms of three distinct 
products of components of $C$ containing $R$ and $S$.

By considering a sequence of blowups long and varied enough, always starting 
with $[\Delta]$, we obtain a 
scheme $\wt\C^2$ which is nonsingular away from the points over pairs 
$(R,S)$ of distinct nodes $R$ and $S$ of $C$ where either $R$ or $S$ is 
irreducible. At any rate, if this is the case, since the product 
$X\times Y$ of subcurves 
$X$ and $Y$ of $C$ is Cartier away from $(X\cap X')\times(Y\cap Y')$, its 
strict transform to $\wt\C^2$ is Cartier. 
If this is achieved by the sequence 
of blowups, that is, if the strict transforms of all the products   $X\times Y$ of subcurves $X$ and $Y$ of $C$ are Cartier, we call $\wt\C^2$ a \emph{good partial desingularization} of 
$\C^2$.  

\begin{Exa}\label{exac2} Assume $C$ consists of four components 
$C_1,C_2,C_3,C_4$ such that 
$$
\begin{array}{cc}
C_1\cap C_2=&\{N_1\},\\
C_1\cap C_3=&\{N_2\},\\
C_1\cap C_4=&\emptyset,\\
C_2\cap C_3=&\{N_3,N_4,N_5\},\\
C_2\cap C_4=&\{N_6\},\\
C_3\cap C_4=&\{N_7,N_8\}.
\end{array}
$$ 
In addition, suppose $C_2,C_3,C_4$ are smooth but $C_1$ has a node $N_0$. 
The first blowup in the sequence
$$
[\Delta],[4,4],[2,2],[1,2],[2,1],[2,4],[4,2],[1,4],[4,1]
$$
adds a $\IP^1_K$ over each pair $(N_i,N_i)$ for $i=0,\dots,8$, the second 
blowup adds a $\IP^1_K$ over each pair $(N_i,N_j)$ for distinct $i,j$ such that 
$6\leq i,j\leq 8$, the third adds a $\IP^1_K$ over each pair 
$(N_i,N_j)$ for distinct $i,j$ such that $i,j=1,3,4,5,6$, the fourth 
adds a $\IP^1_K$ over each pair $(N_2,N_j)$ for $j=1,3,4,5,6$, the fifth 
adds a $\IP^1_K$ over each pair $(N_i,N_2)$ for $i=1,3,4,5,6$, the sixth 
adds a $\IP^1_K$ over each pair $(N_i,N_j)$ with $i=1,3,4,5$ and $j=7,8$, the 
seventh adds a $\IP^1_K$ over each pair $(N_i,N_j)$ with $i=7,8$ and 
$j=1,3,4,5$, the eigth adds a $\IP^1_K$ over $(N_2,N_7)$ and $(N_2,N_8)$, while 
the last adds a $\IP^1_K$ over $(N_7,N_2)$ and $(N_8,N_2)$. 

The above sequence desingularizes $\C^2$ at all points not of the 
form $(N_0,N_i)$ or $(N_i,N_0)$ for $i=1,\dots,8$. The strict transforms of 
the diagonal and all the products $C_i\times C_j$ are Cartier divisors in the 
resulting scheme. We have obtained a good partial desingularization 
of $\C^2$. Notice that the resulting scheme would be different if 
we had exchanged the position of $[1,4]$ with that of $[1,2]$ in the 
sequence of blowups. As it is, 
the resulting $\IP^1_K$ over $(N_2,N_6)$ is contained in the strict 
transform of $C_1\times C_2$, whereas with the exchange the $\IP^1_K$ would 
intersect the transform transversally. The new resulting scheme would 
nonetheless still be a good partial desingularization 
of $\C^2$.
\end{Exa}

Recall that $\N(C)$ is the collection of reducible nodes of $C$. 

\begin{Def} For each $i,k\in\{1,\dots,p\}$, let $\N_{i,k}(C)$ 
denote the subset of $\N(C)^2$ containing pairs of nodes $(R,S)$ such that 
$R\in C_i$ and $S\in C_k$. Let $\phi\:\wt\C^2\to\C^2$ be a  good 
partial desingularization. Denote by $\N_{i,k}(\phi)$ the subset of 
$\N_{i,k}(C)$ containing the pairs $(R,S)$ such that $\phi^{-1}(R,S)$ lies 
on the strict transform of $C_i\times C_k$. 
\end{Def}

\begin{Prop} Let $\phi\:\wt\C^2\to\C^2$ be a good  
partial desingularization given by 
$$
[\Delta],[X_1,Y_1],[X_2,Y_2],\dots,[X_u,Y_u].
$$
Let $i,k\in\{1,\dots,p\}$ and $R,S\in C$. 
Then $(R,S)\in\N_{i,k}(\phi)$ if and only if $R$ and $S$ are reducible nodes 
of $C$ with $R\in C_i$ and $S\in C_k$ and either of the following two 
statements hold:
\begin{enumerate}
\item $R=S$ and $i\neq k$.
\item $R\neq S$ and the first integer $v$ such that $R\in X_v\cap X'_v$ and 
$S\in Y_v\cap Y'_v$ satisfies either $C_i\subseteq X_v$ and $C_k\subseteq Y_v$ 
or $C_i\not\subseteq X_v$ and $C_k\not\subseteq Y_v$.
\end{enumerate}
\end{Prop}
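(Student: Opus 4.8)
The plan is to decide the incidence of $\phi^{-1}(R,S)$ with the strict transform of $C_i\times C_k$ at the single blowup in the sequence that \emph{first} creates a curve over $(R,S)$, and then to show that no other blowup disturbs it. Since $\N_{i,k}(\phi)\subseteq\N_{i,k}(C)\subseteq\N(C)^2$, membership already forces $R$ and $S$ to be reducible nodes with $R\in C_i$ and $S\in C_k$; so, writing $R\in C_i\cap C_j$ and $S\in C_k\cap C_l$, the only thing to prove is that for such a pair the fibre $\phi^{-1}(R,S)$ is \emph{contained in} the strict transform of $C_i\times C_k$ exactly under condition (1) or (2).

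The key local input is the Cartier criterion implicit in Subsection \ref{blbase}: in the model $\{x_0x_1=y_0y_1\}$ for $\C^2$ at $(R,S)$, a product $X\times Y$ fails to be Cartier precisely when $R\in X\cap X'$ and $S\in Y\cap Y'$ (so that $X$ separates the branches at $R$ and $Y$ those at $S$), and is principal otherwise. I would use this as follows. When $R\neq S$, the blowup $[\Delta]$ is an isomorphism over $(R,S)$, and for each $w$ below the first index $v$ with $R\in X_w\cap X_w'$ and $S\in Y_w\cap Y_w'$ the centre $X_w\times Y_w$ (whose strict transform still coincides with it near $(R,S)$) is Cartier there, so $[X_w,Y_w]$ is an isomorphism over $(R,S)$; thus nothing happens over $(R,S)$ before step $v$. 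Such a $v$ exists because a good partial desingularization must both resolve the singularity of $\C^2$ at $(R,S)$ and make $C_i\times C_k$ Cartier, and neither is possible without a separating blowup. At step $v$, Proposition \ref{lemabasis} produces a single curve $E\cong\IP^1_K$ over $(R,S)$ and shows $\wt\C^2$ is regular near $E$; when $R=S$ the same is achieved at the very first step $[\Delta]$ by Proposition \ref{fl}(4). After $E$ is created, every subsequent centre is a divisor in a scheme that is regular near $E$, hence Cartier near $E$, so each later blowup is an isomorphism on a neighborhood of $E$. Therefore $\phi^{-1}(R,S)=E$, and the relation ``$E$ is contained in the strict transform of $C_i\times C_k$'' may be read at step $v$ (or at step $[\Delta]$) rather than at the end.

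It then remains to read this incidence off the creating blowup. For $R\neq S$, Proposition \ref{lemabasis}(2) and the remark following it say that the strict transform of $C_a\times C_b$ (with $C_a\in\{C_i,C_j\}$ and $C_b\in\{C_k,C_l\}$) contains $E$ exactly when $C_a$ and $C_b$ lie on the same side of the centre, i.e.\ when $C_a\subseteq X_v$ and $C_b\subseteq Y_v$, or $C_a\subseteq X_v'$ and $C_b\subseteq Y_v'$; taking $C_a=C_i$, $C_b=C_k$ and using that each branch lies in exactly one of $X_v,X_v'$, this is precisely condition (2). For $R=S$, the branches through $R$ are $C_i$ and $C_j$, and $C_k$ is one of them; Proposition \ref{lemabasis}(2)--(3) (equivalently Proposition \ref{fl}) gives that $E$ lies in the strict transform of $C_i\times C_k$ iff $k\neq i$, the transform of $C_i\times C_i$ only meeting $E$ transversally, which is condition (1). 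The main obstacle is the middle step: establishing that $E$ is born at one determined blowup and is left intact both by the earlier blowups over $(R,S)$ and by the later ones in a neighborhood of $E$. Once the Cartier criterion and the regularity assertions of Propositions \ref{fl} and \ref{lemabasis} are secured, the final incidence reading is immediate.
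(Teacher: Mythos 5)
Your proof is correct and follows exactly the route the paper intends: the paper's entire proof is the one line ``Follows easily from Proposition \ref{lemabasis},'' and your argument is precisely the detailed verification of that claim, combining the Cartier criterion for products at a pair of reducible nodes with the incidence and regularity statements of Propositions \ref{fl} and \ref{lemabasis} (and the remark following the latter) to show the exceptional line is created at one determined step and untouched before and after. In particular, your key observation that products of subcurves are Weil divisors in the threefold $\C^2$, hence Cartier wherever the ambient scheme is regular, so that all later blowups are isomorphisms near $E$, is exactly the content implicit in the paper's appeal to Proposition \ref{lemabasis}.
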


\begin{proof} Follows easily from Proposition \ref{lemabasis}.
\end{proof}

Recall that $C_R$ is the curve obtained from $C$ 
by replacing the node $R$ by a 
smooth rational curve, and that $C(1)$ is the one obtained by 
replacing each reducible node of $C$ by a smooth rational curve; see 
Subsection \ref{preli}.

\begin{Prop}\label{lemabasis2}
Let $\phi\:\wt\C^2\to\C^2$ be a good partial 
desingularization. Let 
$\rho\:\wt\C^2\to\C$ denote its composition with 
the first projection $p_1\:\C^2\to\C$. 
Let $R\in C$ and $X:=\rho^{-1}(R)$. Let 
$\mu\:X\to C$ be the 
restriction to $X$ of $\phi$ composed with the second projection 
$p_2\:\C^2\to\C$. Then the following 
statements hold:
\begin{enumerate}
\item $\rho$ is flat.
\item $\wt\C^2$ is regular along each smooth rational curve of $X$ 
contracted by $\mu$.
\item If $R$ is not a node of $C$ then $\mu$ is an isomorphism.
\item If $R$ is an irreducible node of $C$ then $X$ is 
$C$-isomorphic to $C_R$. 
\item If $R$ is a reducible node of $C$ then $X$ is $C$-isomorphic to $C(1)$.
\end{enumerate}
Furthermore, for each $i,k\in\{1,\dots,p\}$, let 
$D_{i,k}$ denote the strict transform to $\wt\C^2$ 
of $C_i\times C_k$. Let
$$
D=\sum_{i,k}w_{i,k}D_{i,k}
$$
for given integers $w_{i,k}$. Then, if $R$ is a reducible node of $X$, 
the restriction $\O_{\wt\C^2}(D)|_X$ is a twister of $X$. More specifically, 
for each $i=1,\dots,p$, let $\wh C_i$ be the 
strict transform to $X$ of $C_i$ under $\mu$, and for each reducible node 
$S$ of $C$, let $E_S:=\mu^{-1}(S)$. Then 
$$
\O_{\wt\C^2}(D)|_X\cong\O_X(\sum_{k=1}^p a_k\wh C_k+\sum_{S\in\N(C)}b_SE_S),
$$
where $a_k:=\sum_iw_{i,k}$, the sum over the two
$i$ such that $R\in C_i$, and $b_S:=\sum_{i,k}w_{i,k}$, the sum over
the two pairs $(i,k)$ such that $(R,S)\in\N_{i,k}(\phi)$.
\end{Prop}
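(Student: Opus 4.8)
The plan is to read Statements 1--5 off the local pictures already established, and to treat the ``Furthermore'' part by intersection theory on $\wt\C^2$, using two structural identities to circumvent the fact that $X$ shares components with the divisors $D_{i,k}$.

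First I would localize around the fibre $X=\rho^{-1}(R)$ and track which blowups in the defining sequence $[\Delta],[X_1,Y_1],\dots,[X_u,Y_u]$ actually touch it. A blowup $[X_v,Y_v]$ modifies $\C^2$ only over pairs $(R',S')\in(X_v\cap X'_v)\times(Y_v\cap Y'_v)$ of reducible nodes, and $[\Delta]$ only over pairs $(R',R')$ with $R'$ a node; hence the fibre over $R$ is untouched unless $R$ is a node, which is Statement 3. If $R$ is a node, the only blowups affecting the fibre are $[\Delta]$ (over $(R,R)$) and, when $R$ is reducible, one blowup per reducible node $S$ inserting an exceptional $\IP^1$ over $(R,S)$; Propositions \ref{fl} and \ref{lemabasis} identify each inserted fibre and show $\wt\C^2$ is regular along every such line, which is Statement 2. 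Gluing these lines to the strict transform of $\{R\}\times C$ gives $C_R$ for $R$ irreducible (Statement 4) and $C(1)$ for $R$ reducible (Statement 5); the point is that, by the remark after Proposition \ref{lemabasis}, once the line over a pair is created all later blowups are isomorphisms near it. Flatness (Statement 1) I would then obtain from miracle flatness: $\C$ is regular, $\wt\C^2$ is Cohen--Macaulay (regular away from the unresolved cone points $x_0x_1=y_0y_1$, which are hypersurface singularities), and Statements 3--5 show every fibre of $\rho$ is a curve.

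For the last assertion, assume $R$ is a reducible node, so $X\cong C(1)$ with components the strict transforms $\wh C_k$ and the exceptional lines $E_S$. The key identity is $\rho^*C_i=\sum_k D_{i,k}$: since $\C_0=\sum_i C_i$ is reduced and the blowups are \emph{small} (they contract nothing in codimension one, merely replacing each cone point by a line), a local check gives this equality of Cartier divisors. Because $\rho^*C_i|_X$ is pulled back from the point $R$, it is trivial, yielding $\bigotimes_k\O_{\wt\C^2}(D_{i,k})|_X\cong\O_X$ for every $i$. To see that $\O_{\wt\C^2}(D)|_X$ is a twister at all, I would restrict to $\mathcal X:=\rho^{-1}(B')$ for a general arc $B'\subset\C$ through $R$: as $\rho(D_{i,k})=C_i$ meets $B'$ only at $R$, each $D_{i,k}\cap\mathcal X$ is supported in $X$, so $\O_{\wt\C^2}(D)|_X\cong\O_X(D\cap\mathcal X)$ is the class of a divisor supported on the components of $X$, hence a twister. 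Since twisters of $X$ are determined by multidegree (the analogue for $\mathcal X$ of the correspondence cited in Subsection \ref{preli}), it then suffices to match multidegrees. On $E_S$ the special fibre $\wt\C^2_0=\sum_{i,k}D_{i,k}$ restricts to degree $0$, only the four products through $(R,S)$ meet $E_S$, and the small-resolution geometry of Proposition \ref{lemabasis} gives $D_{i,k}\cdot E_S=-1$ for the two products containing $E_S$ (those with $(R,S)\in\N_{i,k}(\phi)$) and $+1$ for the two transverse ones; weighting by $w_{i,k}$ reproduces the degree $a_m+a_n-2b_S$ of the claimed twister, where $C_m,C_n$ are the components through $S$. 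On $\wh C_k$ I would use the identity above: for $i$ with $R\notin C_i$ the numbers $D_{i,k'}\cdot\wh C_k$ are non-negative and sum to $0$, so they vanish, while for the two components through $R$ the relation $\sum_{k'}D_{i,k'}\cdot\wh C_k=0$ converts the self-intersection term into transverse contributions at the nodes $\wh C_k\cap E_S$, each computed to be $0$ or $1$ in the charts of Proposition \ref{lemabasis}; the per-node total is $-a_k+b_S$, matching $\deg_{\wh C_k}$ of the claimed twister.

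The main obstacle is precisely that $X$ shares the components $\wh C_k$ (whenever $R\in C_i$) and, for two indices, the components $E_S$ with the divisors $D_{i,k}$, so $\O_{\wt\C^2}(D_{i,k})|_X$ cannot be read off as a naive intersection divisor. The identity $\sum_k D_{i,k}=\rho^*C_i$, trivial on $X$, is what turns these ill-behaved self-intersection contributions into honest transverse counts, and the reduction to $\mathcal X$ is what certifies that the answer is a twister rather than merely a line bundle of the correct multidegree. Verifying the transverse multiplicities is case-dependent on which of the four products was blown up over each $(R,S)$, but the representative cases give the same per-node value $-a_k+b_S$, so the resulting formula is independent of the blowup sequence, as it must be.
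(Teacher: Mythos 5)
Your proposal is correct, and for the crucial ``Furthermore'' part it rediscovers the paper's central device: the paper also restricts to an arc through $R$, namely the base change $\lambda\:B\to\C$ given by $x_0,x_1\mapsto t$ (so ramified, $t\mapsto t^2$ --- note no honest section of $\C/B$ can pass through a node, so the arc is necessarily of this kind), forms $\W=\wt\C^2\times_{\C}B$, and concludes twister-hood from the fact that the pullback of $D$ is Cartier and supported on the special fibre of $\W/B$. You differ in the two flanking computations, in both cases trading charts for structure. For Statement 1 the paper simply notes flatness is local and quotes Propositions \ref{fl} and \ref{lemabasis}; your miracle-flatness argument ($\C$ regular, $\wt\C^2$ Cohen--Macaulay since its unresolved singularities are hypersurface cones, all fibres curves) is valid and self-contained. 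For the coefficients, the paper computes $D_{i,k}|_\W$, $D_{i,l}|_\W$, $D_{j,k}|_\W$, $D_{j,l}|_\W$ explicitly in the charts of Proposition \ref{lemabasis}, whereas you use the (correct, since $\phi$ is small and $\wt\C^2$ normal) identity $\rho^*C_i=\sum_k D_{i,k}$, triviality of $\rho^*C_i|_X$ and of $\mathrm{div}(t)|_X$, and the classification of twisters by multidegree; this shrinks the case analysis to a few transverse multiplicities. One step you should make explicit rather than leave to ``general arc'': both the assertion that $\O_{\wt\C^2}(D)|_X$ is a twister in the sense of Subsection \ref{preli} and the multidegree correspondence (\cite{CAJM}, Lemma 3.4) require the total space of your $\mathcal X/B'$ to be regular at the reducible nodes of $X$; otherwise $\wh C_k$ and $E_S$ are not Cartier on $\mathcal X$ and the numbers $E_S^2=-2$, $\wh C_k\cdot E_S=1$ you use are undefined. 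The paper verifies exactly this by exhibiting $\W$ locally as the smooth surface $\alpha' t=\alpha y_1$, $\alpha' y_0=\alpha t$; the same chart check closes your argument.
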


\begin{proof} Since flatness is a local property, Statement~1 follows 
directly from Propositions~\ref{fl} and \ref{lemabasis}. 

Now, $\C^2$ is regular except at the pairs of nodes of $C$. Thus 
Statement 3 holds. Furthermore, suppose $R$ is a node of $C$. Then, for 
each node $S\in C$, if $S\neq R$ then $\Delta$ is Cartier at $(R,S)$, and if 
$R$ or $S$ is irreducible then every product $Y\times Z$ of subcurves 
of $C$ is Cartier at $(R,S)$. So $\phi$ is an isomorphism over a 
neighborhood of $(R,S)$ except if $S=R$ or both $R$ and $S$ are reducible 
nodes of $C$.

Suppose first that $R$ is irreducible. Then $\mu$ is an isomorphism 
except over $R$. Now, over a neighborhood of $(R,R)$, 
we have that $\wt\C^2$ is isomorphic 
to $\IP_{\C^2}(\I_\Delta)$. Thus $\mu^{-1}(R)$ is a smooth rational curve, 
$X$ is $C$-isomorphic to $C_R$ and 
$\wt\C^2$ is regular along $\mu^{-1}(R)$ by 
Proposition \ref{fl}. Statement 4 and part of Statement 2 are 
proved.     

Suppose now that $R$ is reducible. Then $\mu$ is an isomorphism 
except over reducible nodes of $C$. So, let $S\in\N(C)$. 
Recall the notation introduced before Proposition~\ref{lemabasis}. 
We may assume without loss of generality that 
$(R,S)\in\N_{i,l}(\phi)$. So, to describe $\mu$ on a neighborhood of $E_S$, 
we may assume $\phi$ is simply the blowup $[i,l]$. 

Then $E_S$ is a smooth rational curve and $\wt\C^2$ is regular along it 
by Proposition~\ref{lemabasis}. The proof of Statement 2 is complete. 
Moreover, recall the computation done in the proof of 
Proposition~\ref{lemabasis}. The fiber $X$ corresponds to the subscheme 
of $Y$ given by $x_0=x_1=0$. Its equations in $\IA^4_K\times\IP^1_K$ are thus
$$
\alpha'y_0=0,\quad \alpha y_1=0,\quad x_0=0,\quad x_1=0.
$$
This subscheme is the union of a projective line, given by $x_0=x_1=y_0=y_1=0$, 
and two disjoint affine lines, given by 
$x_0=x_1=y_0=\alpha=0$ and $x_0=x_1=y_1=\alpha'=0$, intersecting 
transversally the projective line at distinct points. The first affine line 
corresponds to the strict transform of $C_k$ and the second to that of $C_l$. 
This proves Statement 5. 

As for the last statements, 
consider the map $\lambda\:B\to\C$ sending the special 
point of $B$ to $R$ and induced by the ring homomorphism
$$
\frac{K[[x_0,x_1,t]]}{(x_0x_1-t)}\lra K[[t]]
$$
which sends $x_i$ to $t$ for $i=1,2$, and thus $t$ to $t^2$. Form the 
Cartesian diagram:
$$
\begin{CD}
\W @>\xi >> \wt\C^2\\
@V\rho_\lambda VV @V\rho VV\\
B @>\lambda >> \C.
\end{CD}
$$ 
Then $\rho_\lambda$ is a regular smoothing of $X$.

Indeed, the special fiber of $\rho_\lambda$ is isomorphic to $X$ under 
$\xi$, hereby identified with $X$. Let $\wt C$ denote the 
strict transform of $C$ under $\mu$. 
We need to show that $\W$ is regular along 
$E_S\cap\wt C$ for each $S\in\N(C)$. Again, 
recall the notation introduced before Proposition~\ref{lemabasis} and 
that introduced in its proof. We may assume that 
$(R,S)\in\N_{i,l}(\phi)$ and that $\phi$ is simply the blowup $[i,l]$. Thus 
$\W$ corresponds to the base change of $Y$ under the diagram
$$
\begin{CD}
@. \IA^4_K\times\IP^1_K\\
@. @VV((x_0,x_1,y_0,y_1),(\alpha:\alpha'))\mapsto(x_0,x_1)V\\
\IA^1_K @>t\mapsto(t,t)>> \IA^2_K,
\end{CD}
$$
that is, to the subscheme $W$ of $\IA^3_K\times\IP^1_K$, with coordinates 
$((t,y_0,y_1),(\alpha:\alpha'))$, given by
$$
\left\{\begin{array}{c}
\alpha't=\alpha y_1\\
\alpha' y_0=\alpha t\\
\end{array}\right.
$$
Since $W$ is smooth, so is $\W$ along $E_S$.

It follows that $\xi^{-1}(D)$ is a Cartier divisor supported on 
the special fiber of $\rho_\lambda$, and thus $\O_{\wt C^2}(D)|_X$ is a 
twister of $X$. 

More specifically, in order to describe $\O_{\wt C^2}(D)|_X$, 
we need only describe the pullbacks to $\W$ of 
$D_{i,k}$, $D_{i,l}$, $D_{j,k}$ and $D_{j,l}$. We need only do so in a neighborhood 
of $E_S$ for each $S\in\N(C)$. Thus we need 
only describe the pullbacks of the corresponding divisors to $W$ for 
$\alpha\neq 0$ and $\alpha'\neq 0$. 
If $\alpha'\neq 0$, then 
$\W$ corresponds to the subscheme of $\IA^4_K$ given 
by $y_0=\alpha t$ and $t=\alpha y_1$, and the pullbacks 
to the divisors given by $\alpha=0$, $y_1=0$, $t=0$ and $\alpha'=0$, 
respectively. On the other hand, if $\alpha\neq 0$, then 
$\W$ corresponds to the subscheme of $\IA^4_K$ given by 
$y_1=\alpha' t$ and $t=\alpha' y_0$, and the pullbacks 
to the divisors given by $\alpha=0$, $t=0$, $y_0=0$ and 
$\alpha'=0$, respectively. Thus
\begin{align*}
D_{i,k}|_\W=&\cdots+\wh C_k+0E_S+0\wh C_l+\cdots,\\
D_{i,l}|_\W=&\cdots+0\wh C_k+E_S+\wh C_l+\cdots,\\
D_{j,k}|_\W=&\cdots+\wh C_k+E_S+0\wh C_l+\cdots,\\
D_{j,l}|_\W=&\cdots+0\wh C_k+0E_S+\wh C_l+\cdots.
\end{align*}
Use now that $(R,S)\in\N_{i,l}(\phi)\cap\N_{j,k}(\phi)$ but 
$(R,S)\not\in\N_{i,k}(\phi)\cup\N_{j,l}(\phi)$ to conclude.
\end{proof}

\subsection{The triple product}\label{subsintersect}

The blowup of (any modification of) $\C^2$ along 
(the strict transform of) the product $X\times Y$ of two subcurves 
$X$ and $Y$ of $C$ is denoted by $[X,Y]$. 
We will also denote the blowup of (any 
modification of) $\C^3$ along 
(the strict transform of) the product $X\times Y\times Z$ of three 
subcurves $X$, $Y$ and $Z$ of $C$ by $[X,Y,Z]$. If $X$, $Y$ and $Z$ are 
components, say $X=C_i$, $Y=C_l$ and $Z=C_m$, we shall use the shorter 
notation $[i,l,m]$ for $[C_i,C_l,C_m]$. We will consider a partial 
desingularization of $\C^3$ consisting of the base change of the sequence 
of blowups in \eqref{b2}, resulting in a map 
$\wt\C^2\to\C^2$, followed by a sequence of blowups of $\wt\C^2\times_B\C$ 
along strict transforms of products of three subcurves of $C$. Symbolically, 
the partial desingularization of $\C^3$ 
can be described by a sequence of the form:
\begin{equation}\label{b3}
[\Delta], [X_1,Y_1],\dots,[X_u,Y_u],[X_{u+1},Y_{u+1},Z_{u+1}],
\dots,[X_{u+v},Y_{u+v},Z_{u+v}].
\end{equation}

If the sequence of blowups in \eqref{b3} is long and varied enough, we 
obtain a good partial desingularization $\wt\C^2$ of $\C^2$, and the 
resulting modification $\wt\C^3$ of $\wt\C^2\times_B\C$ 
will be such that no other blowup along the (strict transform of a) product of three 
subcurves of $C$ affects it, that is, such that 
the strict transforms to $\wt\C^3$ of 
all the products $X\times Y\times Z$ of 
subcurves $X$, $Y$ and $Z$ of $C$ are Cartier. In this case, we call 
$\wt\C^3$ a \emph{good partial desingularization} of 
$\wt\C^2\times_B\C$. 

Recall the natural subscheme $F_2\subset\mathbb P_{\C^2}(\I_{\Delta|\C^2})\times_B\C$ 
defined in Subsection~\ref{flag}. The map $\phi$ factors through the
structure map $\mathbb P_{\C^2}(\I_{\Delta|\C^2})\to\C^2$, whence we
may consider the strict transform of $F_2$ to $\wt\C^3$.

\begin{Prop}\label{propblows2} 
Let $\phi\:\wt\C^2\to\C^2$ and $\psi\:\wt\C^3\to\wt\C^2\times_B\C$ 
be good partial desingularizations. Let $\rho=p_1\circ\psi$, where 
$p_1\:\wt\C^2\times_B\C\to\wt\C^2$ is the projection. Then $\rho$ is
flat. In addition, let $A$ be a closed point of 
$\wt\C^2$ and set $(R_1,R_2):=\phi(A)$. Put 
$X:=\rho^{-1}(A)$. Let $\mu\:X\to C$ be the restriction to $X$ of 
$\psi$ followed by the second projection $p_2\:\wt\C^2\times_B\C\to\C$. 
Then the following statements hold:
\begin{itemize}
\item[1.] If neither $R_1$ nor $R_2$ is a reducible node 
of $C$, then $\mu$ is an isomorphism.
\item[2.] If only one between $R_1$ and $R_2$ is a reducible node of $C$, 
or if both are but $A$ is not one of the two distinguished points of 
$\phi^{-1}(R_1,R_2)$, then $X$ is $C$-isomorphic to $C(1)$.
\item[3.] If $R_1$ and $R_2$ are reducible nodes of $C$ and $A$ 
is one of the two distinguished points of 
$\phi^{-1}(R_1,R_2)$, then $X$ is $C$-isomorphic to $C(2)$. 
\end{itemize}
Finally, let
$\wt\Delta_1$, $\wt\Delta_2$ and $\wt F_2$ denote the strict
transforms of $\Delta_1$, $\Delta_2$ and $F_2$ to $\wt\C^3$. Let
$T\in C$. Then
the following statements hold:
\begin{itemize}
\item[4.] For $i=1,2$, the strict transform $\wt\Delta_i$ is $\rho$-flat along 
$\mu^{-1}(T)$ unless, possibly, $R_i$ is a reducible node of $C$
and $T=R_i$.
\item[5.] For $i=1,2$, the strict transform $\wt\Delta_i$ is Cartier along 
$\mu^{-1}(T)$ unless $R_i$ is an irreducible node of $C$ and $T=R_i$.
\item[6.] $\wt F_2$ is $\rho$-flat along $\mu^{-1}(T)$ 
unless, possibly, $T$ is a reducible node of $C$ and 
either $R_1=T$ or $R_2=T$.
\item[7.]
  $\I_{\wt{F_2}|\wt\C^3}=\I_{\wt\Delta_1|\wt\C^3}\I_{\wt\Delta_2|\wt\C^3}=
\I_{\wt\Delta_1|\wt\C^3}\otimes\I_{\wt\Delta_2|\wt\C^3}$ in a
neighborhood of $\mu^{-1}(T)$, unless $T$ is an irreducible node of
$C$ and $R_1=R_2=T$.
\end{itemize}
\end{Prop}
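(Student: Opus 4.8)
The plan is to reduce every assertion to the local analytic computations of Section~\ref{3}. Flatness of $\rho$, the property that $\mu$ be an isomorphism, the isomorphism type of the fibre $X$, and the Cartier/$\rho$-flatness/ideal-theoretic claims about $\wt\Delta_1,\wt\Delta_2,\wt F_2$ are all local on $\wt\C^3$, so they may be checked after passing to completed local rings at the points of $\mu^{-1}(T)$. This is the exact analogue, one dimension up, of the proof of Proposition~\ref{lemabasis2}: there the fibre of $\wt\C^2\to\C$ was read off from Propositions~\ref{fl} and~\ref{lemabasis}, and here the fibre of $\rho\:\wt\C^3\to\wt\C^2$ will be read off from those two together with Lemma~\ref{propblows}. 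The organizing principle is the factorization of the uniformizer $t$ in $\O_{\wt\C^2,A}$: I would first check that, up to a unit, $t$ is a single local parameter, a product of two, or a product of three, according precisely to whether neither of $R_1,R_2$ is a reducible node, exactly one is (or both are but $A$ is off the two distinguished points of $\phi^{-1}(R_1,R_2)$), or both are reducible nodes and $A$ is a distinguished point. The first cases follow from Propositions~\ref{fl} and~\ref{lemabasis}, where the equation $t=uvw$ at a distinguished point degenerates to $t=vw$ elsewhere on the exceptional line (the coordinate $u$ being a unit there); the last case is exactly the equation $z_.z_{..}=abc$ heading Lemma~\ref{propblows}.

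Granting this trichotomy, statements 1--3 and the flatness of $\rho$ fall out case by case. When both $R_1,R_2$ are reducible nodes and $A$ is a distinguished point, the completed local ring of $\wt\C^2\times_B\C$ at $(A,T)$ is the hypersurface $z_.z_{..}=abc$, and the part of $\psi$ near $(A,T)$ is the pair of blowups $\psi_1,\psi_2$; Lemma~\ref{propblows}(1) then gives at once that $\rho$ is flat and that $X$ acquires over $T$ a chain $E_1,E_2$ of two rational curves, i.e. $X\cong C(2)$ locally, whence $X\cong C(2)$ after assembling over all reducible nodes. In the two-factor case the model is a quadric $z_0z_1=$(product of two parameters), resolved by the single blowup of Propositions~\ref{fl}/\ref{lemabasis}, which inserts one $\IP^1_K$ over each reducible node $T$, giving $X\cong C(1)$. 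Finally, when neither $R_1$ nor $R_2$ is a reducible node, the point, as for irreducible nodes in Proposition~\ref{lemabasis2}, is that the blowup centres $X\times Y\times Z$ meet the fibre $p_1^{-1}(A)$ only in Cartier (or empty) loci, because the strict transform of $X\times Y$ does not separate branches at $A$; hence $\psi$ is an isomorphism in a neighbourhood of $p_1^{-1}(A)$ and $\mu$ is an isomorphism, even though $\wt\C^3$ may remain singular over $(A,T)$ (goodness does not require regularity there). For $T$ irreducible or smooth in any of the cases, the centres again do not separate the branches, so no chain is inserted, consistently with $C(1)$ and $C(2)$ replacing only reducible nodes.

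For statements 4--7 I would use that $\wt\Delta_i$ meets the region over $(R_1,R_2)$ only at the coincidence $T=R_i$, so for $T\neq R_i$ the strict transform is vacuously Cartier and $\rho$-flat near $\mu^{-1}(T)$. At $T=R_i$ the local equations of $\Delta_1,\Delta_2$ recorded in Subsection~\ref{blowup3} (for instance $z_0-uw=z_1-v=0$ and $z_0-uv=z_1-w=0$ at the point $A$) become, under $a=u$, $b=v$, $c=w$, $z_.=z_0$, $z_{..}=z_1$, precisely the subschemes heading the columns of the second table of Lemma~\ref{propblows}. That table shows their strict transforms are Cartier on $Y_2$ and pull back to $W_2$ as a combination of $\wh E_1,\wh E_2$ together with a single $\rho_\lambda$-flat prime divisor meeting the central fibre transversally on the component named in the last row. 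Pulling this back along the regular smoothing $\W\to B$ of $X$ (the analogue of the map $\lambda$ of Proposition~\ref{lemabasis2} and of Lemma~\ref{propblows}(2)) yields statement~5 (Cartier, failing exactly when $R_i$ is an irreducible node and $T=R_i$) and statement~4 ($\rho$-flat with the transversal component identified, failing possibly when $R_i$ is a reducible node and $T=R_i$). For $\wt F_2$ I would invoke the identity $\I_{F_2|\C^3}=\I_{\Delta_1}\I_{\Delta_2}$ from the end of Subsection~\ref{flag} and propagate it through $\psi_1,\psi_2$ by combining the two columns of the table belonging to $\Delta_1$ and $\Delta_2$; statement~6 then follows.

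The hard part will be statement~7, the identity $\I_{\wt F_2|\wt\C^3}=\I_{\wt\Delta_1|\wt\C^3}\I_{\wt\Delta_2|\wt\C^3}=\I_{\wt\Delta_1|\wt\C^3}\ox\I_{\wt\Delta_2|\wt\C^3}$. Set-theoretic and flatness information is not enough here: one must show that blowing up introduces no extra or embedded components into the strict transform of $F_2$ and, for the second equality, that the two diagonal ideals are Tor-independent in a neighbourhood of $\mu^{-1}(T)$, so that their product computes their tensor product. The excluded case, $T$ an irreducible node with $R_1=R_2=T$, is precisely where both $\wt\Delta_1$ and $\wt\Delta_2$ fall on the same branch of $T$ and the product acquires torsion, so the identity genuinely fails there. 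I would settle the generic case by the explicit chart-by-chart computation on $Y_2$, comparing a generating set of $\I_{\wt\Delta_1}\I_{\wt\Delta_2}$ with the defining ideal of $\wt F_2$ and using the transversality already extracted from the second table of Lemma~\ref{propblows} to guarantee that the two divisors meet properly off the excluded locus; pinning down the boundary coincidences $T=R_1$ and $T=R_2$ correctly, rather than the algebra itself, is what demands the most care.
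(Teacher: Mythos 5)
Your plan for the flatness of $\rho$ and for Statements 1--6 is essentially the paper's own proof: the same reduction to the completed local rings, the same trichotomy in the factorization of $t$ at $A$ (one, two or three parameters), Lemma~\ref{propblows} at the distinguished points, and the degeneration ``$u$ a unit'' off them --- the paper phrases this last step via the observation following Lemma~\ref{propblows}, namely that a good $\psi$ must contain a blowup along the strict transform of a suitable $D\times Z$, after which $p_1$ looks like the map $p$ over a point $(a,b,c)$ with $b\neq 0$ and $\psi$ like $\psi_1$ alone. Likewise your treatment of Statements 4 and 6 ($p_1$-flatness of the strict transforms of $\Delta_1,\Delta_2,F_2$ to $\wt\C^2\times_B\C$ by base change, plus the vacuity $\wt\Delta_i\cap\mu^{-1}(T)=\emptyset$ for $T\neq R_i$) is what the paper does.

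The real divergence is Statement 7, which you flag as the hard part and propose to attack by Tor-independence and a chart-by-chart comparison of generators on $Y_2$. The paper disposes of it in one line, \emph{before} everything else, by deducing it from Statement 5: wherever one of $\wt\Delta_1,\wt\Delta_2$ is Cartier, its ideal is invertible, so the natural surjection $\I_{\wt\Delta_1|\wt\C^3}\ox\I_{\wt\Delta_2|\wt\C^3}\to\I_{\wt\Delta_1|\wt\C^3}\I_{\wt\Delta_2|\wt\C^3}$ is an isomorphism and the equalities hold near $\mu^{-1}(T)$; by Statement 5 both transforms fail to be Cartier along $\mu^{-1}(T)$ only when $T$ is an irreducible node with $R_1=R_2=T$, which is exactly the excluded case. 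Your route would probably succeed, but at the cost of exhibiting the defining ideal of $\wt F_2$ in every chart and excluding embedded components by hand, and as written (``comparing a generating set \dots with the defining ideal of $\wt F_2$'') it is not pinned down; the invertibility observation, available from the Statement 5 you have already proved, makes all of that unnecessary. A second, smaller soft spot: you assert that Cartier-ness in Statement 5 fails \emph{exactly} when $R_i$ is an irreducible node and $T=R_i$, but your table-based argument only covers the all-reducible, distinguished-point configuration. When $R_1=R_2=T$ is an irreducible node, $\phi$ is \emph{not} an isomorphism over $(R_1,R_2)$ --- the diagonal blowup acts there --- so the failure is not simply inherited from $\Delta_i$ on $\C^3$; the paper needs a separate local computation, of the kind carried out before Lemma~\ref{propblows}, to show $\wt\Delta_i$ still fails to be Cartier along $\mu^{-1}(T)$ in that case. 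Supplying these two points would bring your argument in line with the paper's.
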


\begin{proof} Let $R_1,R_2,T\in C$ and $A\in\phi^{-1}(R_1,R_2)$. 
Let $X$ and $\mu$ be as in the statement of the proposition. 
First of all, notice that Statement~7 follows from Statement~5. Indeed,
$$
\I_{\wt{F_2}|\wt\C^3}=\I_{\wt\Delta_1|\wt\C^3}\I_{\wt\Delta_2|\wt\C^3}=
\I_{\wt\Delta_1|\wt\C^3}\otimes\I_{\wt\Delta_2|\wt\C^3}
$$
in a neighborhood of $\mu^{-1}(T)$ if either $\wt\Delta_1$ or
$\wt\Delta_2$ is Cartier along $\mu^{-1}(T)$. 

To prove the remaining statements, notice first that all 
the products $X\times Y\times Z$ 
of proper subcurves 
$X$, $Y$ and $Z$ of $C$ are Cartier at $(R_1,R_2,T)$ 
if at most one among $R_1$, $R_2$ and $T$ is a reducible node of $C$. 
Statement 1 follows. Furthermore, the strict transform of each 
product $X\times Y\times Z$ to 
$\wt\C^2\times_B\C$ is the strict transform of 
$D\times Z$, where $D$ is the strict transform to $\wt\C^2$ of 
$X\times Y$. Since $\phi$ is good, any such $D$ is Cartier, and hence
the strict transforms of all the products $X\times Y\times Z$ to 
$\wt\C^2\times_B\C$ are Cartier at $(A,T)$ if $T$ 
is not a reducible node of $C$. Thus $\psi$ fails to be an isomorphism
over $(A,T)$ and $\rho$ fails to be flat in a neighborhhod of
$\mu^{-1}(T)$ only if $T$ and $R_i$ are reducible nodes of $C$ for $i=1$ or
$i=2$.

Now, by base change, the strict transforms of $\Delta_1$, $\Delta_2$ and
$F_2$ to $\wt\C^2\times_B\C$ are $p_1$-flat. Thus, $\wt\Delta_1$ or $\wt\Delta_2$ or
$\wt F_2$ is $\rho$-flat along $\mu^{-1}(T)$ unless, possibly, $T$ is a reducible
node of $C$. However, $\wt\Delta_i$ does not intersect $\mu^{-1}(T)$
unless $R_i=T$, for $i=1,2$.  Statements 4 and 6 follow. 

It follows as
well that $\wt\Delta_i$ is trivially Cartier along $\mu^{-1}(T)$ if
$T\neq R_i$, for $i=1,2$. If $T=R_i$ and $R_i$ is a nonsingular point
of $C$, then $\Delta_i$ is Cartier at $(R_1,R_2,T)$ and both $\psi$ is
an isomorphism over $(A,T)$ and $\phi$ is an isomorphism
over $(R_1,R_2)$. So $\wt\Delta_i$ is Cartier along $\mu^{-1}(T)$ if $R_i$ is
a nonsingular point of $C$.

If $T=R_i$ and $R_i$ is an irreducible node of $C$, then $\psi$ is
an isomorphism over $(A,T)$. Furthermore, $\phi$ is an isomorphism
over $(R_1,R_2)$ unless $R_1=R_2$. Since $\Delta_i$ fails to be
Cartier at $(R_1,R_2,T)$, it follows that $\wt\Delta_i$ fails to be
Cartier along $\mu^{-1}(T)$ unless $R_1=R_2$. However, even in this case,
$\wt\Delta_i$ fails to be Cartier along $\mu^{-1}(T)$ as well, as a
local reasoning, similar to that done before Lemma~\ref{propblows},
shows. 

To finish the proof of the proposition we
may now assume, without loss of generality, that $R_2$ is a reducible
node of $C$. We need only 
describe the structure of $\wt\C^3$ locally around $\mu^{-1}(T)$ when $T$ is a 
reducible node of $C$, and show 
that $\rho$ is flat and $\wt\Delta_2$ is Cartier 
along $\mu^{-1}(T)$.

Assume first that $R_1$ is not a reducible node of $C$. Then $\phi$ is 
an isomorphism over $(R_1,R_2)$ and 
$\psi$ is locally around $X$ of the form 
$1_\C\times_B\varphi\:\C\times_B\wh\C^2\to\C^3$, 
where $\varphi\:\wh\C^2\to\C^2$ is a good partial desingularization. 
Thus $X$ is $C$-isomorphic to $C(1)$ and $\rho$ is flat along $X$ by 
Proposition \ref{lemabasis2}. 

If $R_1$ is not an irreducible node of 
$C$ either, it follows as well from the same proposition that 
$\wt\C^3$ is regular along each smooth rational curve of $X$ 
contracted by $\mu$. Now, $\wt\Delta_2$ intersects $X$ 
along $\mu^{-1}(R_2)$, which is a smooth rational curve of $X$ because 
$R_2$ is reducible. Since $\wt\C^3$ is regular along $\mu^{-1}(R_2)$, it 
follows that $\wt\Delta_2$ is Cartier along $X$. 

We may now assume that $R_1$ is a reducible node of $C$. Assume as well
that $T$ is a 
reducible node of $C$. Suppose first 
that $A$ is one of the distinguished points of $\phi^{-1}(R_1,R_2)$. Then 
$p_1$ looks locally analytically over $(A,T)$ like the map $p$ in 
Lemma~\ref{propblows} over the origin, and $\psi$ like $\psi_1\psi_2$. 
In this case, the flatness of $\rho$ along $\mu^{-1}(T)$ and
Statements 3 and 5 follow from that lemma.

Suppose now that $A$ is not any of the distinguished points of
$\phi^{-1}(R_1,R_2)$. 
Recall the observation after the proof of Lemma~\ref{propblows}: In order 
that $\psi$ be a good partial desingularization, one of the blowups leading to 
it must be along (the strict transform of) $D\times Z$, where $D$ is 
the strict transform of a product $X\times Y$ 
of two subcurves $X$ and $Y$ of $C$ such that $R_1\in X\cap X'$, $R_2\in Y\cap Y'$ 
and $\phi^{-1}(R_1,R_2)\subseteq D$, and $Z$ is a subcurve of $C$ such that 
$T\in Z\cap Z'$. Then
$p_1$ looks locally analytically over $(A,T)$ like the map $p$ in 
Lemma~\ref{propblows} over a point $(a,b,c)$ with $b\neq 0$, and $\psi$ like 
$\psi_1$. In this situation, the local computations 
are similar to those done in the proofs of Propositions \ref{lemabasis} and 
\ref{lemabasis2}. Thus we get Statement~2, as well as the flatness of 
$\rho$ at any point of $\wt\C^3$ over $(A,T)$ and the regularity of 
$\wt\C^3$ along $\mu^{-1}(T)$ for every $T\in\N(C)$. Finally, since 
$\wt\Delta_2$ is trivial along $X$ away from 
$\mu^{-1}(R_2)$, the regularity of $\wt\C^3$ there yields that 
$\wt\Delta_2$ is Cartier along all of $X$, finishing the proof of
Statement 5. 
\end{proof}

Let $R$, $S$ and $T$ be reducible nodes of $C$. Let $A\in\wt\C^2$ be a 
distinguished point of $\phi^{-1}(R,S)$. We say that $[X,Y,Z]$ 
\emph{affects} $(A,T)$ if 
$$
R\in X\cap X',\quad S\in Y\cap Y',\quad T\in Z\cap Z',
$$
and $A$ lies on the strict transform to $\wt\C^2$ of $X\times Y$. 
Let $C_i$ and $C_j$ be the components containing $R$, 
let $C_k$ and $C_l$ be those containing $S$, and $C_m$ and $C_n$ those 
containing $T$. We say that $[X,Y,Z]$ is \emph{of type $[i,k,m]$} at 
$(R,S,T)$ if 
$$
C_i\times C_k\times C_m\subseteq X\times Y\times Z\quad\text{and}\quad
C_j\times C_l\times C_n\subseteq X'\times Y'\times Z'.
$$

\begin{Prop}\label{propblows2a} 
Let $\phi\:\wt\C^2\to\C^2$ and $\psi\:\wt\C^3\to\wt\C^2\times_B\C$ 
be good partial desingularizations, given by the sequence of blowups
\begin{equation}\label{tagbp}
[\Delta], [X_1,Y_1],\dots,[X_u,Y_u],[X_{u+1},Y_{u+1},Z_{u+1}],
\dots,[X_{u+v},Y_{u+v},Z_{u+v}].
\end{equation}
Fix reducible nodes $R$ and $S$ of $C$, 
and a distinguished point $A$ of $\phi^{-1}(R,S)$. Then there is a unique 
choice of integers $i,j,k,l\in\{1,\dots,p\}$ such that
$A$ is the intersection of the strict transforms to $\wt\C^2$ of 
$$
C_i\times C_k,\quad C_i\times C_l,\quad C_j\times C_k.
$$ 
Put $X:=\psi^{-1}p_1^{-1}(A)$, 
where $p_1\:\wt\C^2\times_B\C\to\wt\C^2$ is the projection. 
For each $m=1,\dots,p$, let $\wh C_m$ denote the strict transform to $X$ of 
$C_m$ under $p_2\psi$, 
where $p_2\:\wt\C^2\times_B\C\to\C$ is the projection.
For each reducible node $T$ of $C$, 
let $m_T,n_T\in\{1,\dots,p\}$ be the distinct integers such that 
$T\in C_{m_T}\cap C_{n_T}$, and let $E_{T,m_T}$ (resp. $E_{T,n_T}$) 
be the rational curve in 
$\psi^{-1}(A,T)$ intersecting $\wh C_{m_T}$ 
(resp. $\wh C_{n_T}$). Choose $m_T$ such that the first blowup in \eqref{tagbp} 
to affect $(A,T)$ is of type $[r_{T,1},s_{T,1},m_T]$ for 
$(r_{T,1},s_{T,1})\in\{i,j\}\times\{k,l\}$. Also, let 
$(r_{T,2},s_{T,2}),(r_{T,3},s_{T,3})\in\{i,j\}\times\{k,l\}$ distinct from 
each other and from $(r_{T,1},s_{T,1})$ such that
\begin{enumerate}
\item $\{(r_{T,1},s_{T,1}),(r_{T,2},s_{T,2}),(r_{T,3},s_{T,3})\}=
\{(i,k),(i,l),(j,k)\}$,
\item either $[r_{T,2},s_{T,2},n_T]$ or $[r_{T,3},s_{T,3},m_T]$ appears 
first in the sequence of types of blowups affecting $(A,T)$.
\end{enumerate}
For each $r,s,m\in\{1,\dots,p\}$, let $D_{r,s,m}$ denote the strict transform to 
$\wt\C^3$ of $C_r\times C_s\times C_m$. Also, let $\wt\Delta_1$ and 
$\wt\Delta_2$ denote the 
strict transforms to $\wt\C^3$ of $\Delta_1$ and $\Delta_2$. Let
$$
D:=\sum_{r,s,m}w_{r,s,m}D_{r,s,m},
$$
where the $w_{r,s,m}$ are given integers. For each $m=1,\dots,p$, set
\begin{equation}\label{wm}
w(m):=w_{i,k,m}+w_{i,l,m}+w_{j,k,m}.
\end{equation}
For each reducible node $T$ of $C$, 
set 
\begin{equation}\label{wmT}
\begin{array}{c}
w(T,m_T):=w_{r_{T,1},s_{T,1},m_T}+w_{r_{T,2},s_{T,2},n_T}+w_{r_{T,3},s_{T,3},m_T}\\
w(T,n_T):=w_{r_{T,1},s_{T,1},m_T}+w_{r_{T,2},s_{T,2},n_T}+w_{r_{T,3},s_{T,3},n_T}.
\end{array}
\end{equation}
Finally, set 
\begin{align}
\big(h(R,i),h(R,j)\big):=&
\begin{cases}\label{wmR}
(1,0)&\text{if $r_{R,1}=r_{R,2}=i$}\\
(1,1)&\text{if $(r_{R,1},m_R)=(j,i)$ or $(r_{R,2},n_R)=(j,i)$}\\
(0,0)&\text{if $(r_{R,1},m_R)=(j,j)$ or $(r_{R,2},n_R)=(j,j)$,} 
\end{cases}\\
\big(h(S,k),h(S,l)\big):=&
\begin{cases}\label{wmS}
(1,0)&\text{if $s_{S,1}=s_{S,2}=k$}\\
(1,1)&\text{if $(s_{S,1},m_S)=(l,k)$ or $(s_{S,2},n_S)=(l,k)$}\\
(0,0)&\text{if $(s_{S,1},m_S)=(l,l)$ or $(s_{S,2},n_S)=(l,l)$.} 
\end{cases}
\end{align}
Let $\lambda\:B\to\wt\C^2$ be any section of $\wt\C^2/B$ sending the 
special point of $B$ to $A$ and such that the pullbacks of the strict 
transforms of $C_i\times C_k$, $C_i\times C_l$, and $C_j\times C_k$ are all 
prime. Form the Cartesian diagram:
$$
\begin{CD}
\W @>\xi >> \wt\C^3\\
@V\rho VV @Vp_1\psi VV\\
B @>\lambda >> \wt\C^2.
\end{CD}
$$
Then $\rho$ is a regular smoothing of $C$, and the pullbacks of 
$\wt\Delta_1$, $\wt\Delta_2$ and $D$ to $\W$ under $\xi$ are 
Cartier divisors. More precisely,
\begin{align*}
\xi^*D=&\sum_{m=1}^pw(m)\wh C_m+\sum_{T\in\N(C)}\big(w(T,m_T)E_{T,m_T}+
w(T,n_T)E_{T,n_T}\big)\\
\xi^*\wt\Delta_1=&h(R,i)E_{R,i}+h(R,j)E_{R,j}+\Gamma_1\\
\xi^*\wt\Delta_2=&h(S,k)E_{S,k}+h(S,l)E_{S,l}+\Gamma_2,
\end{align*}
where $\Gamma_1$ and $\Gamma_2$ are relative effective Cartier divisors of 
$\W/B$ intersecting $X$ transversally on $E_{R,i}$ and $E_{S,k}$, 
respectively.
\end{Prop}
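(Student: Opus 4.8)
The plan is to reduce the whole statement to the local analytic computations of Lemma~\ref{propblows}, exactly as Proposition~\ref{lemabasis2} reduced its twister formula to a single base change. I would begin by recording the local picture supplied by Proposition~\ref{propblows2}: over a point $(A,T)$ with $T$ a reducible node, the pair $(p_1\psi,\psi)$ is locally analytically the pair $(p,\psi_1\psi_2)$ of Lemma~\ref{propblows} around the origin, so $\wt\C^3$ is modelled there by $Y_2$, while over every other $(A,T)$ the map $\psi$, together with $\phi$, is the double-product desingularization already analyzed in Propositions~\ref{lemabasis} and~\ref{lemabasis2}. The uniqueness of $i,j,k,l$ is Proposition~\ref{lemabasis}, Statement~2: a distinguished point lies on exactly three of the four transforms, and the incidence pattern ($C_i$ and $C_k$ each occurring in two of the three products, $C_j$ and $C_l$ in one) pins the labels down. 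I would then fix, over each reducible node $T$, the dictionary with the coordinates $a,b,c,z_.,z_{..}$ of Lemma~\ref{propblows}, following Subsection~\ref{blowup3}: the three transforms through $A$ become $a=0$, $b=0$, $c=0$ in the order $C_{r_{T,1}}\times C_{s_{T,1}},\,C_{r_{T,2}}\times C_{s_{T,2}},\,C_{r_{T,3}}\times C_{s_{T,3}}$ dictated by the blowup sequence, the two branches at $T$ become $z_.=0$ and $z_{..}=0$, and $\wt X_.,\wt X_{..},E_1,E_2$ become $\wh C_{m_T},\wh C_{n_T},E_{T,n_T},E_{T,m_T}$. The choice of $m_T$ and conditions~(1)--(2) are precisely what makes this dictionary consistent, since $\psi_1$ blows up $z_.=a=0$ and $\psi_2$ the transform of $z_{..}=b=0$.

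With the dictionary fixed, forming $\W$ by the Cartesian square and proving that $\rho$ is a regular smoothing is immediate. The hypothesis that $\lambda$ meets the transforms of $C_i\times C_k$, $C_i\times C_l$ and $C_j\times C_k$ with prime pullback is exactly what identifies $\lambda$, over each reducible $T$, with the transversal arc $t\mapsto(t,t,t)$ of Lemma~\ref{propblows}; hence $\W$ is the surface $W_2$ of that lemma near $\mu^{-1}(T)$ and is smooth there by Statement~2, while away from the reducible nodes regularity and flatness follow from Propositions~\ref{lemabasis} and~\ref{lemabasis2}. The special fibre of $\rho$ is $X$, which by Proposition~\ref{propblows2} is $C$-isomorphic to $C(2)$, and the generic fibre is the smooth generic fibre of $\C/B$, since every blowup in~\eqref{tagbp} is supported over $0\in B$ and $\lambda$ is a section. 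So $\rho$ is a regular smoothing, and a divisor Cartier along the special fibre pulls back under $\xi$ to a Cartier divisor; it remains only to read off multiplicities.

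For $\xi^*D$ I would argue component by component. At the generic point of $\wh C_m$ the only products meeting $X$ are the three through $A$, namely $C_i\times C_k\times C_m$, $C_i\times C_l\times C_m$ and $C_j\times C_k\times C_m$, each pulling back with multiplicity $1$ by transversality of $\lambda$; their sum is the coefficient $w(m)$ of~\eqref{wm}. Along the exceptional curves $E_{T,m_T}=\wh E_2$ and $E_{T,n_T}=\wh E_1$ the multiplicities come directly from the first table of Lemma~\ref{propblows}: under the dictionary its six columns are the six products $C_{r_{T,\bullet}}\times C_{s_{T,\bullet}}\times C_{m_T}$ and $C_{r_{T,\bullet}}\times C_{s_{T,\bullet}}\times C_{n_T}$, and the entries in the $\wh E_2$ and $\wh E_1$ rows add up precisely to $w(T,m_T)$ and $w(T,n_T)$ of~\eqref{wmT}. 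This yields the asserted formula for $\xi^*D$.

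The computation of $\xi^*\wt\Delta_1$ and $\xi^*\wt\Delta_2$ is where the real work lies, and is the step I expect to be the main obstacle. By Proposition~\ref{propblows2}, $\wt\Delta_1$ meets $X$ only over $T=R$ and $\wt\Delta_2$ only over $T=S$, so I localize there, where $\{m_R,n_R\}=\{i,j\}$ and $\{m_S,n_S\}=\{k,l\}$. Using the local equations of $\Delta_1,\Delta_2$ from Subsection~\ref{blowup3} (forms such as $z_0-uw=z_1-v=0$), each diagonal becomes, in the coordinates $a,b,c,z_.,z_{..}$, one of the columns $\{z_.=\text{monomial},\,z_{..}=\text{monomial}\}$ of the second table of Lemma~\ref{propblows}; matching it, the coefficients of $E_{R,i},E_{R,j}$ (resp. $E_{S,k},E_{S,l}$) come from the $\wh E_1,\wh E_2$ rows, and the transversal residual $\Gamma_1$ (resp. $\Gamma_2$) is the $\rho_\lambda$-flat prime divisor of the bottom row, meeting $X$ on the indicated exceptional curve. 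The delicacy is that which column occurs depends on the blowup order and on the labels at $R$ and $S$; running through this is exactly what produces the three cases of~\eqref{wmR} and of~\eqref{wmS}, and one must further check that the residual always meets $X$ transversally on the curve indexed by $i$ (resp. $k$), so that $\Gamma_1$ meets $X$ on $E_{R,i}$ and $\Gamma_2$ on $E_{S,k}$. Verifying that the three cases exhaust the possibilities and match the table column by column is the routine but unavoidable bookkeeping that finishes the proof.
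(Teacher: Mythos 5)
Your proposal is correct and follows essentially the same route as the paper's own proof: you reduce everything to the local model of Lemma~\ref{propblows} via the same dictionary ($a,b,c$ matched to the three strict transforms through $A$ in the order dictated by the blowup sequence, $z_.,z_{..}$ to $C_{m_T},C_{n_T}$, and $\wt X_.,\wt X_{..},E_1,E_2$ to $\wh C_{m_T},\wh C_{n_T},E_{T,n_T},E_{T,m_T}$), identify $\lambda$ locally with the arc $t\mapsto(t,t,t)$ so that $\W$ is the smooth surface $W_2$, and read the multiplicities in $\xi^*D$, $\xi^*\wt\Delta_1$ and $\xi^*\wt\Delta_2$ off the two tables of that lemma, exactly as the paper does. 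The bookkeeping you defer at the end --- running through the six orderings of $\{(i,k),(i,l),(j,k)\}$ and the two labelings $m_R=i$ or $m_R=j$ to match the diagonal's equations with the column headings of the second table, and checking that the residual flat divisor always meets $X$ on $E_{R,i}$ (resp.\ $E_{S,k}$) --- is precisely the case analysis the paper's proof carries out, so nothing essential is missing.
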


\begin{proof} Locally analytically, the map $\lambda$ is defined by 
the homomorphism of $K$-algebras
$$
K[[a,b,c]]\lra K[[t]],
$$
sending $a$, $b$ and $c$ to $t$, where 
$a=0$, $b=0$ and $c=0$ are local equations at $A$ 
of the strict transforms to $\wt\C^2$ of 
$C_i\times C_k$, $C_i\times C_l$ and $C_j\times C_k$. To show 
that $\rho$ is a smoothing of $C$, we need only show that 
$\W$ is regular along $\xi^{-1}\psi^{-1}(A,T)$ for 
every reducible node $T$ of $X$. This follows from the regularity of $W_2$ 
in Lemma~\ref{propblows}.

It follows that $\xi^*D$, $\xi^*\wt\Delta_1$ and $\xi^*\wt\Delta_2$ 
are Cartier divisors, 
the first supported on the special 
fiber of $\rho$. Also, the supports of 
$\xi^*\wt\Delta_1$ and $\xi^*\wt\Delta_2$ intersect $X$ only along 
$\xi^{-1}\psi^{-1}(A,R)$ and $\xi^{-1}\psi^{-1}(A,S)$, respectively. So we need 
only describe $\xi^*D$, $\xi^*\wt\Delta_1$ and $\xi^*\wt\Delta_2$ 
in a neighborhood 
of $\xi^{-1}\psi^{-1}(A,T)$ for each reducible node $T$ of $C$. Hence, we 
may use Lemma~\ref{propblows}, 
and need only describe the restrictions to $\wt X$ of the Cartier divisors on 
$Y_2$ corresponding to $D$, $\wt\Delta_1$ and $\wt\Delta_2$.

So, fix a reducible node $T$. Let $z_.=0$ and $z_{..}=0$ be local 
equations for the Cartier divisors $C_{m_T}$ and $C_{n_T}$ of $\C$ at $T$, 
respectively. We may assume 
$a=0$, $b=0$ and $c=0$ are local equations at $A$ 
of the strict transforms to $\wt\C^2$ of 
$C_{r_{T,1}}\times C_{s_{T,1}}$, $C_{r_{T,2}}\times C_{s_{T,2}}$ and 
$C_{r_{T,3}}\times C_{s_{T,3}}$, respectively.

The divisor $D$ is a sum of the $D_{r,s,m}$, and 
$\xi^*D_{r,s,m}$ is nonzero in a neighborhood of $\xi^{-1}\psi^{-1}(A,T)$ if 
and only if $m$ is either $m_T$ or $n_T$ and 
$(r,s)$ is equal to $(i,k)$, $(i,l)$ or $(j,k)$, or equivalently, to 
$(r_{T,1},s_{T,1})$, $(r_{T,2},s_{T,2})$ or $(r_{T,3},s_{T,3})$. The six cases, 
\begin{align*}
&(r_{T,1},s_{T,1},m_T),(r_{T,2},s_{T,2},m_T),(r_{T,3},s_{T,3},m_T),\\
&(r_{T,1},s_{T,1},n_T),(r_{T,2},s_{T,2},n_T),(r_{T,3},s_{T,3},n_T),
\end{align*}
correspond exactly to the six cases in the heading of the first table of 
Lemma~\ref{propblows}, in the same ordering, from which follows that 
the coefficients of $\wh C_{m_T}$, $\wh C_{n_T}$, $E_{T,m_T}$ and 
$E_{T,n_T}$ in the description of $\xi^*D$ 
are exactly those prescribed by \eqref{wm} and \eqref{wmT}.  

Furthermore, as observed above, $\xi^*\wt\Delta_1$ intersects $X$ only along 
$\xi^{-1}\psi^{-1}(A,R)$.
So, assume $T=R$. 
The equations at $(A,T)$ 
of the strict transform of $\Delta_1$ to $\wt\C^2\times_B\C$ depend 
on the choice of the sequence $(r_{T,1},s_{T,1}),(r_{T,2},s_{T,2}),(r_{T,3},s_{T,3})$. 
There are six possible sequences:
\begin{align*}
&(i,k),(i,l),(j,k)\\
&(i,l),(i,k),(j,k)\\
&(i,k),(j,k),(i,l)\\
&(i,l),(j,k),(i,k)\\
&(j,k),(i,k),(i,l)\\
&(j,k),(i,l),(i,k).
\end{align*}
There are also two cases: $m_R=i$ and $n_R=j$ or $m_R=j$ and $n_R=i$. If 
$m_R=i$ and $n_R=j$, the equations at $(A,T)$ 
of the strict transform of $\Delta_1$ to 
$\wt\C^2\times_B\C$ corresponding to each of the six sequences above are those 
on the headings of the last three columns of the second table of 
Lemma \ref{propblows}, each set of equations being repeated twice, more 
precisely,
\begin{align*}
&z_.=ab,\, z_{..}=c\\
&z_.=ab,\, z_{..}=c\\
&z_.=ac,\, z_{..}=b\\
&z_.=ac,\, z_{..}=b\\
&z_.=bc,\, z_{..}=a\\
&z_.=bc,\, z_{..}=a.
\end{align*}
On the other hand, if $m_R=j$ and $n_R=i$ we obtain the headings of 
the first three columns, just exchanging $z_.$ and $z_{..}$ in the above 
sequence of equations. The reasoning behind these conclusions is laid out 
right before the statement of Lemma \ref{propblows}. Now, it is just a 
matter of using the entries of the second table of 
Lemma \ref{propblows} to conclude that $\xi^*\wt\Delta_1$ is of the stated 
form, with the coefficients of $E_{R,i}$ and $E_{R,j}$ as 
prescribed by \eqref{wmR}.

The same proof works, \emph{mutatis mutandis}, to describe $\xi^*\wt\Delta_2$.
\end{proof}

\section{Admissible invertible sheaves}\label{5}

If $E$ is a chain of rational curves and $\L$ is an invertible sheaf on 
$E$, then $\L$ is determined by its restrictions to the irreducible 
components of $E$, and thus by its multidegree. In particular, 
$\L\cong\O_E$ if and only if $\deg(\L|_F)=0$ for each component $F\subseteq E$.

\begin{Lem}\label{chainh1h0} Let $E$ be a chain of rational curves of length 
$n$. Let $E_1$ and $E_n$ denote the extreme curves. 
Let $\L$ be an invertible sheaf on $E$.
Then the following statements hold:
\begin{enumerate}
\item $\deg(\L|_F)\geq -1$ for every subchain $F\subseteq E$ if and
  only if $h^1(E,\L)=0$.
\item $\deg(\L|_F)\leq 1$ for every subchain $F\subseteq E$ if and
  only if
$$
h^0(E,\L(-P-Q))=0
$$
for any two points $P\in E_1$ and $Q\in E_n$ on the nonsingular locus of $E$.
\end{enumerate}
\end{Lem}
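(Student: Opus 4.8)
The plan is to prove both statements by induction on the length $n$ of the chain, using the fact that a chain decomposes into a shorter subchain plus one extreme component, glued at a single node. The two statements are ``dual'' to each other (one concerns $h^1$, the other $h^0$ of a twist by two boundary points), so I expect the same inductive machinery to handle both, with the degree bounds $\geq -1$ and $\leq 1$ playing symmetric roles.

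For Statement 1, the key tool is the normalization sequence at a node. If I write $E=E'\cup E_n$, where $E'$ is the subchain on the first $n-1$ components and $E_n$ is the last extreme curve meeting $E'$ at a single point $P$, then there is a short exact sequence
\begin{equation*}
0\lra\L\lra\L|_{E'}\oplus\L|_{E_n}\lra\L|_P\lra 0.
\end{equation*}
Taking cohomology, I get a long exact sequence relating $h^1(E,\L)$ to $h^1(E',\L|_{E'})$ and $h^1(E_n,\L|_{E_n})$. Since $E_n\cong\IP^1_K$, I have $h^1(E_n,\L|_{E_n})=0$ precisely when $\deg(\L|_{E_n})\geq -1$. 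The forward direction is then immediate: if every subchain has degree $\geq -1$, then in particular $E'$ (by induction) and $E_n$ contribute no $h^1$, and the connecting map $H^0(\L|_{E'})\oplus H^0(\L|_{E_n})\to H^0(\L|_P)$ must be shown surjective. For the converse, if some subchain $F$ has $\deg(\L|_F)\leq -2$, I would exhibit nonzero $H^1$ by restricting to $F$ (which is itself a chain) and using that $H^1(F,\L|_F)\neq 0$ pushes forward, or equivalently by running the surjectivity obstruction in reverse.

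For Statement 2, I would argue analogously, but it is cleanest to reduce it to Statement 1 by Serre duality rather than redo the induction. On a nodal chain $E$ the dualizing sheaf $\omega_E$ restricts on each internal component to $\O(0)$ and on each extreme component $E_1$, $E_n$ to $\O(-1)$; more usefully, $\omega_E(P+Q)$ for $P\in E_1$, $Q\in E_n$ nonsingular restricts to degree $0$ on the extreme components and degree $0$ on the internal ones as well, so $\omega_E(P+Q)\cong\O_E$. Hence $h^0(E,\L(-P-Q))=h^0(E,\L\ox\omega_E\ox\omega_E^{-1}(-P-Q))$, and Serre duality gives $h^0(E,\L(-P-Q))=h^1(E,\L^{-1})$. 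The condition $\deg(\L|_F)\leq 1$ for every subchain is exactly $\deg(\L^{-1}|_F)\geq -1$ for every subchain, so Statement 2 follows directly from Statement 1 applied to $\L^{-1}$.

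The main obstacle I anticipate is not the cohomological bookkeeping but the handling of \emph{all} subchains, not merely the two obtained by peeling off an extreme component. In the inductive step the degree hypothesis must be checked on every subchain of $E'$ and on $E_n$, and I must be careful that a ``bad'' subchain lying in the interior of $E$ (straddling the gluing node $P$) is correctly detected. The subtle point is the surjectivity of the connecting map onto $H^0(\L|_P)$: this can fail exactly when the relevant section of $\L|_{E_n}$ cannot be extended, which is governed by the degree on the subchain containing $E_n$. I would therefore phrase the induction carefully so that the hypothesis ``$\deg(\L|_F)\geq -1$ for every subchain $F$'' is applied to subchains of $E$ that contain the node $P$, ensuring the gluing condition is met; verifying that the two reductions (peeling $E_n$ and the Serre-duality identification $\omega_E(P+Q)\cong\O_E$) are compatible is where I expect to spend the most care.
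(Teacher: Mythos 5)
Your proposal is correct, and on Statement 1 it takes a genuinely different route from the paper's. For the forward direction the paper peels off an extreme component via $0\to\L|_{E_1}(-N)\to\L\to\L|_{E'_1}\to 0$ and, in the troublesome case where both extreme components have negative degree, locates an interior component $E_i$ with $\deg(\L|_{E_i})\geq 1$ and splits the chain there by $0\to\L|_{E_i}(-N_1-N_2)\to\L\to\L|_{F_1}\oplus\L|_{F_2}\to 0$. Your Mayer--Vietoris sequence $0\to\L\to\L|_{E'}\oplus\L|_{E_n}\to\L|_P\to 0$ instead concentrates the difficulty in the surjectivity of evaluation at the node, and the subtlety you flag is real but resolves exactly as you predict: if $\deg(\L|_{E_n})\geq 0$ surjectivity comes from $\L|_{E_n}$ being globally generated, while if $\deg(\L|_{E_n})=-1$ one has $H^0(\L|_{E_n})=0$ and surjectivity of $H^0(\L|_{E'})\to\L|_P$ amounts to $h^1(E',\L|_{E'}(-P))=0$ (note $P$ is a smooth point of $E'$, as $E_n$ meets only $E_{n-1}$), which follows from the inductive hypothesis applied to $\L|_{E'}(-P)$ because any subchain $F\subseteq E'$ containing $E_{n-1}$ satisfies $\deg(\L|_F)=\deg(\L|_{F\cup E_n})+1\geq 0$. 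Your converse is in fact simpler than the paper's: since a chain has arithmetic genus $0$, $\chi(\L|_F)=\deg(\L|_F)+1\leq -1$ forces $h^1(F,\L|_F)\neq 0$, and $H^1(E,\L)\to H^1(F,\L|_F)$ is surjective because $H^2$ vanishes on a curve; the paper instead deduces $\deg(\L|_F)\geq-1$ for proper subchains by induction and rules out $\deg(\L)=-2$ by identifying $\L$ with the dualizing sheaf, a detour your Riemann--Roch argument avoids since it handles $F=E$ uniformly. For Statement 2 you and the paper do exactly the same thing: observe $\O_E(-P-Q)\cong\omega_E$ (checked on multidegrees, which determine line bundles on chains) and apply Serre duality to get $h^0(E,\L(-P-Q))=h^1(E,\L^{-1})$, reducing to Statement 1 for $\L^{-1}$.
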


\begin{proof}  Let $E_1,\dots,E_n$ be the irreducible components of $E$, 
ordered in such a way that $\# E_i\cap E_{i+1}=1$ for $i=1,\dots,n-1$. 
We prove the statements by induction on $n$. If $n=1$ all the 
statements follow from the knowledge of the cohomology of 
the sheaves $\O_{\IP^1_K}(j)$. 

Suppose $n>1$. We show Statement 1. Assume that $\deg(\L|_F)\geq -1$ for every 
subchain $F\subseteq E$. Consider the natural exact sequence
$$
0\to\L|_{E_1}(-N)\to\L\to\L|_{E'_1}\to0,
$$
where $E'_1:=\ol{E-E_1}$ and $N$ is the unique point of $E_1\cap E'_1$. 
By induction, $h^1(E'_1,\L|_{E'_1})=0$. If $\deg\L|_{E_1}\geq 0$ then 
$h^1(E_1,\L|_{E_1}(-N))=0$ as well, and hence
$h^1(E,\L)=0$ from the long exact sequence in cohomology.

Suppose now that $\deg\L|_{E_1}<0$. If $\deg\L|_{E_n}\geq 0$, we 
invert the ordering of the 
chain, and proceed as above. Thus we may suppose $\deg\L|_{E_n}<0$ as well. 
Since $\deg\L|_E\geq -1$, there is $i\in\{2,\dots,n-1\}$ such that
$\deg\L|_{E_i}\geq 1$. Let $F_1:=E_1\cup\cdots\cup E_{i-1}$ and 
$F_2:=E_{i+1}\cup\cdots\cup E_n$.
Consider the natural exact sequence
$$
0\to\L|_{E_i}(-N_1-N_2)\to\L\to\L|_{F_1}\oplus\L|_{F_2}\to0,
$$
where $N_1$ and $N_2$ are the two points of intersection of $E_i$ with 
$E'_i:=\ol{E-E_i}$. By induction,
$h^1(F_1,\L|_{F_1})=h^1(F_2,\L|_{F_2})=0$. Also, since 
$\deg \L|_{E_i}\geq 1$, we have $h^1(E_i,\L|_{E_i}(-N_1-N_2))=0$, and thus 
it follows from the long
exact sequence in cohomology that $h^1(E,\L)=0$ as well.

Assume now that $h^1(E,\L)=0$. Then $h^1(F,\L|_F)=0$ for every subchain
$F\subseteq E$. By induction, $\deg(\L|_F)\geq -1$ for every proper
subchain $F\subsetneq E$. Since $E$ is the union of two proper
subchains, it follows that $\deg(\L)\geq -2$. Assume by contradiction
that $\deg(\L)=-2$. Then $\deg(\L|_F)=-1$ for every proper subchain 
$F\subsetneq E$ containing $E_1$ or $E_n$. It follows that 
$$
\deg(\L|_{E_i})=\begin{cases}0&\text{if $1<i<n$},\\
-1&\text{otherwise.}\end{cases}
$$
But then $\L$ is the dualizing sheaf of $E$, and thus $h^1(E,\L)=1$,
reaching a contradiction. The proof of Statement 1 is complete.

Statement 2 is proved in a similar way. Alternatively, it is enough 
to observe that $\O_E(-P-Q)$ is the dualizing sheaf of $E$, 
and thus, by Serre Duality,
$$
h^0(E,\L(-P-Q))=h^1(E,\L^{-1}).
$$
So Statement 2 follows from 1.
\end{proof}

Let $\phi\:\X\to S$ be a family of connected curves. 
An $S$-flat coherent sheaf  
$\I$ on $\X$ is said to be a \emph{relatively torsion-free, rank-1 sheaf} 
(of \emph{relative degree} $d$) on $\X/S$ if the restriction of $\I$ to 
each geometric fiber of $\phi$ is torsion-free, rank-1 (of degree $d$).  
Let $\E$ be a locally free sheaf on $\X$ of constant rank and 
$\I$ a relatively torsion-free, rank-1 sheaf on $\X/S$. Let $\sigma\:S\to\X$ 
be a section of $\phi$ through its smooth locus. 
We say that $\I$ 
is \emph{semistable} (resp.~\emph{stable}, resp.~\emph{$\sigma$-quasistable}) 
\emph{with respect to $\E$} if, for every 
geometric point $s$ of $S$, 
\begin{enumerate}
\item $\chi(I_s\ox E_s)=0$,
\item $\chi((I_s)_Y\ox E_s|_Y)\geq 0$ for every proper subcurve 
$Y\subset X_s$ (resp.~with equality never, resp. 
with equality only if $\sigma(s)\not\in Y$),
\end{enumerate}
where $I_s$ and $E_s$ denote the restrictions of 
$\I$ and $\E$ to the fiber $X_s$ of $\phi$ over $s$. 
Notice that it is enough to 
check Property 2 above for connected subcurves $Y$.

The above notions of stability coincide with those in Section 
\ref{sect2.2} for an appropriate choice of component and polarization. In 
fact, let $\omega$ be the dualizing sheaf of $X_s$ and $Z$ the component 
of $X_s$ containing $\sigma(s)$. If $F$ is a 
vector bundle on $X_s$ with $\chi(I_s\ox F)=0$, there is a a polarization 
$\ul e$ for $X_s$ satisfying
\begin{equation}\label{eE}
e_Y=-\frac{\deg(F|_Y)}{\text{rk}(F)}+\frac{\deg(\omega|_Y)}{2}
\end{equation}
for every subcurve $Y$ of $X_s$. Then 
$\ul e$ has degree $\deg(I_s)$ and the sheaf 
$I_s$ is semistable (resp.~stable, resp.~$\sigma|_s$-quasistable) 
with respect to $F$ if and only if $I_s$ is semistable (resp.~stable, 
resp.~$Z$-quasistable) with respect to $\ul e$. 
Conversely, given a polarization $\ul e$ of $X_s$ of degree $\deg(I_s)$, 
there is a vector bundle $F$ on $X_s$ with $\chi(I_s\ox F)=0$ 
such that \eqref{eE} holds for every subcurve $Y$ of $X_s$; 
see \cite{MeVi}, Rmk.~1.16. 

Let $\psi\:\Y\to \X$ be a proper 
morphism such that the composition $\rho:=\phi\circ\psi$ is another family 
of curves. We say that $\psi$ is a \emph{semistable modification} of $\phi$ 
if for each geometric point $s$ of $S$ there are 
a collection of nodes $\N$ 
of the fiber $X_s$ of $\phi$ over $s$ and a map
$\eta\:\N\to\mathbb N$ such that the induced map $Y_s\to X_s$, where 
$Y_s$ is the fiber of $\rho$ over $s$, is 
$X_s$-isomorphic to $\mu_{\eta}\:(X_s)_{\eta}\to X_s$.

Assume $\psi$ is a semistable modification of $\phi$. Let $\L$ be an 
invertible sheaf on $\Y$.  We say that $\L$ is 
$\psi$-\emph{admissible} (resp.~\emph{strongly $\psi$-admissible}, 
resp.~$\psi$-\emph{invertible}) \emph{at} a given $s\in S$ 
if the restriction of $\L$ to every chain of rational curves on $\Y$ 
over a node of a geometric fiber of $\phi$ over $s$ has degree $-1$, $0$ or $1$ 
(resp.~$-1$ or $0$, resp.~$0$). We say that $\L$ is 
$\psi$-\emph{admissible} (resp.~\emph{strongly $\psi$-admissible}, 
resp.~$\psi$-\emph{invertible}) if $\L$ is so at every $s\in S$. 
Notice that, if $\L$ is strongly $\psi$-admissible, for every 
chain of rational curves on $\Y$ over a node of a geometric fiber of 
$\phi$, the degree of $\L$ on each of the components of the chain is 0 but 
for at most one component where the degree is $-1$.

\begin{Prop}\label{famchain}
Let $\phi\:\X\to S$ be a family of connected curves, $\psi\:\Y\to\X$ a 
semistable modification of $\phi$ and $\rho:=\phi\psi$. 
Let $\L$ be an invertible sheaf on $\Y$ of relative degree $d$ over $S$. 
Then the following statements hold:
\begin{enumerate}
\item The points $s$ of $S$ at which $\L$ is
  $\psi$-admissible (resp.~strongly $\psi$-admissible,
  resp.~$\psi$-invertible) form an open subset of $S$.
\item
$\L$ is $\psi$-admissible if and only if 
$\psi_*\L$ is a relatively torsion-free, rank-$1$ sheaf on $\X/S$ of 
relative degree $d$, whose formation commutes with base change. In
this case, $R^1\psi_*\L=0$.
\item
Let $\sigma\:S\to\Y$ be a section through the smooth locus of $\rho$ 
such that $\sigma(s)$ is not on any component of $Y_s$ contracted by
$\psi_s$ for any geometric point $s$ of $S$.
Let $\E$ be a vector bundle on $\X$. Then $\L$ is 
semistable (resp.~$\sigma$-quasistable, resp.~stable) 
with respect to $\psi^*\E$ if and only if 
$\L$ is $\psi$-admissible (resp.~strongly $\psi$-admissible,
resp.~$\psi$-invertible) 
and $\psi_*\L$ is 
semistable (resp.~$(\psi\sigma)$-quasistable, resp.~stable) 
with respect to $\E$.
\end{enumerate}
\end{Prop}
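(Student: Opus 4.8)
The plan is to treat the whole proposition fiberwise over $S$, using Lemma~\ref{chainh1h0} as the engine, proving Statement~2 first, deducing Statement~1 by semicontinuity, and then handling Statement~3 by a subcurve Euler-characteristic comparison. The first thing I would record is the translation of the admissibility notions into the cohomological language of Lemma~\ref{chainh1h0}. Fix a geometric point $s$ of $S$; then $\psi_s\:Y_s\to X_s$ contracts, over each node of $X_s$, a chain $E$ of rational curves to a point, and is an isomorphism elsewhere. Since the phrase ``every chain of rational curves on $\Y$ over a node'' includes every subchain, $\L$ is $\psi$-admissible at $s$ precisely when $-1\le\deg(\L|_F)\le 1$ for every subchain $F$ of every such $E$, which by Lemma~\ref{chainh1h0} is equivalent to $h^1(E,\L|_E)=0$ together with $h^0(E,\L|_E(-P-Q))=0$ for the nonsingular points $P,Q$ on the extreme components of $E$ where it meets the rest of $Y_s$. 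Strong admissibility replaces the upper bound by $\deg\le 0$ and $\psi$-invertibility by $\deg=0$, i.e.\ $\L|_E\cong\O_E$.

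For Statement~2 I would argue fiberwise and then globalize. Since the fibers of $\psi_s$ over nodes are exactly the chains $E$, the theorem on formal functions identifies the stalk of $R^1\psi_*\L$ at a node with an inverse limit of $H^1$ of thickenings of $E$; the vanishing $h^1(E,\L|_E)=0$ propagates through the thickenings, whose graded pieces are line bundles on $E$ of no smaller degree, giving $R^1(\psi_s)_*\L_s=0$. Granting this, $\chi(X_s,(\psi_s)_*\L_s)=\chi(Y_s,\L_s)$, and since $\psi_s$ contracts genus-$0$ trees one has $\chi(\O_{Y_s})=\chi(\O_{X_s})$, so $(\psi_s)_*\L_s$ has degree $d$. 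The local type at a node is then read off from the two vanishings: $h^1=0$ forces $(\psi_s)_*\L_s$ to have no embedded points, while $h^0(E,\L|_E(-P-Q))=0$ forces its stalk to have rank exactly $1$ rather than $2$; conversely a subchain of degree $\ge 2$ or $\le -2$ produces, respectively, a rank jump or a nonzero $R^1$, destroying torsion-free rank $1$. With $R^1(\psi_s)_*\L_s=0$ for all $s$, cohomology and base change gives $R^1\psi_*\L=0$, the $S$-flatness of $\psi_*\L$, and the commutation of its formation with base change; the converse is immediate, since base-change commutation makes the fibers of $\psi_*\L$ equal to $(\psi_s)_*\L_s$, forcing the fiberwise conditions. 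I expect this to be the main obstacle: pinning down the local type of $\psi_*\L$ at a node, and checking that the $R^1$-vanishing survives in families.

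Statement~1 then follows quickly. The admissible locus is the set of $s$ with $R^1(\psi_s)_*\L_s=0$ and the rank-$1$ condition, both open by semicontinuity; equivalently, by Statement~2 it is the open locus where $\psi_*\L$ is $S$-flat with formation commuting with base change. The $\psi$-invertible locus is where $\psi_*\L$ is locally free, again open, and the one-sided bound defining strong admissibility is, by an argument entirely parallel to Lemma~\ref{chainh1h0}, also a cohomology-vanishing condition, hence open.

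For Statement~3 I would compare Euler characteristics on subcurves. By the projection formula and $R^1\psi_*\L=0$ one gets $R\psi_*(\L\otimes\psi^*\E)=\psi_*\L\otimes\E$, so $\chi(Y_s,\L_s\otimes\psi_s^*\E_s)=\chi(X_s,(\psi_s)_*\L_s\otimes\E_s)$ and the two normalizations $\chi=0$ agree. For a proper subchain $F\subseteq E$ one has $\#(F\cap\overline{Y_s-F})=2$ and $\psi_s^*\E$ trivial of rank $r=\mathrm{rk}\,\E$ on $F$, so $\chi(F,\L_s\otimes\psi_s^*\E_s|_F)=r(\deg(\L|_F)+1)$; the inequality for $F$ gives $\deg(\L|_F)\ge -1$, and applied to $\overline{Y_s-F}$, which contains $\sigma(s)$, gives $\deg(\L|_F)\le 1$, the strict versions yielding $\deg\le 0$ and $\deg=0$. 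Thus semistability (resp.\ $\sigma$-quasistability, resp.\ stability) of $\L$ forces $\psi$-admissibility (resp.\ strong admissibility, resp.\ $\psi$-invertibility). For the remaining subcurves I would check that each $Y\subseteq X_s$ lifts to $W\subseteq Y_s$ with $\psi_s(W)=Y$ and $(\psi_s|_W)_*(\L|_W)=((\psi_s)_*\L_s)_Y$, so that the inequality for $W$ matches that for $Y$; conversely, under admissibility the extremal subcurves of $Y_s$ are accounted for either by subchains or by such lifts, making the two families of inequalities equivalent. Matching the equality cases, with $\psi\sigma$ a section through the smooth locus of $\phi$ since $\sigma$ avoids contracted components, then yields the three equivalences at once.
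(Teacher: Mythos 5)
Your overall architecture does coincide with the paper's: fiberwise reduction to Lemma~\ref{chainh1h0}, semicontinuity for Statement~1, base-change arguments for Statement~2, and Euler-characteristic comparisons on subcurves for Statement~3. For Statements~1 and~2 the plan is sound in outline, though with two local slips. First, in your formal-functions argument the graded pieces $\I_E^n/\I_E^{n+1}\ox\L$ are skyscrapers supported on $E\cap\ol{Y_s-E}$, not line bundles on $E$ ``of no smaller degree''; the vanishing still comes out, but not for the reason you give, and the paper avoids formal functions altogether, deducing $R^1\psi_*\L=R^1\psi_*(\L|_E)$ from the two short exact sequences relating $\L$, $\L|_E$ and $\L|_{\wt X}$, and obtaining openness and base-change commutation by twisting with a relatively ample sheaf so that semicontinuity applies directly to $\rho_*$ (with the $\L^{\ox n}$ trick handling the upper bounds and hence strong admissibility and invertibility uniformly). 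Second, you have the two cohomological conditions in swapped roles: it is $h^0(E,\L|_E(-P-Q))=0$ (degree $\le 1$ on subchains) that rules out torsion, i.e.\ embedded points, in $\psi_*\L$, while $h^1(E,\L|_E)=0$ (degree $\ge -1$) is what gives $R^1\psi_{s*}\L_s=0$ and hence preservation of the degree $d$.

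The genuine gap is in Statement~3. The step you describe as ``I would check that each $Y\subseteq X_s$ lifts to $W\subseteq Y_s$ with $(\psi_s|_W)_*(\L|_W)=((\psi_s)_*\L_s)_Y$'' is the technical heart of the paper's proof (its Claim), and no lift chosen independently of $\L$ works: the correct subcurve $Z$ must truncate each chain over a node of $W\cap W'$ at a point determined by the distribution of the degrees of $\L$ along its subchains --- a chain of total degree $1$ is omitted entirely, one of degree $-1$ is included entirely, and one of degree $0$ is cut at a position depending on where the positive-degree components sit --- and verifying the pushforward identity requires the vanishings $h^0(G_i,\L|_{G_i}(-M_i))=0$ and $h^1(\wh G_i,\L|_{\wh G_i}(-B_i-N_i))=0$ for the resulting pieces. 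Moreover, your ``the extremal subcurves of $Y_s$ are accounted for either by subchains or by such lifts'' does not suffice for the ``if'' direction: an arbitrary connected $U\subseteq Y_s$ is neither a subchain nor a lift, and the needed inequality $\chi(\L|_U\ox\psi^*\E|_U)\geq\chi((\psi_*\L)_W\ox\E|_W)$ rests on an additional minimality property of the same $Z$, namely $\deg(\L|_U)\geq\deg(\L|_Z)$ for every connected $U$ squeezed between $\ol{Y_s-\psi_s^{-1}(W')}$ and $\psi_s^{-1}(W)$, combined with the comparison of $U$ with its saturation $\wh U$ and careful equality bookkeeping for the quasistable and stable cases. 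That a single subcurve $Z$ simultaneously computes $(\psi_*\L)_W$ and minimizes the degree is the missing idea; your plan, which locates the main obstacle in the local analysis of Statement~2 rather than here, stalls exactly at this point.
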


\begin{proof} All of the statements and hypotheses 
are local with respect to the \'etale topology of $S$. So we may
assume $S$ is Noetherian and that there is an 
invertible sheaf $\A$ on $\X$ that is relatively ample over $S$.  Let 
$\widehat{\A}:=\psi^*\A$. 

For each geometric point $s$ of $S$, let $Y_s:=\rho^{-1}(s)$ and 
$\psi_s:=\psi|_{Y_s}\:Y_s\to X_s$, with $X_s:=\phi^{-1}(s)$. 

We prove  Statement 1 first. 
For each geometric point $s$ of $S$, let $E_s$ be the subcurve of
$Y_s$ which is the union of all the components contracted by $\psi_s$,
and let $\wt X_s$ be the partial
normalization of $X_s$ obtained as the union of the remaining
components.
Since $\psi|_{\widetilde X_s}\:\widetilde X_s\to X_s$ is a 
finite map, it follows that $\widehat\A|_{\wt X_s}$ is ample, and thus 
$h^1(\wt X_s,(\L\ox\widehat{\A}^{\ox m_s})|_{\wt X_s}(-\sum P_i))=0$ 
for every large enough integer $m_s$, where the sum runs over all the branch points of 
$\wt  X_s$ above $X_s$.  Since $S$ is Noetherian, a large enough
integer works for all $s$, that is, for every $m>>0$,
\begin{equation}\label{hm>0}
h^1(\wt X_s,(\L\ox\widehat{\A}^{\ox m})|_{\wt X_s}(-\sum
P_i))=0\quad\text{for every geometric point $s$ of $S$}.
\end{equation}

So, for each large enough integer $m$ such that \eqref{hm>0}
holds, it follows from the long exact sequence in 
cohomology associated to the natural exact sequence
\begin{equation}\label{exseq}
0\to(\L\ox\widehat{\A}^{\ox m})|_{\wt X_s}(-\textstyle\sum P_i)\to
\L_s\ox\widehat{\A}_s^{\ox m}\to(\L\ox\widehat{\A}^{\ox m})|_{E_s}\to 0
\end{equation}
that 
\begin{equation}\label{h1E00}
h^1(Y_s,\L_s\ox\widehat{\A}_s^{\ox m})=
h^1(E_s,\L\ox\widehat{\A}^{\ox m}|_{E_s}),
\end{equation}
where $\L_s$ and $\widehat{\A}_s$ are the 
restrictions of $\L$ and $\widehat{\A}$ to $Y_s$. 
On the other hand, since $\widehat{\A}$ is a pullback from $\X$, it follows
that 
\begin{equation}\label{h1E0}
h^1(E_s,\L\ox\widehat{\A}^{\ox m}|_{E_s}) =
\sum h^1(F,\L|_F)\quad
\text{for every integer $m$},
\end{equation}
where the sum is over all the maximal chains $F$ of rational curves on
$Y_s$ contracted by $\psi_s$. Putting together \eqref{h1E00} and
\eqref{h1E0}, it follows now from Lemma~\ref{chainh1h0}
that 
\begin{equation}\label{adm=0}
h^1(Y_s,\L_s\ox\widehat{\A}_s^{\ox m})=0
\end{equation}
if and only if $\deg(\L|_F)\geq -1$ for every chain $F$ of
rational curves on $Y_s$ contracted by $\psi_s$. This is the case if
$\L_s$ is $\psi_s$-admissible.

It follows from semicontinuity of cohomology that the geometric points
$s$ of $S$ such that $\L|_{Y_s}$ has degree at least $-1$ on every chain
of rational curves of $Y_s$ contracted by $\psi_s$ 
form an open subset $S_1$ of $S$. Likewise, for
each integer $n$, the geometric points $s$ of $S$ such that
$\L^{\otimes n}|_{Y_s}$ has degree at least $-1$ on every chain of
rational curves of $Y_s$ contracted by $\psi_s$ form an open subset
$S_n$ of $S$. Then $S_1\cap S_{-1}$ parameterizes those $s$ for which
$\L|_{Y_s}$ is $\psi_s$-semistable, $S_1\cap S_{-2}$ parameterizes
those $s$ for which $\L|_{Y_s}$ is strongly $\psi_s$-semistable, and
$S_2\cap S_{-2}$ parameterizes those $s$ for which $\L|_{Y_s}$ is
$\psi_s$-invertible.

We prove Statement 2 now. 
Assume for the moment that $\L$ is $\psi$-admissible. 
To show that $\psi_*\L$ is flat over $S$, 
we need only show that $\phi_*(\psi_*\L\ox\A^{\ox m})$ is locally free for
each $m>>0$. By the projection formula, we need only show that
$\rho_*(\L\ox\widehat{\A}^{\ox m})$ is locally free for each $m>>0$. 
This follows from what we have already proved: For each large enough
integer $m$ such that \eqref{hm>0} holds, also 
\eqref{adm=0} holds for each geometric 
point $s$ of $S$, because $\L$ is $\psi$-admissible.

Furthermore, taking the long exact sequence in higher direct images of $\psi_s$ for 
the exact sequence \eqref{exseq} with $m=0$, using \eqref{h1E0} and 
that $\psi_s|_{\wt X_s}\:\wt X_s\to X_s$ is a finite map, it follows 
that $R^1\psi_{s*}(\L_s)=0$ for every geometric point $s$ of $S$. 
Since the fibers of $\psi$ have at most 
dimension 1, the formation of $R^1\psi_*(\L)$ commutes with base change, and 
thus $R^1\psi_*(\L)=0$.

Another consequence of \eqref{adm=0} holding for each geometric point
$s$ of $S$ is that the formation of 
$\rho_*(\L\ox\widehat{\A}^{\ox m})$ commutes with base
change for $m>>0$. 
We claim now that the base change map 
$\lambda_\X^*\psi_*\L\to\psi_{T*}\lambda_\Y^*\L$ is an
isomorphism for each Cartesian diagram of maps
$$
\begin{CD}
\Y_T @>\lambda_\Y >> \Y\\
@V\psi_TVV @V\psi VV\\
\X_T @>\lambda_\X >> \X\\
@V\phi_TVV @V\phi VV\\
T @>\lambda >> S.
\end{CD}
$$
Indeed, since $\A$ is relatively ample over $S$, 
it is enough to check that the induced map
\begin{equation}\label{bch}
\phi_{T*}(\lambda_\X^*\psi_*\L\ox\lambda_\X^*\A^{\ox m})\lra
\phi_{T*}(\psi_{T*}\lambda_\Y^*\L\ox\lambda_\X^*\A^{\ox m})
\end{equation}
is an isomorphism for $m>>0$. But, by the projection formula,
the right-hand side is simply 
$\phi_{T*}\psi_{T*}\lambda_\Y^*(\L\ox\wh\A^{\ox m})$.
Also, since $\psi_*\L$ is $S$-flat, 
the left-hand side is $\lambda^*\phi_*(\psi_*(\L)\ox\A^{\ox m})$ for $m>>0$, 
whence equal to 
$\lambda^*\phi_*\psi_*(\L\ox\widehat{\A}^{\ox m})$ by the projection formula. 
So, since the formation of $\rho_*(\L\ox\widehat{\A}^{\ox m})$ 
commutes with base
change for $m>>0$, 
it follows that \eqref{bch} is an isomorphism for $m>>0$, as asserted.

To prove the remainder of Statement 2 we 
may now assume that $S$ is a geometric point. Let $X:=\X$ and 
$Y:=\Y$.  We need only show that $\psi_*\L$ is a torsion-free, rank-1
sheaf of degree $d$ on $X$ if and only if $\L$ is $\psi$-admissible. 
Again, let $E$ be the union of the components of $Y$ contracted by $\psi_s$,
and let $\wt X$ be the union of the remaining
components. Taking higher direct images under $\psi$ in the 
natural exact sequences
\begin{align*}
&0\to\L|_{\wt X}(-\textstyle\sum P_i)\to\L\to\L|_E\to0,\\
&0\to\L|_E(-\textstyle\sum P_i)\to\L\to\L|_{\wt X}\to0,
\end{align*}
where the sums run over the intersection points 
$P_i$ of $\widetilde X$ and $E$, and using that $\psi|_{\wt X}$ is a
finite map, we get 
\begin{equation}\label{RRR}
R^1\psi_*\L=R^1\psi_*\L|_E
\end{equation}
and the exact sequence
$$
0\to\psi_*\L|_E(-\textstyle\sum P_i)\to\psi_*\L\to\psi_*\L|_{\wt X}\to
R^1\psi_*\L|_E(-\textstyle\sum P_i)\to R^1\psi_*\L\to 0 .
$$
Since $\psi|_{\widetilde X}$ is also birational, $\psi_*\L|_{\widetilde  X}$ is a torsion-free, 
rank-1 sheaf of degree $\deg\L|_{\widetilde  X}+e$, where $e$ 
is the number of points of $X$ 
desingularized in $\wt X$, thus equal to the number of maximal chains of 
rational curves on $Y$ contracted by $\psi$. Since 
$\psi_*\L|_E(-\textstyle\sum P_i)$ is supported at finitely many
points, it follows that $\psi_*\L$ is
torsion-free if and only if $h^0(E,\L|_E(-\sum P_i)=0$. The latter holds if
and only if the degree of $\L$ on each chain of rational curves in $E$
is at most $1$, by Lemma \ref{chainh1h0}. Furthermore, if the latter
holds, then $R^1\psi_*\L|_E(-\textstyle\sum P_i)$ has length
$e-\deg\L|_E$ by the Riemann--Roch Theorem. Since 
$\deg\L|_{\wt  X}+\deg\L|_E=d$, it follows that $\deg\psi_*\L=d$ if
and only if $R^1\psi_*\L=0$. By \eqref{RRR}, the latter holds if and
only if $h^1(E,\L|_E)=0$, thus if and only if the degree 
of $\L$ on each chain of rational curves in $E$
is at least $-1$, by Lemma \ref{chainh1h0}. The proof of Statement 2
is complete.

We prove Statement 3 now. 
We may assume that $S$ is a geometric point. Let $X:=\X$ and 
$Y:=\Y$.  
Let $P\in Y$ be the image of the section $\sigma$. Since $\psi^*\E$ has 
degree 0 on every component of $Y$ contracted by $\psi$, and $P$ does not lie on any of these components, 
it follows from the definitions that a semistable (resp.~$P$-quasistable, 
resp.~stable) sheaf has degree $-1$, $0$ or $1$ 
(resp.~$-1$ or $0$, resp. $0$) on every chain of rational curves of
$Y$ contracted by $\psi$.

We may now assume that $\L$ is $\psi$-admissible. 
Let $W$ be any connected subcurve of $X$. 
Set $W':=\ol{X-W}$ and $\Delta_W:=W\cap W'$. 
Set $\delta:=\#\Delta_W$. Let $V_1:=\ol{Y-\psi^{-1}(W')}$ and 
$V_2:=\ol{Y-\psi^{-1}(W)}$. Let $F_1,\dots,F_r$ be the maximal chains of 
rational curves contained in $\psi^{-1}(\Delta_W)$. Then $0\leq r\leq\delta$. 

{\it Claim:} $(\psi_*\L)_W\cong\psi_*(\L|_Z)$ 
for a certain connected subcurve $Z\subseteq Y$ such that:
\begin{enumerate}
\item $V_1\subseteq Z\subseteq\psi^{-1}(W)$.
\item For each connected subcurve $U\subseteq Y$ 
such that $V_1\subseteq U\subseteq\psi^{-1}(W)$,
$$
\deg(\L|_U)\geq\deg(\L|_Z).
$$
\end{enumerate}
(Notice that Property 1 implies that $P\in Z$ if and only if $\psi(P)\in W$.) 

Indeed, if $W=X$, let $Z:=\psi^{-1}(W)$. Suppose $W\neq X$. Then $\delta>0$.  
Let $M_1,\dots,M_\delta$ be the points of intersection of $V_1$ 
with $V'_1:=\ol{Y-V_1}$ and $N_1,\dots,N_\delta$ those of $V_2$ with 
$V'_2:=\ol{Y-V_2}$. 

Write $F_i=F_{i,1}\cup\dots\cup F_{i,e_i}$, 
where $F_{i,j}\cap F_{i,j+1}\neq\emptyset$ for $j=1,\dots,e_i-1$ and 
$F_{i,1}$ intersects $V_1$. Up ro reordering the $M_i$ and $N_i$, 
we may assume that $F_{i,1}$ intersects $V_1$ 
at $M_i$ and $F_{i,e_i}$ intersects $V_2$ at $N_i$ for $i=1,\dots,r$. 
(Thus $M_i=N_i$ for $i=r+1,\dots,\delta$.) Up to 
reordering the $F_i$, we may also assume that there 
are nonnegative integers $u$ and $t$ with $u\leq t$ 
such that 
$$
\deg(\L|_{F_i})=\begin{cases}
1&\text{for $i=1,\dots,u$}\\
0&\text{for $i=u+1,\dots,t$}\\
-1&\text{for $i=t+1,\dots,r$.}
\end{cases}
$$

Up to reordering the $F_i$, we 
may assume there is an integer $b$ with $u\leq b\leq t$ such that, 
for each $i=u+1,\dots,t$, we have that $i>b$ if and only if 
$\deg(\L|_{F_{i,j}})=0$ for every $j$ or the largest integer $j$ such that 
$\deg(\L|_{F_{i,j}})\neq 0$ is such that $\deg(\L|_{F_{i,j}})=-1$. Set 
$G_i:=F_i$ for $i=b+1,\dots,r$. For each $i=u+1,\dots,b$, let 
$G_i:=F_{i,1}\cup\dots\cup F_{i,j-1}$, where $j$ is the largest integer such 
that $\deg(\L|_{F_{i,j}})=1$, let $\wh G_i:=\ol{F_i-G_i}$ and denote by 
$B_i$ the point of intersection of $G_i$ and $\wh G_i$. (Notice 
that $1< j\leq e_i$.) Let $B_i:=M_i$ and $\wh G_i:=F_i$ for $i=1,\dots,u$, and 
$B_i:=N_i$ for $i=b+1,\dots,\delta$.  

For $i=u+1,\dots,r$, since the degree of $\L|_{G_i}(B_i)$ on each subchain of 
$G_i$ is at most 1, it follows from Lemma~\ref{chainh1h0} that
\begin{equation}\label{MN3}
h^0(G_i,\L|_{G_i}(-M_i))=0\quad\text{for $i=u+1,\dots,r$}.
\end{equation}
Furthermore, for $i=1,\dots,b$, the total degree of $\L|_{\wh G_i}$ is 1; thus, 
by Lemma~\ref{chainh1h0} and the Riemann--Roch Theorem,
\begin{equation}\label{MN4}
h^1(\wh G_i,\L|_{\wh G_i}(-B_i-N_i))=0\quad\text{for $i=1,\dots,b$}.
\end{equation}

Set
$$
Z:=V_1\cup G_{u+1}\cup\dots\cup G_r
$$
and $Z':=\ol{Y-Z}$. Put $\Delta_Z:=Z\cap Z'$. Notice that 
$\Delta_Z=\{B_1,\dots,B_\delta\}$. Also, notice that $Z$ is connected, and 
$$
\deg(\L|_U)\geq\deg(\L|_Z)=\deg(\L|_{V_1})-(b-u)-(r-t)
$$
for each connected subcurve $U\subseteq Y$ such that 
$V_1\subseteq U\subseteq\psi^{-1}(W)$.

We have three natural exact sequences:
\begin{equation}\label{ex1}
0\to\L|_{Z'}(-\sum_{i=1}^\delta B_i)\to\L\to\L|_Z\to 0,
\end{equation}
\begin{equation}\label{ex2}
0\to\bigoplus_{i=1}^b\L|_{\wh G_i}(-B_i-N_i)\to\L|_{Z'}(-\sum_{i=1}^\delta B_i)
\to\L|_{V_2}(-\sum_{i=b+1}^\delta B_i)\to 0,
\end{equation}
\begin{equation}\label{ex3}
0\to\bigoplus_{i=u+1}^r\L|_{G_i}(-M_i)\to\L|_Z\to\L|_{V_1}\to0.
\end{equation}
Since $\L$ is $\psi$-admissible, so are 
$\L|_{V_1}$ with respect to $\psi|_{V_1}\:V_1\to W$ and $\L|_{V_2}$ with 
respect to $\psi|_{V_2}\:V_2\to W'$. Then $\psi_*(\L|_{V_1})$ is a 
torsion-free, rank-1 sheaf on $W$ and 
$R^1\psi_*(\L|_{V_2}(-\sum B_i))=0$ by Statement 1. 

Since $R^1\psi_*(\L|_{V_2}(-\sum B_i))=0$, from 
\eqref{MN4} and the long exact sequence of higher direct images under 
$\psi$ of \eqref{ex2} and \eqref{ex1} we get that 
$R^1\psi_*(\L|_{Z'}(-\sum B_i))=0$ 
and the natural map $\psi_*\L\to\psi_*(\L|_Z)$ is surjective. Also, it 
follows from \eqref{MN3} and the long exact sequence of 
higher direct images under $\psi$ of \eqref{ex3} that the natural map 
$\psi_*(\L|_Z)\to\psi_*(\L|_{V_1})$ is injective. Thus, since 
$\psi_*(\L|_{V_1})$ is a torsion-free, rank-1 sheaf on $W$, so is 
$\psi_*(\L|_Z)$. And, since $\psi_*\L\to\psi_*(\L|_Z)$ is surjective, we 
get an isomorphism $(\psi_*\L)_W\cong\psi_*(\L|_Z)$, 
finishing the proof of the claim.

To prove the ``only if'' part of Statement 2, let $W$ be any connected 
subcurve of $X$. Let $Z$ be as in the claim. Since $\L$ is admissible 
with respect to $\psi$, Statement 1 yields $R^1\psi_*\L=0$, and hence 
$R^1\psi_*(\L|_Z)=0$ from the long exact sequence of higher direct images 
under $\psi$ of \eqref{ex1}. Thus, by the claim and the 
projection formula,  
\begin{equation}\label{ZW}
\chi((\psi_*\L)_W\ox\E|_W)=\chi(\psi_*(\L|_Z)\ox\E|_W)=
\chi(\L|_Z\ox(\psi^*\E)|_Z).
\end{equation}
If $\L$ is semistable (resp.~$P$-quasistable, resp.~stable) then 
$\chi(\L|_Z\ox(\psi^*\E)|_Z)\geq 0$ (resp.~with equality only if $Z=Y$ or 
$Z\not\ni P$, resp.~with equality only if $Z=Y$). Now, if 
$Z=Y$ then $W=X$. Also, $P\in Z$ if and only if $\psi(P)\in W$. So 
\eqref{ZW} yields $\chi((\psi_*\L)_W\ox\E|_W)\geq 0$ 
(resp.~with equality only if $W=X$ or 
$W\not\ni\psi(P)$, resp.~with equality only if $W=X$).

As for the ``if'' part, let $U$ be a connected subcurve of $Y$. 
If $U$ is a union of components of $Y$ contracted by $\psi$, then
$U$ is a chain of rational curves of $Y$ collapsing to a node of $X$, 
and hence $\L|_U$ has degree at least $-1$ 
(exactly $0$ if $\L$ is $\psi$-invertible). Thus 
$$
\chi(\L|_U\ox\psi^*\E|_U)=\text{rk}(\E)\chi(\L|_U)\geq 0,
$$
with equality only if $\L$ is not $\psi$-invertible.

Suppose now that $U$ contains a component of $Y$ not contracted by
$\psi$. Then $W:=\psi(U)$ is a connected subcurve of $X$. 
Let $\wh U$ be the smallest subcurve of $Y$ containing $U$ and 
$\ol{Y-\psi^{-1}(W')}$, where $W':=\ol{X-W}$. Then $\wh U$ is connected 
and contained in $\psi^{-1}(W)$. Furthermore, $\chi(\O_U)-\chi(\O_{\wh U})$ 
is the number of connected components of $\ol{\wh U-U}$. Thus 
\begin{equation}\label{UU}
\deg(\L|_U)+\chi(\O_U)\geq\deg(\L|_{\wh U})+\chi(\O_{\wh U}),
\end{equation}
with equality only if $\L$ has degree 1 on every connected component of 
$\ol{\wh U-U}$. Let $Z$ be as in the claim. Notice that 
$\chi(\O_{\wh U})=\chi(\O_Z)$. Since 
$\deg(\L|_{\wh U})\geq\deg(\L|_Z)$ by the claim,
using \eqref{ZW} and \eqref{UU} we get
\begin{align*}
\chi(\L|_U\ox\psi^*\E|_U)&=\text{rk}(\E)(\deg(\L|_U)+\chi(\O_U))
+\deg(\psi^*\E|_U)\\
&\geq\text{rk}(\E)(\deg(\L|_{\wh U})+\chi(\O_{\wh U}))
+\deg(\psi^*\E|_{\wh U})\\
&=\text{rk}(\E)(\deg(\L|_{\wh U})+\chi(\O_Z))
+\deg(\psi^*\E|_Z)\\
&\geq\text{rk}(\E)(\deg(\L|_Z)+\chi(\O_Z))
+\deg(\psi^*\E|_Z)\\
&=\chi(\L|_Z\ox(\psi^*\E)|_Z)\\
&=\chi((\psi_*\L)_W\ox\E|_W).
\end{align*}
Assume that $\psi_*\L$ is 
semistable (resp.~$(\psi\sigma)$-quasistable, resp.~stable) 
with respect to $\E$. Then $\chi((\psi_*\L)_W\ox\E|_W)\geq 0$ 
(resp.~with equality only if $W=X$ or 
$W\not\ni\psi(P)$, resp.~with equality only if $W=X$). So 
$\chi(\L|_U\ox\psi^*\E|_U)\geq 0$. Suppose $\chi(\L|_U\ox\psi^*\E|_U)=0$. 
Then $\chi((\psi_*\L)_W\ox\E|_W)=0$ and equality holds in \eqref{UU}. 
If $W\not\ni\psi(P)$ then $U\not\ni P$. Suppose $W=X$. Then 
$\wh U=Y$. If $U\neq Y$ then $\L$ has degree 1 on each connected component of 
$\ol{Y-U}$, and thus $\L$ is not strongly admissible. 
\end{proof}

\begin{Prop}\label{compadm} Let $X$ be a connected curve. Let 
$\psi\:Y\to X$ be a semistable modification of $X$. 
Let $\L$ and $\M$ be invertible sheaves on $Y$ which are 
$\psi$-admissible. Assume that $\M\otimes\L^{-1}$ is a twister of the form
$$
\O_Y\left(\sum c_E E \right), \,\,\,  c_E\in\mathbb{Z},
$$
where the sum runs over the set of components $E$ of $Y$ contracted by
$\psi$. Then $\psi_*\L\simeq\psi_*\M$.
\end{Prop}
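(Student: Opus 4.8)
The plan is to reduce the statement to a local problem at each node of $X$ and then close the argument with a degree count. Since $X$ is a curve over $K$, I may take the base to be $\Spec K$. Let $P_1,\dots,P_r$ be the nodes of $X$ that $\psi$ replaces by chains, and let $E\subset Y$ be the (disjoint) union of the contracted components, so $E=E^{(1)}\sqcup\dots\sqcup E^{(r)}$ with $E^{(j)}$ the chain lying over $P_j$. Because $\sum c_EE$ is supported on $E$, the twister $\O_Y(\sum c_EE)$ is canonically trivialized on $Y\setminus E$ by the meromorphic section $\vartheta$ cutting out $\sum c_EE$; hence $\vartheta$ induces a canonical isomorphism $\theta\col\psi_*\L|_{X\setminus\{P_1,\dots,P_r\}}\to\psi_*\M|_{X\setminus\{P_1,\dots,P_r\}}$. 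As chains over distinct nodes are disjoint and $\psi$ is an isomorphism away from $E$, it then suffices to show that $\theta$ extends to an isomorphism near each $P_j$, which is a statement about a single chain.

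By Proposition \ref{famchain}(2), both $\psi_*\L$ and $\psi_*\M$ are torsion-free, rank-$1$ sheaves on $X$ with $R^1\psi_*\L=R^1\psi_*\M=0$, and they have equal degree, since a twister has degree $0$ and thus $\deg\M=\deg\L$. I would then invoke the following degree principle: a homomorphism between two torsion-free, rank-$1$ sheaves of the same degree on $X$ that is nonzero on every component is automatically an isomorphism, being injective (the source is torsion-free) with cokernel of length $\deg\psi_*\M-\deg\psi_*\L=0$. So it is enough to produce one homomorphism $\Theta\col\psi_*\L\to\psi_*\M$ that is nonzero on every component of $X$. The natural candidate is the map induced by multiplication by $\vartheta$: on each component of $Y$ not contracted by $\psi$ the section $\vartheta$ is generically nonzero, so $\Theta$ is nonzero on every component of $X$, and over $X\setminus\{P_1,\dots,P_r\}$ it coincides with $\theta$.

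The hard part will be to show that $\Theta$ is a genuine morphism of sheaves, that is, that it is regular at each $P_j$: for $s\in(\psi_*\L)_{P_j}=H^0(\psi^{-1}(\Spec\O_{X,P_j}),\L)$ the rational section $\vartheta\cdot s$ of $\M$ must again be regular, even though $\vartheta$ may have poles along the components of the chain $E^{(j)}$. This is exactly where the admissibility of \emph{both} $\L$ and $\M$ is used. Restricting to $E^{(j)}$ and running the successive-restriction argument by subchains as in the exact sequences behind Proposition \ref{famchain}, I would control the pertinent $h^0$ and $h^1$ by Lemma \ref{chainh1h0}: the bound $\deg(\L|_F)\le 1$ on every subchain $F$ forces sections of $\L$ to vanish along $E^{(j)}$ to precisely the order needed to absorb the poles of $\vartheta$, while the bound $\deg(\M|_F)\ge -1$ guarantees that the outcome is a bona fide element of $(\psi_*\M)_{P_j}$. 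I expect this local verification to be the main obstacle, and to be genuinely delicate: equality of degrees together with agreement away from the $P_j$ does not suffice by itself, since a priori $\psi_*\L$ and $\psi_*\M$ could differ at $P_j$ either by a degree-$0$ twist of the restriction to the normalization, or even in local type (locally free versus not)—the latter being possible because for chains of length greater than $1$ the local type is not determined by the total degree of $\L$ on the chain. Excluding these possibilities is precisely what the explicit section $\vartheta$, combined with the subchain degree bounds of Lemma \ref{chainh1h0} applied to $\L$ and $\M$ simultaneously, is designed to do.
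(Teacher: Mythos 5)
Your global strategy---a multiplication map $\Theta$ induced by the canonical section $\vartheta$ of the twister, then injectivity plus equality of degrees (valid, since the twister has total degree $0$ and Proposition~\ref{famchain}(2) gives both direct images degree $d$) to force an isomorphism---is viable and genuinely different from the paper's proof. The paper never constructs a homomorphism $\psi_*\L\to\psi_*\M$ at all: it first proves a combinatorial \emph{Claim} about the coefficients, namely $|c_{E_1}-c_{E_2}|\le 1$ for adjacent components of a chain and $|c_E|\le 1$ for extreme components (derived from admissibility of \emph{both} sheaves via $|\deg_G\T|\le 2$ on subchains, where $\T=\M\ox\L^{-1}$), then decomposes each chain into sign-coherent blocks $\K^+$, $\K^-$ on which necessarily $\deg_G\L=-\deg_G\M=\pm 1$, and finally identifies both $\psi_*\L$ and $\psi_*\M$ with pushforwards of twists of the restrictions to the common subcurve $W=\ol{Y-\cup_{G\in\K^+\cup\K^-}G}$, using Lemma~\ref{chainh1h0} and Riemann--Roch to kill the relevant $H^0$ and $H^1$ on the discarded blocks.

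However, your proposal has a genuine gap exactly at the point you flag: the local extension step is asserted, not proved, and the tools you name do not by themselves deliver it. First, since $Y\setminus E$ omits entire components, $\vartheta s$ is a section of $\M$ over an open set that is \emph{not dense}, so pole-absorption/boundedness is not the right criterion; extending across a chain $E^{(j)}$ means two separate things. (i) $\vartheta s$ must be regular at the attaching points on the non-contracted side: $\vartheta|_{\wt X}$ has a pole of order $-c_E$ at an attaching point whenever the extreme coefficient satisfies $c_E<0$, and the vanishing of $s$ needed to cancel it is \emph{not} forced by the bound $\deg(\L|_F)\le 1$ you invoke, but by $\deg_G\L=-1$ on the corresponding negative block (e.g., for a length-$1$ chain with $c=-1$, admissibility of $\M=\L\ox\T$ forces $\deg_E\L=-1$, whence $h^0(E,\L|_E)=0$ and every local section of $\psi_*\L$ vanishes at both attaching points); pinning down this pattern in general is precisely the paper's Claim, which you never formulate. (ii) One must then produce a section $\tau\in H^0(E^{(j)},\M|_{E^{(j)}})$ with prescribed values at the attachment nodes---surjectivity of an evaluation map, which again needs the block structure---and show it is unique so that $\Theta$ is well defined; uniqueness is Lemma~\ref{chainh1h0}(2) applied to $\M$, but existence is not. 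Once the Claim and the resulting degree pattern are established, your $\Theta$ is indeed a well-defined injection of torsion-free rank-$1$ sheaves of equal degree and the length count closes the argument; without them, the core of the proposition remains unestablished.
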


\begin{proof} Set $\T:=\M\otimes\L^{-1}$. Let $\R$ be the 
set of smooth, rational curves contained in $Y$ and contracted by
$\psi$. If $\R=\emptyset$, then $\T=\O_Y$ and thus $\L\cong\M$. 
Suppose $\R\ne\emptyset$. Let $\K$ be the set of maximal chains of rational 
curves contained in $\R$. 

{\it Claim:} For every $F\in \K$ and every two components $E_1,E_2\subseteq F$ 
such that $E_1\cap E_2\ne\emptyset$, we have $|c_{E_1}-c_{E_2}|\le 1$. In 
addition, if $E$ is an extreme component of $F$, then $|c_E|\le 1$.

Indeed, let $E_1,\dots E_n$ be the components of $F$, 
ordered in such a way that $\# E_i\cap E_{i+1}=1$ for $i=1,\dots,n-1$. Since  
 $\L$ and $\M$ are admissible, $|\deg_G\T|\le 2$ for every subchain 
$G$ of $F$. Set $c_{E_0}:=c_{E_{n+1}}:=0$. We will reason by contradiction. 
Thus, up to reversing the order of the $E_i$, we may assume 
that $c_{E_i}-c_{E_{i+1}}\ge 2$ 
for some $i\in\{0,\dots,n\}$. Then
$$
c_{E_i}\le c_{E_{i-1}}\le \cdots \le c_{E_1}\le c_{E_0}=0,
$$
because, if  
$c_{E_j}>c_{E_{j-1}}$ for some $j\in\{1,\dots,i\}$, then 
$$
\deg_{E_j\cup\cdots\cup E_i}\T=c_{E_{j-1}}-c_{E_j}+c_{E_{i+1}}-c_{E_i}<-2.
$$
Similarly, $c_{E_{i+1}}\ge c_{E_{i+2}}\ge \cdots \ge c_{E_n}\ge c_{E_{n+1}}=0$. 
But then
$$
0\le c_{E_{i+1}}<c_{E_i}\le 0,
$$
a contradiction that proves the claim.
  
Now, for each $F\in\K$, let $F^\dagger$ be the (possibly empty) union of 
components $E\subseteq F$ such that $c_E=0$. For each connected 
component $G$ of $\overline{F-F^\dagger}$ and irreducible components   
$E_1,E_2\subseteq G$, it follows from the claim that $c_{E_1}\cdot c_{E_2}>0$. 
Let $\K^+$ (resp. $\K^-$) be the collection of connected components $G$ of  
$\overline{F-F^\dagger}$ for $F\in\K$ such that $c_E>0$ (resp. $c_E<0$) 
for every irreducible component $E\subseteq G$.

Notice that, again by the claim, 
  \begin{equation}\label{extrcoef}
 c_E=
  \begin{cases}
  \begin{array}{ll}
  \,\,\,\,1 & \text{if $E$ is an extreme component of some $G\in \K^+_F$}\\
  -1 & \text{if $E$ is an extreme component of some $G\in \K^-_F$.}
    \end{array}
  \end{cases}
   \end{equation}
So, being $\L$ and $\M$ admissible, 
 \begin{equation}\label{degF}
 \deg_G \L=-\deg_G\M=
  \begin{cases}
  \begin{array}{ll}
  \,\,\,\,1 & \text{if } G\in \K^+ \\
  -1 & \text{if } G\in \K^-.  
  \end{array}
  \end{cases}
 \end{equation}

Define 
 $$
W^+:=\overline{Y- \cup_{G\in \K^+} G}, 
  \,\,\,\,\,\,\,\,\,\,
  W^-:=\overline{Y-\cup_{G\in \K^-} G},
  \,\,\,\,\,\,\,\,\,\,
  W:=\overline{Y-\cup_{G\in \K^-\cup\K^+} G}.
$$
For each $G\in\K^+\cup \K^-$, let $N_G$ and $N'_G$ denote the points of 
$G\cap \overline{Y-G}$, and put
$$
D^+:=\sum_{G\in\K^+}(N_G+N'_G) \,\,\text{ and }\,\, 
D^-:=\sum_{G\in\K^-}(N_G+N'_G).
$$
We may view $D^+$ and $D^-$ as divisors of $W$. 
Thus, by \eqref{extrcoef}, 
 \begin{equation}\label{rel1}
  \M|_W\simeq \L|_W(D^+-D^-).
 \end{equation}
 
Consider the natural diagram
\begin{equation}\label{diagram}
\begin{CD}
@. @.   @. 0 @. \\
 @. @.  @.   @VVV \\
@. @.   @. \L|_W(-D^-)@. \\
 @. @.  @.   @VVV \\
0@>>>  \underset{G\in \K^+}{\oplus} \L|_G(-N_G-N'_G)  
@>>> \L @>>>\L|_{W^+} @>>> 0\\
 @. @.  @.   @VVV \\
@.  @.  @. \underset{G\in \K^-}{\oplus} \L|_G   \\
 @. @.  @.   @VVV \\
@.  @.  @. 0  \\
\end{CD}
\end{equation}
where the horizontal and vertical sequences are exact. 
By (\ref{degF}) and Lemma \ref{chainh1h0}, and using the 
Riemann--Roch Theorem, 
$$
R^i\psi_* \L|_G(-N_G-N'_G)=H^i(G,\L|_G(-N_G-N'_G))\ox\O_{\psi(G)}=0
$$
for $G\in \K^+$ and $i=0,1$, whereas
$$
\psi_*\L|_G=H^0(G,\L|_G)\ox\O_{\psi(G)}=0 \; \text{ for } G\in \K^-.
$$
Hence, it follows from Diagram (\ref{diagram}), by considering the 
associated long exact sequences in higher direct images of $\psi$, that
\begin{equation}\label{rel2}
\psi_*\L\simeq (\psi|_W)_*\L|_W(-D^-).
\end{equation}

Consider a second  diagram, similar to Diagram \eqref{diagram}, but with the 
roles of $\K^+$ and $\K^-$, and thus of $D^+$ and $D^-$, reversed, and 
$\M$ substituted for $\L$. As before, 
$$
R^i\psi_*\M|_G(-N_G-N'_G))=
H^i(G,\M|_G(-N_G-N'_G))\ox\O_{\psi(G)}=0
$$
for $G\in \K^-$ and $i=0,1$, whereas
$$
\psi_*\M|_G\simeq H^0(G,\M|_G)\ox\O_{\psi(G)}=0 \text{ for } G\in \K^+.
$$
Hence, taking the associated long exact sequences, 
\begin{equation}\label{rel3}
\psi_*\M\simeq (\psi|_W)_*\M|_W(-D^+).
 \end{equation} 
 Combining (\ref{rel1}), (\ref{rel2}) and (\ref{rel3}), 
we get $\psi_*\L\simeq \psi_*\M$.
 \end{proof}

\section{The degree-$2$ Abel map}\label{6}

\subsection{Corrections}

Recall the notation: $C$ is a curve with irreducible 
components $C_1,\ldots,C_p$ defined over $K$, 
an algebraically closed field, $\dot C$ is the smooth locus of $C$, 
and $\C/B$ is a smoothing of $C$ with regular total space $\C$, 
the smooth locus of which is $\dot\C$. Also, $\P$ is an invertible sheaf on 
$\C$ and $\N(C)$ is the set of reducible nodes of $C$.

We would like to resolve the rational map 
\begin{equation}\label{alpha2dot}
\alpha^2_{\C/B}\col\C^2\dashrightarrow\ol{\mathcal J}
\end{equation}
given by \eqref{dotalpha}. We will reduce the resolution of the above 
map to a combinatorial question.

As seen in Subsection \ref{2.3}, for each pair of 
integers $(i,k)$ such that $1\leq i,k\leq p$, there is a formal 
sum 
$$
Z_{(i,k)}=\sum_{m=1}^p w_{(i,k)}(m)C_m
$$ 
of components of $C$ such that 
$$
\P|_C\ox\O_C(-Q_1-Q_2)\ox\O_C(-Z_{(i,k)})
$$
is $C_1$-quasistable for every $Q_1\in\dot C_i$ and $Q_2\in\dot C_k$, where 
$\dot C_j:=C_j\cap\dot C$ for every $j$. The 
formal sum is not unique, but differs from another by a multiple of 
the sum of all the components of $C$. Given two components $C_m$ and $C_n$ of 
$C$, set $\delta_{(i,k)}(m,n):=w_{(i,k)}(m)-w_{(i,k)}(n)$. 
Note that $\delta_{(i,k)}(m,n)$ 
remains the same if $Z_{(i,k)}$ is replaced by another 
formal sum having the same property. Also, 
$$
\delta_{(i,k)}(m,n)=\delta_{(k,i)}(m,n)=-\delta_{(k,i)}(n,m)=-\delta_{(i,k)}(n,m).
$$

We emphasize the numerical nature of the $\delta_{(i,k)}(m,n)$ by the 
following Definition and Proposition:

\begin{Def}\label{Abdata} 
Let $\Gamma$ be a connected graph without loops. Let $V$ be its 
set of vertices and $E$ its set of edges. 
For each $i\in V$, let 
$$
\underline c_i\:V\to\mathbb Z
$$
be defined by letting $\ul c_i(j)$ be the number of edges of 
$\Gamma$ with ends $i$ and $j$, for $j\neq i$, and 
$$
\ul c_i(i):=-\sum_{j\neq i}\ul c_i(j).
$$
Let $\ul q\:V\to\mathbb Z$ and $\ul e\:V\to\mathbb Q$ be such that
$$
f:=\sum_{i\in V}\ul e(i)=-2+\sum_{i\in V}\ul q(i).
$$

For each proper nonempty subset $I\subset V$, let 
$k_I$ be the number of edges of $\Gamma$ with one end inside $I$ and the 
other end outside. Let $v\in V$. A function 
$\ul d\:V\to\mathbb Z$ satisfying $\sum_i\ul d(i)=f$ is called 
\emph{$v$-quasistable with respect to $\ul e$} if 
$$
\sum_{i\in I}\big(\ul d(i)-\ul e(i)\big)\geq-\frac{k_I}{2}
$$
for every proper nonempty subset $I\subset V$, 
with equality only if $v\not\in I$.

For each $u\in V$, let $\ul\delta_u\:V\to\mathbb Z$ be defined 
by letting $\ul\delta_u(w):=0$ if $w\neq u$ and $\ul\delta_u(u):=1$. 
As pointed out in Subsection \ref{2.3}, 
for each pair $(i,k)\in V^2$, there exists 
$\ul w_{(i,k)}\:V\to\mathbb Z$ such that 
$$
\ul q-\ul \delta_i-\ul\delta_k-\sum_{m\in V}\ul w_{(i,k)}(m)\ul c_m
$$
is $v$-quasistable with respect to $\ul e$. The functions $\ul w_{(i,k)}$ are 
not uniquely defined, but their differences, 
$\ul w_{(i,k)}(m)-\ul w_{(i,k)}(n)$, are.

Let
$$
\ul\delta\:V^4\to\mathbb Z
$$
be defined by 
$$
\ul\delta(i,k,m,n):=\ul w_{(i,k)}(m)-\ul w_{(i,k)}(n)
$$
for each $(i,k,m,n)\in V^4$. The following conditions hold:
$$
\ul\delta(i,k,m,n)=\ul\delta(k,i,m,n)=-\ul\delta(k,i,n,m)=-\ul\delta(i,k,n,m)
$$
for all $(i,k,m,n)\in V^4$.

The data $(\Gamma,\ul e,\ul q,v)$ are called \emph{degree-2 Abel data}. 
And $\ul\delta$ is called their \emph{correction function}.
\end{Def}

\begin{Prop} Let $\Gamma$ be the essential dual graph of $C$ and $V$ its 
set of vertices. Let $\ul e\:V\to\mathbb Q$ and 
$\ul q\:V\to\mathbb Z$ be defined by setting $\ul e(C_i)=e_i$ and 
$\ul q(C_i):=\deg(\P|_{C_i})$ for each $i=1,\dots,p$. Set $v:=C_1$. 
Then 
$(\Gamma,\ul e,\ul q,v)$ are degree-2 Abel data and their correction 
function $\ul\delta$ satisfies
$$
\ul\delta(C_i,C_k,C_m,C_n)=\delta_{(i,k)}(m,n)
$$
for all $i,k,m,n$ between $1$ and $p$.
\end{Prop}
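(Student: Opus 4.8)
The plan is to translate the geometric construction of Subsection~\ref{2.3} into the combinatorial language of Definition~\ref{Abdata}, using the identification of the vertex set $V$ with $\{C_1,\dots,C_p\}$ and of the edges of $\Gamma=\Gamma^0_C$ with the reducible nodes of $C$. Under this identification the routine entries of the dictionary are immediate: $\Gamma^0_C$ is connected because $C$ is, and has no loops by the very definition of the essential dual graph; the degree identity $\sum_i\ul e(i)=f=-2+\sum_i\ul q(i)$ follows from $\deg(\P|_C)=f+2$; and for a proper nonempty $I\subset V$ with $Y:=\bigcup_{i\in I}C_i$, the combinatorial number $k_I$ equals $k_Y=\#(Y\cap Y')$, since a self-node of a component never lies on $Y\cap Y'$ and so only reducible nodes are counted on either side.

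The heart of the dictionary is the identity between the multidegree of the twister $\O_C(C_m)$ and the function $\ul c_m$, which I would establish by intersection theory on the regular surface $\C$. For $j\neq m$ one has $\deg(\O_C(C_m)|_{C_j})=C_m\cdot C_j=\#(C_m\cap C_j)$, the two branches at each common node meeting transversally, and this count is exactly the number of edges of $\Gamma^0_C$ joining $m$ and $j$, hence $\ul c_m(j)$. Because the fiber $C=\sum_l C_l$ is the divisor of $t$ on $\C$, it is numerically trivial, so $C\cdot C_m=0$ and therefore $C_m\cdot C_m=-\sum_{j\neq m}C_m\cdot C_j=\ul c_m(m)$. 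Consequently the multidegree of $\O_C(-Z_{(i,k)})=\O_C(-\sum_m w_{(i,k)}(m)C_m)$ is $-\sum_m w_{(i,k)}(m)\,\ul c_m$, while the multidegree of $\P|_C\ox\O_C(-Q_1-Q_2)$ for $Q_1\in\dot C_i$ and $Q_2\in\dot C_k$ is $\ul q-\ul\delta_i-\ul\delta_k$.

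Next I would check that, for a line bundle of multidegree $\ul d$ on $C$, geometric $C_1$-quasistability coincides with $v$-quasistability of $\ul d$ for $v=C_1$. For a line bundle the generalized multidegree vanishes on every edge, so $\deg(\L_Y)=\sum_{i\in I}\ul d(i)$ and neither the restricted degree nor $k_Y$ sees the irreducible nodes; this is precisely what makes the passage from $\Gamma_C$ to $\Gamma^0_C$ harmless. Writing $s(I):=\sum_{i\in I}(\ul d(i)-\ul e(i))$, the inequalities \eqref{BI1} and \eqref{BI2} read $-k_I/2\leq s(I)\leq k_I/2$ with the lower bound strict whenever $C_1\in I$. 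Since $s(I)+s(V\setminus I)=0$ and $k_I=k_{V\setminus I}$, the single combinatorial inequality $s(I)\geq -k_I/2$ imposed for all proper nonempty $I$ already yields the upper bound (apply it to $V\setminus I$), and its equality clause ``only if $v\notin I$'' is exactly the strictness of the lower bound for $I\ni v$. Thus the two notions agree.

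Combining these, the twister $Z_{(i,k)}$ whose existence is recorded in Subsection~\ref{2.3} provides, after translation, a function $\ul w_{(i,k)}=w_{(i,k)}$ making $\ul q-\ul\delta_i-\ul\delta_k-\sum_m\ul w_{(i,k)}(m)\,\ul c_m$ $v$-quasistable, which is exactly the existence clause of Definition~\ref{Abdata}; hence $(\Gamma^0_C,\ul e,\ul q,v)$ are degree-2 Abel data, and the correction function is $\ul\delta(C_i,C_k,C_m,C_n)=\ul w_{(i,k)}(m)-\ul w_{(i,k)}(n)=w_{(i,k)}(m)-w_{(i,k)}(n)=\delta_{(i,k)}(m,n)$. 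I expect the only genuinely delicate point to be the twister-multidegree identity of the second paragraph, since it is the sole place where the geometry of the total space $\C$ enters; everything else is a bookkeeping translation, once one is careful that working with honest line bundles is what lets the irreducible nodes drop out of both $d_Y$ and $k_Y$.
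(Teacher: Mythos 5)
Your proposal is correct and is essentially the paper's own argument: the paper's proof consists of the single word ``Immediate,'' and what you have written is precisely the routine dictionary it leaves implicit --- vertices/edges matched with components/reducible nodes, $k_I=k_Y$ because irreducible nodes never lie on $Y\cap Y'$, the twister identity $\ul d_{\O_C(C_m)}=\ul c_m$ via intersection theory on the regular surface $\C$ with $C\cdot C_m=0$ since $C$ is the divisor of $t$, and the complement trick $s(I)+s(V\setminus I)=0$, $k_I=k_{V\setminus I}$ reconciling the two-sided geometric inequalities with the one-sided combinatorial ones. All steps check out, including the equality clause matching $C_1$-quasistability with $v$-quasistability, so there is nothing to correct.
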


\begin{proof} Immediate.
\end{proof}

\subsection{Admissibility}

Let $\phi\:\wt\C^2\to\C^2$ and $\psi\:\wt\C^3\to\wt\C^2\times_B\C$ be 
good partial desingularizations. Let $\rho\:\wt\C^3\to\wt\C^2$ be the 
composition of $\psi$ with the 
projection $p_1\:\wt\C^2\times_B\C\to\wt\C^2$. It follows from 
Proposition \ref{propblows2} that the fibers of $\rho$ over 
closed points of $\wt\C^2$ are isomorphic to $C$, to $C(1)$ or to $C(2)$. 

For each pair $(i,k)$ such that $1\leq i,k\leq p$, denote by 
$\Z_{(i,k)}$ the weighted sum over $m=1,\dots,p$ of the strict transforms 
to $\wt\C^3$ of the triple products $C_i\times C_k\times C_m$, each with 
weight $w_{(i,k)}(m)$. Also, let $\Delta_l$ be the inverse image of the diagonal 
$\Delta\subset\C^2$ 
under the projection $\C^3\to\C^2$ onto the product over $B$ of the 
$l$-th and last factors of $\C^3$, and $\wt\Delta_l$ be the strict 
transform of $\Delta_l$ to $\wt\C^3$ 
for $l=1,2$. The $\Z_{(i,k)}$ are Cartier divisors of $\wt\C^3$, 
whereas the $\wt\Delta_l$ are so at least along 
the fibers of $\rho$ isomorphic to $C(2)$, again by 
Proposition \ref{propblows2}.

Recall the natural subscheme 
$F_2\subset\IP_{\C^2}(\I_{\Delta|\C^2})\times_B\C$ defined in 
Subsection~\ref{flag}. Let $\wt F_2$ be the strict transform 
of $F_2$ to $\wt\C^3$. It follows from Proposition~\ref{propblows2}
that wherever $\wt F_2$ is not $\rho$-flat, 
$$
\I_{\wt{F_2}|\wt\C^3}=\I_{\wt\Delta_1|\wt\C^3}\I_{\wt\Delta_2|\wt\C^3}=
\I_{\wt\Delta_1|\wt\C^3}\otimes\I_{\wt\Delta_2|\wt\C^3}
$$
and both $\wt\Delta_1$ and $\wt\Delta_2$ are Cartier. In any case, it
follows that the sheaf of ideals $\I_{\wt F_2|\wt\C^3}$ is a relatively torsion-free, 
rank-1 sheaf of relative degree $-2$ on $\wt\C^3/\wt\C^2$ agreeing 
with $\O_{\wt\C^3}(-\wt\Delta_1-\wt\Delta_2)$ around the fibers of 
$\rho$ isomorphic to $C(2)$.

Set 
$$
\L_\psi:=\psi^*p_2^*\P\ox\I_{\wt F_2|\wt\C^3}\ox\O_{\wt\C^3}(-\sum_{i,k=1}^p\Z_{(i,k)}),
$$
where $p_2\:\wt\C^2\times_B\C\to\C$ is the projection.  

\begin{Lem}\label{lemblowindep} Let $\phi\:\wt\C^2\to\C^2$ and 
$\psi\:\wt\C^3\to\wt\C^2\times_B\C$ be good partial
desingularizations. Then $\psi$ is a semistable modification of the 
projection $p_1\:\wt\C^2\times_B\C\to\wt\C^2$. In addition,
\begin{itemize}
\item[1.] $\L_\psi$ is $\psi$-admissible if and only if 
$\L_\psi$ is $\psi$-admissible at each distinguished point of
$\phi^{-1}(R,S)$ for every $R,S\in\N(C)$.
\end{itemize}
Furthermore, let $\rho\:\wt\C^3\to\wt\C^2$ be the composition of 
$\psi$ with $p_1$. For each fiber $X$ of 
$\rho$ over a closed point of $\wt\C^2$, let $\mu_X\:X\to C$ be the 
restriction of $\psi$ to $X$ composed with the projection 
$p_2\:\wt\C^2\times_B\C\to\C$. If $\L_\psi$ is $\psi$-admissible then
\begin{itemize}
\item[2.] $\psi_*\L_\psi$ defines a map to 
$\ol{\mathcal J}$ if and only if $\mu_{X*}(\L_\psi|_X)$ is 
$C_1$-quasistable with respect to $\ul e$ for every fiber 
$X$ over a distinguished point of $\phi^{-1}(R,S)$ for $R,S\in\N(C)$.
\end{itemize}
\end{Lem}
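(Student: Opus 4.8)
The plan is to handle the three assertions in turn, in every case reducing matters to the local fibre analysis of Section~\ref{4}. First I would settle the semistable modification claim. Over the generic point of $B$ the map $\psi$ is an isomorphism, so nothing is needed there. Over a closed point $A$ of $\wt\C^2$ lying above the special point of $B$, the fibre of $p_1$ is (isomorphic to) $C$, while Proposition~\ref{propblows2} identifies $X:=\rho^{-1}(A)$ with $C$, with $C(1)$, or with $C(2)$, and in each case exhibits $\mu_X\:X\to C$ as the contraction $\mu_\eta$ collapsing the chains of rational curves inserted over the reducible nodes. This is exactly the definition of a semistable modification. I would also record here that $\I_{\wt F_2}$, and hence $\L_\psi$, is invertible in a neighbourhood of every such chain: by Proposition~\ref{propblows2}, Statements~5--7, either $\wt\Delta_1$ and $\wt\Delta_2$ are Cartier along $\mu_X^{-1}(S)$ or $\I_{\wt F_2}=\I_{\wt\Delta_1}\ox\I_{\wt\Delta_2}$ there, for every reducible node $S$. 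This makes $\psi$-admissibility of $\L_\psi$ meaningful and lets Proposition~\ref{famchain} apply along the chains.

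For Statement~1 the ``only if'' direction is trivial, so the content is the converse. The chains of rational curves in the fibres of $\rho$ sit only over reducible nodes and have length at most $2$; by Proposition~\ref{propblows2}, Statement~3, the length-$2$ chains occur exactly in the $C(2)$ fibres, i.e.\ over the distinguished points of $\phi^{-1}(R,S)$ for $R,S\in\N(C)$. Over a $C$ fibre there are no chains, so admissibility is automatic. The crux is the $C(1)$ fibres: there every chain is a single $\IP^1$, say $E_S=\mu_X^{-1}(S)$, and I must show $\deg(\L_\psi|_{E_S})\in\{-1,0,1\}$. By the proof of Proposition~\ref{propblows2}, near a $C(1)$ fibre the map $\psi$ is analytically a single blowup, so the restriction of $\sum_{i,k}\Z_{(i,k)}$ to $X$ is governed by the double-product computation of Proposition~\ref{lemabasis2}, while $\I_{\wt F_2}|_X$ contributes the divisor $-Q_1-Q_2$. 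Assembling these via the projection formula, $\mu_{X*}(\L_\psi|_X)$ is precisely the value $\P|_C\ox\O_C(-Q_1-Q_2)\ox\O_C(-Z_{(i,k)})$ of the codimension-$2$ Abel map of Subsection~\ref{2.3}; the coefficients $w_{(i,k)}(m)$ were chosen (Definition~\ref{Abdata}) so that this sheaf is $C_1$-quasistable, hence torsion-free and rank-$1$ on $C$. By the torsion-free-implies-admissible direction of Proposition~\ref{famchain}, Statement~2, $\L_\psi$ is then $\psi$-admissible at $X$, as wanted.

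For Statement~2, assume $\L_\psi$ is $\psi$-admissible. Then Proposition~\ref{famchain}, Statement~2, shows $\psi_*\L_\psi$ is a relatively torsion-free, rank-$1$ sheaf on $(\wt\C^2\times_B\C)/\wt\C^2$ whose formation commutes with base change; in particular its restriction to the $p_1$-fibre over $s$ is $\mu_{X*}(\L_\psi|_X)$, with $X=\rho^{-1}(s)$. Because $\ol{\mathcal J}$ is a fine moduli space for $C_1$-quasistable sheaves, $\psi_*\L_\psi$ induces a morphism $\wt\C^2\to\ol{\mathcal J}$ if and only if $\mu_{X*}(\L_\psi|_X)$ is $C_1$-quasistable with respect to $\ul e$ for every $s$ (over the generic point of $B$ the fibre is smooth and the sheaf is the line bundle defining the Abel map, hence lies in the Jacobian). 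Over $C$ and $C(1)$ fibres this sheaf was already identified above with the quasistable Abel value, so the condition holds automatically; the only remaining constraints come from the $C(2)$ fibres, i.e.\ over the distinguished points, which gives the stated equivalence.

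The main obstacle I expect lies in the $C(1)$ case of Statement~1, namely the bookkeeping needed to identify $\mu_{X*}(\L_\psi|_X)$ with the quasistable Abel sheaf. One must match the restriction to a single exceptional $\IP^1$ of the triple-product correction $\sum_{i,k}\Z_{(i,k)}$ against the double-product twister $Z_{(i,k)}$ of Proposition~\ref{lemabasis2}, and verify that the contribution of $\I_{\wt F_2}$ is exactly $-Q_1-Q_2$, so that only the finitely many relevant pairs $(i,k)$ (those for which $A$ lies on the strict transform of $C_i\times C_k$) actually survive. This is where Propositions~\ref{lemabasis2} and~\ref{propblows2} have to be combined carefully; once the identification of $\mu_{X*}(\L_\psi|_X)$ is in hand, both the admissibility of Statement~1 and the quasistability reduction of Statement~2 follow formally from Proposition~\ref{famchain}.
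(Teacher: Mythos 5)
Your handling of the semistable-modification claim and of the trivial ``only if'' directions via Proposition~\ref{propblows2} matches the paper, but the heart of your argument contains a genuine error: the claim that over a $C(1)$ fibre $X$ the pushforward $\mu_{X*}(\L_\psi|_X)$ is ``precisely the value $\P|_C\ox\O_C(-Q_1-Q_2)\ox\O_C(-Z_{(i,k)})$ of the codimension-$2$ Abel map,'' hence automatically $C_1$-quasistable and hence admissible. A point $A\in\wt\C^2$ whose fibre is $C(1)$ lies over a pair $(R_1,R_2)$ in which at least one coordinate is a \emph{reducible node} (e.g.\ a non-distinguished point of the exceptional curve over $(R,S)\in\N(C)^2$, or a point over $(Q,S)$ with $S$ reducible); such a pair is not a pair of smooth points, so there is no ``Abel value'' there --- indeed the whole difficulty is that $\alpha^2_{\C/B}$ is a priori undefined at such points. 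Concretely, at a non-distinguished point $A$ of $\phi^{-1}(R,S)$, the point $A$ lies on the strict transforms of \emph{two} products, say $C_i\times C_l$ and $C_j\times C_k$, so two corrections $\Z_{(i,l)}$ and $\Z_{(j,k)}$ restrict nontrivially to $X$, and $\I_{\wt F_2}$ contributes along the exceptional curves over $R$ and $S$, not a divisor $-Q_1-Q_2$ supported at smooth points. The resulting degree of $\L_\psi$ on the exceptional curve $E_T$, for $T\in\N(C)$, is a difference of correction numbers such as $\delta_{(j,k)}(m_T,n_T)-\delta_{(i,l)}(m_T,n_T)$, which need \emph{not} lie in $\{-1,0,1\}$: these are exactly the nontrivial inequalities of Definition~\ref{admfun}, which Theorem~\ref{isadmissible} shows can fail for given Abel data. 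If admissibility over the $C(1)$ locus were automatic, as you assert, Statement~1 would be vacuous and the paper's numerical conditions empty. (Your appeal to the torsion-free-implies-admissible direction of Proposition~\ref{famchain} is also circular as used: to compute $\mu_{X*}(\L_\psi|_X)$ and its degree one already needs the chain analysis you are trying to avoid.) The same error propagates into your Statement~2, where quasistability over the $C$ and $C(1)$ fibres is likewise claimed to be automatic; it is automatic only over pairs of smooth points.

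What the paper does instead, and what your proof is missing, is a degeneration argument. Stratify $\wt\C^2=U_{-1}\cup U_0\cup U_1\cup U_2$ by fibre type, with $U_2$ the set of distinguished points. By preservation of degree, the degrees of $\L_\psi$ on the exceptional curves are constant along each connected component of $U_1$; every connected component of $U_1$ has a boundary point in $U_2$, and there each exceptional curve degenerates to an exceptional curve or chain, so admissibility at the $C(2)$ fibres forces the degrees over all of $U_1$ into $\{-1,0,1\}$. This proves Statement~1. For Statement~2 one needs a further input your sketch omits: openness of quasistability (\cite{E01}, Prop.~34) gives quasistability of $\mu_{X*}(\L_\psi|_X)$ on a neighbourhood of $U_2$; since quasistability is a numerical condition and the multidegree of $\L_\psi|_X$ is constant on each connected component of $U_0$ and $U_1$ --- all of which meet such a neighbourhood --- it then holds everywhere. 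Your use of Proposition~\ref{famchain} for the formal parts (flatness, base change, the pushforward defining a morphism to $\ol{\mathcal J}$) is correct, but the passage from the finitely many distinguished points to the rest of $\wt\C^2$ cannot be achieved by direct identification with smooth-locus Abel values; it requires the continuity and openness argument above.
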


\begin{proof} The first statement follows from Lemma~\ref{propblows2},
  which says that the fibers of $\rho$ over closed points of $\wt\C^2$ 
are isomorphic to $C$, $C(1)$ or $C(2)$. 
There is a stratification $\wt\C^2=U_{-1}\cup U_0\cup U_1\cup U_2$ 
by locally closed subschemes such that the fibers of 
$\rho$ over $U_i$ are isomorphic to 
$C(i)$ for $i=0,1,2$, and nonsingular for $i=-1$. 
(Of course, $U_{-1}$ is the fiber of $\wt\C^2$ over the generic point of $B$.) 
By Proposition \ref{propblows2}, 
the subscheme $U_2$ is simply the collection of distinguished points on 
the fibers of $\phi$ over pairs of reducible nodes of $C$. 

By 
preservation of degree, the degrees of $\L_\psi|_X$ on the exceptional curves 
of $\mu_X\:X\to C$ are the same for every fiber $X$ of $\rho$ over a point on 
a connected component of $U_1$. Now, every connected component of $U_1$ has a 
point of $U_2$ in the boundary. And the exceptional curves of the fibers 
of $\rho$ over points on this connected component degenerate to either an 
exceptional curve or an exceptional chain in the corresponding fiber over 
the boundary point on $U_2$. Thus, again by preservation of degree, if 
$\L_\psi|_X$ is $\mu_X$-admissible 
for the fibers $X$ over points on $U_2$, then so is 
$\L_\psi|_X$ for the fibers $X$ over points on $U_1$, and thus so 
is $\L_\psi$ with respect to $\psi$. This proves the ``if'' part of 
Statement 1. The ``only if'' part is trivial.

Suppose now that $\L_\psi$ is $\psi$-admissible. By 
Proposition \ref{famchain}, $\psi_*\L_{\psi}$ is a relatively 
torsion-free, rank-1 sheaf on $\wt\C^2\times_B\C/\wt\C^2$ of relative 
degree $f$, with formation commuting with base change. 
We prove Statement 2. Since 
$$
(\psi_*\L_{\psi})|_{p_1^{-1}(A)}\cong\mu_{\rho^{-1}(A)*}
(\L_{\psi}|_{\rho^{-1}(A)})
$$
for every $A\in\wt\C^2$, only its ``if'' part is nontrivial. So assume 
$\mu_{X*}(\L_\psi|_X)$ is $C_1$-quasistable with respect to $\ul e$ for every 
fiber $X$ over a point on $U_2$. Since quasistability is an open 
property by \cite{E01}, Prop.~34, p.~3071,
$\mu_{X*}(\L_\psi|_X)$ is $C_1$-quasistable with respect to $\ul e$ 
for every fiber $X$ over a closed point of $\wt\C^2$ on a neighborhood of 
$U_2$. Any such neighborhood intersects all connected 
components of $U_0$ and of $U_1$. But quasistability is a numerical condition, 
and since, by preservation of degree, the multidegree of 
$\L_\psi|_X$ is constant as $X$ varies as a fiber on each connected component 
of $U_0$ or $U_1$, if $\mu_{X*}(\L_\psi|_X)$ is 
$C_1$-quasistable with respect to $\ul e$ for some such fiber $X$, then 
so is $\mu_{X*}(\L_\psi|_X)$ 
for any such fiber $X$. So $\psi_*\L$ defines 
a map to $\ol{\mathcal J}$.
\end{proof}

\begin{Def}\label{admfun} 
Let $(\Gamma,\ul e,\ul q,v)$ be degree-2 Abel data. Let $V$ 
be the set of vertices and $E$ the set of edges of $\Gamma$. A 
\emph{resolution} of the degree-2 Abel data is a map
$$
\ul r\: E^2\to V^2
$$
that assigns to each pair of edges $(e_1,e_2)$ of $\Gamma$ a pair of vertices 
$(v_1,v_2)$ where $v_1$ is an end of $e_1$ and $v_2$ is an end of $e_2$, and 
takes the diagonal of $E^2$ to the diagonal of $V^2$.
 
Two resolutions $\ul r_1$ and $\ul r_2$ are said to be
\emph{equivalent on} $F\subseteq E^2$ if for each pair of edges
$(e_1,e_2)\in F$, either $v_{1,1}=v_{2,1}$ and 
$v_{1,2}=v_{2,2}$ or $v_{1,1}\neq v_{2,1}$ and $v_{1,2}\neq v_{2,2}$, 
where $\ul r_i(e_1,e_2)=(v_{i,1},v_{i,2})$ for $i=1,2$. They are
simply said to be \emph{equivalent} if they are equivalent on $E^2$. Given a resolution 
$\ul r$, an equivalent resolution $\check{\ul r}$ is obtained by the
condition that $\ul r(e_1,e_2)_i$ and $\check{\ul r}(e_1,e_2)_i$ 
be the distinct vertices of $e_i$ for each $(e_1,e_2)\in E^2$ 
and $i=1,2$. We call $\check{\ul r}$ the 
\emph{mirror resolution} of $\ul r$. 

Let $\ul\delta$ be the correction function of $(\Gamma,\ul e,\ul q,v)$. 
Let $\ul r$ be a resolution of the Abel data. We call $\ul r$ 
\emph{admissible at} $(e_1,e_2)\in E^2$ if, 
letting $(v_1,v_2)=\ul r(e_1,e_2)$ and $(w_1,w_2)=\check{\ul r}(e_1,e_2)$, 
the following three conditions hold:
\begin{enumerate}
\item For each edge $e$ distinct from $e_1$ and $e_2$, letting $m$ and $n$ be 
the ends of $e$, the following inequalities hold:
\begin{align*}
&\big|\ul\delta(v_1,v_2,m,n)-\ul\delta(w_1,v_2,m,n)\big|\leq 1,\\
&\big|\ul\delta(v_1,v_2,m,n)-\ul\delta(v_1,w_2,m,n)\big|\leq 1,\\
&\big|\ul\delta(w_1,v_2,m,n)-\ul\delta(v_1,w_2,m,n)\big|\leq 1,\\
&\big|\ul\delta(w_1,w_2,m,n)-\ul\delta(w_1,v_2,m,n)\big|\leq 1,\\
&\big|\ul\delta(w_1,w_2,m,n)-\ul\delta(v_1,w_2,m,n)\big|\leq 1.
\end{align*}
\item If $e_1\neq e_2$ then 
\begin{align*}
&\big|\ul\delta(v_1,v_2,v_1,w_1)-\ul\delta(w_1,v_2,v_1,w_1)-1\big|\leq 1,\\
&\big|\ul\delta(v_1,v_2,v_1,w_1)-\ul\delta(v_1,w_2,v_1,w_1)\big|\leq 1,\\
&\big|\ul\delta(w_1,v_2,v_1,w_1)-\ul\delta(v_1,w_2,v_1,w_1)+1\big|\leq 1,\\
&\big|\ul\delta(w_1,w_2,v_1,w_1)-\ul\delta(w_1,v_2,v_1,w_1)\big|\leq 1,\\
&\big|\ul\delta(w_1,w_2,v_1,w_1)-\ul\delta(v_1,w_2,v_1,w_1)+1\big|\leq 1,\\
&\big|\ul\delta(v_1,w_2,v_2,w_2)-\ul\delta(v_1,v_2,v_2,w_2)+1\big|\leq 1,\\
&\big|\ul\delta(v_1,v_2,v_2,w_2)-\ul\delta(w_1,v_2,v_2,w_2)\big|\leq 1,\\
&\big|\ul\delta(w_1,v_2,v_2,w_2)-\ul\delta(v_1,w_2,v_2,w_2)-1\big|\leq 1,\\
&\big|\ul\delta(w_1,w_2,v_2,w_2)-\ul\delta(w_1,v_2,v_2,w_2)+1\big|\leq 1,\\
&\big|\ul\delta(v_1,w_2,v_2,w_2)-\ul\delta(w_1,w_2,v_2,w_2)\big|\leq 1.
\end{align*}
\item If $e_1=e_2$ then 
\begin{align*}
&\big|\ul\delta(v_1,w_1,v_1,w_1)-\ul\delta(v_1,v_1,v_1,w_1)+1\big|\leq 1,\\
&\big|\ul\delta(v_1,w_1,v_1,w_1)-\ul\delta(w_1,w_1,v_1,w_1)-1\big|\leq 1,
\end{align*}
(Notice that in this case $(v_1,w_1)=(v_2,w_2)$.)
\end{enumerate}
We say that $\ul r$ is
\emph{admissible} if $\ul r$ is admissible at every $(e_1,e_2)\in
E^2$. Notice that $\ul r$ is admissible if and only if any other resolution
equivalent to $\ul r$ is admissible. 
\end{Def}

\begin{Thm}\label{isadmissible} 
Let $\Gamma$ be the essential dual graph of $C$. Let $V$ be its 
set of vertices and $E$ its set of edges. Let $\ul e\:V\to\mathbb Q$ and 
$\ul q\:V\to\mathbb Z$ be defined by setting $\ul e(C_i):=e_i$ and 
$\ul q(C_i):=\deg(\P|_{C_i})$ for each $i=1,\dots,p$. Set $v:=C_1$. 
Let $\phi\:\wt\C^2\to\C^2$ be a good partial desingularization. Let 
$\ul r\:E^2\to V^2$ be a function ``defined'' by sending each pair of 
reducible nodes $(R,S)$ to $(C_i,C_k)$, where 
$(R,S)\in C_i\times C_k$ and 
the strict transform to $\wt\C^2$ of $C_i\times C_k$ does not 
contain $\phi^{-1}(R,S)$. Then:
\begin{itemize}
\item[1.] $\ul r$ is a resolution of $(\Gamma,\ul e,\ul q,v)$. Any other 
function satisfying the same condition ``defining'' $\ul r$ is 
equivalent to it.
\end{itemize}
Let $\psi\:\wt\C^3\to\wt\C^2\times_B\C$ be a good  
partial desingularization and $\rho\:\wt\C^3\to\wt\C^2$ the
composition of $\psi$ with the first projection
$p_1\:\wt\C^2\times_B\C\to\wt\C^2$.  
\begin{itemize}
\item[2.] For each $R$, $S$ reducible nodes of $C$, the sheaf 
$\L_\psi$ is $\psi$-admissible at the two distinguished points of 
$\phi^{-1}(R,S)$ if and only if 
$\ul r$ is admissible at $(R,S)$.
\item[3.] $\L_\psi$ is $\psi$-admissible if and only if $\ul r$ is admissible.
\end{itemize}
\end{Thm}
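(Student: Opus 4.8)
The plan is to establish the three statements in order, reducing each to the local intersection-theoretic computations already packaged in Propositions~\ref{fl}, \ref{lemabasis}, \ref{propblows2} and \ref{propblows2a}.

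For Statement~1, fix reducible nodes $R\in C_i\cap C_j$ and $S\in C_k\cap C_l$ and set $E:=\phi^{-1}(R,S)$. By Proposition~\ref{lemabasis}, among the four products $C_a\times C_b$ with $a\in\{i,j\}$ and $b\in\{k,l\}$, exactly two have strict transforms containing $E$ while the other two meet $E$ transversally; the equations there show the two transforms avoiding $E$ are those of $C_i\times C_k$ and $C_j\times C_l$. Hence the condition ``defining'' $\ul r$ singles out the pair $(C_i,C_k)$ together with its mirror $(C_j,C_l)$, so $\ul r(R,S)$ is determined up to passage to its mirror, i.e. up to equivalence, and in either case its first (resp.\ second) coordinate is an end of $R$ (resp.\ $S$). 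When $R=S$ the two products avoiding $E$ are $C_i\times C_i$ and $C_j\times C_j$ by Proposition~\ref{lemabasis}, so $\ul r$ carries the diagonal of $E^2$ into the diagonal of $V^2$. This shows $\ul r$ is a resolution and that any competing choice is equivalent to it.

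Statement~3 will follow from Statement~2 and Lemma~\ref{lemblowindep}, which reduces $\psi$-admissibility of $\L_\psi$ to $\psi$-admissibility at the distinguished points lying over pairs of reducible nodes; since admissibility of a resolution means admissibility at every $(e_1,e_2)\in E^2$, i.e. at every pair of reducible nodes, and is preserved under equivalence, the pointwise equivalences of Statement~2 assemble into Statement~3. So the heart is Statement~2. Fix $R,S\in\N(C)$ and a distinguished point $A$ of $\phi^{-1}(R,S)$, put $X:=\rho^{-1}(A)$ with $\mu_X\:X\to C$. By Proposition~\ref{propblows2} one has $X\cong C(2)$, and near the fibers isomorphic to $C(2)$ the sheaf $\I_{\wt F_2|\wt\C^3}$ equals $\O_{\wt\C^3}(-\wt\Delta_1-\wt\Delta_2)$; hence
$$
\L_\psi|_X\cong\mu_X^*(\P|_C)\ox\O_X\big(-\wt\Delta_1-\wt\Delta_2-\sum_{i,k}\Z_{(i,k)}\big)\big|_X.
$$
I would then compute the degree of this sheaf on every subchain of every chain of $X$ over a reducible node $T$, using the section $\lambda$ and the regular surface $\W$ of Proposition~\ref{propblows2a}: since $\mu_X^*(\P|_C)$ is trivial on each exceptional component, these degrees become intersection numbers on $\W$ of $-\xi^*\wt\Delta_1-\xi^*\wt\Delta_2-\xi^*D$ against the components $E_{T,m_T}$ and $E_{T,n_T}$, where $D:=\sum_{i,k}\Z_{(i,k)}$ carries the weights $w_{(i,k)}(m)$.

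The computation splits according to the position of $T$. When $T$ differs from $R$ and $S$, neither $\wt\Delta_1$ nor $\wt\Delta_2$ meets the chain over $T$, so, using \eqref{wm}, \eqref{wmT} and the self-intersection $-2$ of each exceptional component on $\W$, the two component-degrees and the chain-degree are exactly the three pairwise differences $\ul\delta(r,s,m_T,n_T)-\ul\delta(r',s',m_T,n_T)$ among the three corners $\{(i,k),(i,l),(j,k)\}$ meeting $A$; performing this at both distinguished points $A$ and $A'$ (whose corner-triples are the two ``$L$-shapes'' of $\{(i,k),(i,l),(j,k),(j,l)\}$) produces all six pairwise differences except the one between $(v_1,v_2)$ and $(w_1,w_2)$, which is precisely the list of five inequalities of condition~(1) in Definition~\ref{admfun}. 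When $T=R$ (and $R\neq S$), the horizontal divisor $\Gamma_1$ meets $X$ transversally on $E_{R,i}$ and the coefficients $h(R,i),h(R,j)$ of \eqref{wmR} enter, contributing the $\pm1$ shifts, and the resulting degrees at $A$ and $A'$ match the inequalities of condition~(2) carrying the argument $(v_1,w_1)$; the case $T=S$ is symmetric, using $\Gamma_2$, \eqref{wmS} and the argument $(v_2,w_2)$. Finally, when $R=S$ both diagonals meet the single chain over $R=S$, and the same bookkeeping yields condition~(3).

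The main obstacle is this last, purely combinatorial matching: the quantities produced by Proposition~\ref{propblows2a} depend on choices dictated by the blowup sequence (the labelling $m_T,n_T$ and the ordering of the triple $(r_{T,j},s_{T,j})$), whereas the inequalities of Definition~\ref{admfun} do not. The observation that removes this dependence is that $\psi$-admissibility demands the degree bound on \emph{every} subchain at \emph{both} distinguished points of $\phi^{-1}(R,S)$; the union of the resulting conditions ranges over all the relevant corner pairs and is manifestly invariant under swapping $m_T\leftrightarrow n_T$ and reordering the triple, so it coincides with the choice-free list in Definition~\ref{admfun}. Verifying that the six pairwise differences collapse to exactly the five listed inequalities of condition~(1), and that the $\pm1$ shifts land exactly as written in conditions~(2) and~(3) once $h(R,\cdot)$ and $h(S,\cdot)$ are inserted, is the one place where signs and definitions must be tracked with care.
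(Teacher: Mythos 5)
Your proposal is correct and follows essentially the same route as the paper: Statement~1 from the local blowup description in Proposition~\ref{lemabasis}, Statement~2 by reducing the admissibility inequalities on the length-2 chains of the fibers $X\cong C(2)$ over the distinguished points to the pullback formulas of Proposition~\ref{propblows2a} (your intersection computation on the smooth surface $\W$, with self-intersection $-2$, is exactly how one reads off those coefficients), with the same case division $T\notin\{R,S\}$, $T=R$, $T=S$, $T=R=S$ and the same observation that combining both distinguished points yields the five-of-six pairwise differences and the $\pm1$-shifted inequalities of Definition~\ref{admfun} independently of the choices in the blowup sequence, and Statement~3 from Statement~2 via Lemma~\ref{lemblowindep}. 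The case-by-case sign tracking you defer is also where the paper simply verifies ``several cases'' all yielding the same choice-free lists, so nothing essential is missing.
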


\begin{proof} 
Since $\phi$ is a composition of blowups, the first along the 
diagonal, the strict transform to $\wt\C^2$ of $C_j\times C_j$ does not 
contain $\phi^{-1}(R,R)$ for any $R\in\N(C)$. So $\ul r$ is a resolution. 
Furthermore, let $(R,S)\in\N(C)^2$ and $G:=\phi^{-1}(R,S)$. 
Let $C_i$ and $C_j$ be the two 
components containing $R$ and $C_k$ and $C_l$ those containing $S$. Then the 
strict transform to $\wt\C^2$ of $C_i\times C_k$ does not contain 
$G$ if and only if that of $C_j\times C_l$ does not contain 
$G$ if and only if that of $C_j\times C_k$ contains $G$ if and only if 
that of $C_i\times C_l$ contains $G$. So, any other function 
satisfying 
the same condition ``defining'' $\ul r$ is equivalent to it.

We prove Statement 2 now. Let $\ul\delta$ be the 
correction function of $(\Gamma,\ul e,\ul q,v)$. 
Let $(R,S)$ be a pair of reducible nodes 
of $C$. Let $C_i$ and $C_j$ be the two 
components containing $R$, and $C_k$ and $C_l$ those containing $S$. 
Assume that $\ul r(R,S)=(C_i,C_k)$. Let $A_1$ and $A_2$ be the 
distinguished points of $\phi^{-1}(R,S)$. Assume that 
$A_1$ is the point of intersection of the strict transforms to $\wt\C^2$ of 
$C_i\times C_k$, $C_i\times C_l$ and $C_j\times C_k$, whereas 
$A_2$ is the point of intersection of the strict transforms to $\wt\C^2$ of
$C_j\times C_l$, $C_i\times C_l$ and $C_j\times C_k$. 

The components of $C$ will be viewed as components of $X_1$ and $X_2$. 
As for the remaining components, let $T$ be a reducible node of $C$. 
Let $C_m$ and $C_n$ be the components 
of $C$ containing $T$. For each $a=1,2$, let $E_{a,T,m}$ (resp. $E_{a,T,n}$) 
be the irreducible component of $\psi^{-1}(A_a,T)$ intersecting $C_m$ (resp. 
$C_n$). We need to understand when
\begin{equation}\label{TnRnS}
\begin{aligned}
&|\deg(\L_\psi|_{E_{a,T,m}})|\leq 1,\\
&|\deg(\L_\psi|_{E_{a,T,n}})|\leq 1,\\
&|\deg(\L_\psi|_{E_{a,T,m}})+\deg(\L_\psi|_{E_{a,T,n}})|\leq 1
\end{aligned}
\end{equation}
for each $T\in\N(C)$ and $a=1,2$.

Assume first that $T\neq R$ and $T\neq S$. Then the degree of $\L_\psi$ 
on $E_{a,T,m}$ or $E_{a,T,n}$ is the same as the degree of 
$$
\M:=\O_{\wt\C^3}(-\Z_{(i,k)}-\Z_{(i,l)}-\Z_{(j,k)}-\Z_{(j,l)}).
$$
By Proposition \ref{propblows2a}, independently of $\psi$, Inequalities 
\eqref{TnRnS} 
are satisfied for $a=1$ if and only if
\begin{align*}
&\big|\delta_{(i,k)}(m,n)-\delta_{(j,k)}(m,n)\big|\leq 1,\\
&\big|\delta_{(i,k)}(m,n)-\delta_{(i,l)}(m,n)\big|\leq 1,\\
&\big|\delta_{(j,k)}(m,n)-\delta_{(i,l)}(m,n)\big|\leq 1,
\end{align*}
and are satisfied for $a=2$ if and only if
\begin{align*}
&\big|\delta_{(j,l)}(m,n)-\delta_{(j,k)}(m,n)\big|\leq 1,\\
&\big|\delta_{(j,l)}(m,n)-\delta_{(i,l)}(m,n)\big|\leq 1,\\
&\big|\delta_{(j,k)}(m,n)-\delta_{(i,l)}(m,n)\big|\leq 1. 
\end{align*}

Assume now that $T=R$ but $T\neq S$. Use again Proposition \ref{propblows2a}. 
There are several cases to be checked but all of them yield that, 
independently of $\psi$, Inequalities \eqref{TnRnS} 
are satisfied for $a=1$ if and only if
\begin{align*}
&\big|\delta_{(i,k)}(i,j)-\delta_{(j,k)}(i,j)-1\big|\leq 1,\\
&\big|\delta_{(i,k)}(i,j)-\delta_{(i,l)}(i,j)\big|\leq 1,\\
&\big|\delta_{(j,k)}(i,j)-\delta_{(i,l)}(i,j)+1\big|\leq 1,
\end{align*}
and are satisfied for $a=2$ if and only if
\begin{align*}
&\big|\delta_{(j,l)}(i,j)-\delta_{(j,k)}(i,j)\big|\leq 1,\\
&\big|\delta_{(j,l)}(i,j)-\delta_{(i,l)}(i,j)+1\big|\leq 1,\\
&\big|\delta_{(j,k)}(i,j)-\delta_{(i,l)}(i,j)+1\big|\leq 1. 
\end{align*}

Assume $T=S$ but $T\neq R$. Use Proposition \ref{propblows2a}. 
Again, several cases need to be checked but all of them yield that, 
independently of $\psi$, Inequalities \eqref{TnRnS} 
are satisfied for $a=1$ if and only if
\begin{align*}
&\big|\delta_{(i,l)}(k,l)-\delta_{(i,k)}(k,l)+1\big|\leq 1,\\
&\big|\delta_{(j,k)}(k,l)-\delta_{(i,l)}(k,l)-1\big|\leq 1,\\
&\big|\delta_{(i,k)}(k,l)-\delta_{(j,k)}(k,l)\big|\leq 1,
\end{align*}
and are satisfied for $a=2$ if and only if
\begin{align*}
&\big|\delta_{(j,k)}(k,l)-\delta_{(i,l)}(k,l)-1\big|\leq 1,\\
&\big|\delta_{(j,l)}(k,l)-\delta_{(j,k)}(k,l)+1\big|\leq 1,\\
&\big|\delta_{(i,l)}(k,l)-\delta_{(j,l)}(k,l)\big|\leq 1. 
\end{align*}

Finally, assume that $T=S=R$. Using Proposition \ref{propblows2a}, 
independently of $\psi$, Inequalities \eqref{TnRnS} 
are satisfied for $a=1$ if and only if
$$
\big|\delta_{(i,j)}(i,j)-\delta_{(i,i)}(i,j)+1\big|\leq 1,
$$
and are satisfied for $a=2$ if and only if
$$
\big|\delta_{(i,j)}(i,j)-\delta_{(j,j)}(i,j)-1\big|\leq 1.
$$

Comparing the above inequalities with those in Definition \ref{admfun}, we see 
that $\L_\psi$ is $\psi$-admissible at $A_1$ and $A_2$ if and only if 
$\ul r$ is admissible at $(R,S)$.

Statement 3 follows from Statement 2 and Lemma \ref{lemblowindep}.
\end{proof}

\begin{Prop}\label{blowindep}
Let $\phi\:\wt\C^2\to\C^2$ 
be a good partial desingularization. Let 
$\psi_1\:\wt\C^3\to\wt\C^2\times_B\C$ and 
$\psi_2\:\wh\C^3\to\wt\C^2\times_B\C$ be good partial 
desingularizations. Then, if $\L_{\psi_1}$ is admissible, so is 
$\L_{\psi_2}$. If this is the case, and $\psi_{1*}\L_{\psi_1}$ defines a map to 
$\ol{\mathcal J}$, 
then so does $\psi_{2*}\L_{\psi_2}$, and the maps are equal.
\end{Prop}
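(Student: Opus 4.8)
The plan is to derive everything from Theorem \ref{isadmissible}, exploiting the fact that the resolution $\ul r$ attached to $\phi$ is defined using $\phi$ alone and is insensitive to the choice of $\psi$. For Statement~1, I would simply apply Theorem \ref{isadmissible}(3) twice: once to $\psi_1$ and once to $\psi_2$. It gives that $\L_{\psi_1}$ is $\psi_1$-admissible if and only if $\ul r$ is admissible, and that $\L_{\psi_2}$ is $\psi_2$-admissible if and only if the same $\ul r$ is admissible. Since the intermediate condition coincides, admissibility of $\L_{\psi_1}$ forces admissibility of $\L_{\psi_2}$.

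For Statement~2, assume $\L_{\psi_1}$ (hence, by Statement~1, $\L_{\psi_2}$) is admissible, and let $g_1\:\wt\C^2\to\ol{\mathcal J}$ be the map induced by $\psi_{1*}\L_{\psi_1}$. By Proposition \ref{famchain}(2), each $\psi_{i*}\L_{\psi_i}$ is a relatively torsion-free, rank-$1$ sheaf of relative degree $f$ whose formation commutes with base change; hence, for each closed point $A\in\wt\C^2$, its restriction to $p_1^{-1}(A)$ is $\mu_{X*}(\L_{\psi_i}|_X)$, where $X$ is the fiber of $\rho_i$ over $A$. First I would record the behaviour over the generic point of $B$: all the blowups defining $\phi$ and $\psi_i$ are along the diagonal (a smooth, hence Cartier, divisor on the smooth generic fiber) and along products of components of $C$ together with the $\wt\Delta_l$, all supported over the closed point, and the corrections $\Z_{(i,k)}$ are likewise supported over the closed point. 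Thus over the dense open $U_{-1}\subset\wt\C^2$ lying over the generic point of $B$ the maps $\phi$ and $\psi_i$ are isomorphisms and $\L_{\psi_i}$ restricts to the classical degree-$2$ Abel sheaf $\P|_X\ox\O_X(-Q_1-Q_2)$, independently of $i$; so $g_1|_{U_{-1}}$ is $\psi$-independent.

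The crux is the following claim, which I would prove next: for every closed point $A\in\wt\C^2$ the generalized multidegree on $C$ of $\mu_{X*}(\L_{\psi_i}|_X)$ is independent of $i$. By Proposition \ref{famchain} this generalized multidegree is determined by the degrees $\deg(\L_{\psi_i}|_{\wh C_m})$ on the non-contracted components together with the total degree of $\L_{\psi_i}$ on each chain of rational curves collapsed by $\mu_X$, the latter recording whether the corresponding node stays invertible after pushforward. These data are exactly what Proposition \ref{propblows2a} computes: the component degrees are governed by the symmetric sum $w(m)$ of \eqref{wm}, the chain totals by the combination $w(T,m_T)+w(T,n_T)$ read off from \eqref{wmT}, and the contributions of $\wt\Delta_1,\wt\Delta_2$ by \eqref{wmR} and \eqref{wmS}. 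All of these are expressed through the correction function $\ul\delta$ of $(\Gamma,\ul e,\ul q,v)$ and the intersection combinatorics of $\Gamma_C$; and although the individual chain-component degrees depend on the blowup order through the choices of $m_T,n_T$ and of the sequence $(r_{T,j},s_{T,j})$, the combinations that enter the pushforward are symmetric in those choices. Carrying out this bookkeeping carefully is the main obstacle of the proof.

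Granting the claim, the rest is formal. Since $g_1$ is defined, $\mu_{X*}(\L_{\psi_1}|_X)$ is $C_1$-quasistable for every fiber $X$ over a distinguished point of $\phi^{-1}(R,S)$ with $R,S\in\N(C)$; because $C_1$-quasistability depends only on the generalized multidegree, the claim transfers this to $\mu_{X*}(\L_{\psi_2}|_X)$ at the same fibers. As $\L_{\psi_2}$ is $\psi_2$-admissible, Lemma \ref{lemblowindep}(2) then yields a map $g_2\:\wt\C^2\to\ol{\mathcal J}$ induced by $\psi_{2*}\L_{\psi_2}$. Finally, $g_1$ and $g_2$ agree on the dense open $U_{-1}$; since $\ol{\mathcal J}$ is projective, hence separated, over $B$, and $\wt\C^2$ is integral, it follows that $g_1=g_2$ on all of $\wt\C^2$.
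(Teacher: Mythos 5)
Your handling of Statement 1 (via Theorem \ref{isadmissible}, using that $\ul r$ depends only on $\phi$) and of the final equality of the two maps (agreement over the dense open subset of $\wt\C^2$ lying over the generic point of $B$, plus separatedness of $\ol{\mathcal J}$) matches the paper. The gap is at your crux claim. It is false that the generalized multidegree of $\mu_{X*}(\L_{\psi_i}|_X)$ is determined by the degrees on the non-contracted components $\wh C_m$ together with the \emph{total} degree on each contracted chain. On a length-2 chain $E_1\cup E_2$ over a reducible node $T$, with $E_1$ meeting $\wh C_m$ and $E_2$ meeting $\wh C_n$, the multidegrees $(1,-1)$ and $(0,0)$ on $(E_1,E_2)$ are both admissible and have the same total, yet the direct images differ: with $(0,0)$ the direct image is invertible at $T$, while with $(1,-1)$ it is not (its reduction in the sense of Definition \ref{admfun2} carries a $-1$ on the chain, and the degree on the $C_n$-side drops by $1$, as one sees from the subcurve $Z$ minimizing the degree in the claim inside the proof of Proposition \ref{famchain}). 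So the distribution of degrees along each chain matters, not just the total. Worse, the data you propose to match are not themselves invariant under change of $\psi$: by Proposition \ref{propblows2a} the coefficients $w(T,m_T)$ and $w(T,n_T)$ depend on the order in which blowups affect $(A,T)$ (through the choices of $m_T$ and of the sequence $(r_{T,j},s_{T,j})$), and a twister $\O(\sum c_E E)$ supported on exceptional components also changes the degrees on the adjacent components $\wh C_m$. The ``bookkeeping'' you defer is therefore not a routine symmetry check but the actual mathematical content, and with the invariant you chose it cannot close.

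The paper circumvents the multidegree computation entirely. From Proposition \ref{propblows2a} one reads off that $\L_{\psi_1}|_X$ and $\L_{\psi_2}|_X$ differ by a twister of the form $\O_X(\sum c_E E)$, with $E$ running over the exceptional components of $X$; Proposition \ref{compadm} --- which your proposal never invokes, although it was proved precisely for this purpose --- then yields an isomorphism $\mu_{1*}(\L_{\psi_1}|_X)\cong\mu_{2*}(\L_{\psi_2}|_X)$ of sheaves. This is stronger than matching multidegrees and transfers $C_1$-quasistability immediately, after which Lemma \ref{lemblowindep} and your density argument finish the proof as you indicate. Note that Proposition \ref{compadm} is consistent with the counterexample above: $(1,-1)$ and $(0,0)$ do \emph{not} differ by an exceptional twister (a twister $\O(aE_1+bE_2)$ has degree $a$ on $\wh C_m$ and $b$ on $\wh C_n$, so degree zero there forces $a=b=0$), which is exactly why those two pushforwards may differ in general while the two sheaves $\L_{\psi_1}|_X$ and $\L_{\psi_2}|_X$ arising from good partial desingularizations cannot exhibit this discrepancy. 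To repair your proof you would replace the ``component degrees plus chain totals'' invariant by the exceptional-twister comparison, i.e., reconstruct the paper's argument.
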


\begin{proof} The first statement follows from Theorem \ref{isadmissible}, 
as $\ul r$ depends only on $\phi$. Suppose $\L_{\psi_1}$ and thus 
$\L_{\psi_2}$ are admissible. By Proposition \ref{famchain}, both 
$\psi_{1*}\L_{\psi_1}$ and $\psi_{2*}\L_{\psi_2}$ are relatively 
torsion-free, rank-1 sheaves on $\wt\C^2\times_B\C/\wt\C^2$ of relative 
degree $f$, with formation commuting with base change.

Let $\rho_i\:\wt\C^3\to\wt\C^2$ 
be the composition of $\psi_i$ with the 
projection $p_1\:\wt\C^2\times_B\C\to\wt\C^2$, for $i=1,2$. 
Suppose $\psi_{1*}\L_{\psi_1}$ defines a map to 
$\ol{\mathcal J}$. For each $(R,S)\in\N(C)^2$ and each distinguished point 
$A\in\phi^{-1}(R,S)$, let $X_i(A)$ be the fiber of $\rho_i$ over $A$ and 
$\mu_i(A)\:X_i(A)\to C$ the restriction to $X_i(A)$ 
of $\psi_i$ composed with the projection $p_2\:\wt\C^2\times_B\C\to\C$, 
for $i=1,2$. By Lemma \ref{lemblowindep}, the sheaf 
$\psi_{1*}\L_{\psi_1}$ defines a map to 
$\ol{\mathcal J}$ if and only if $\mu_1(A)_*(\L_{\psi_1}|_{X_1(A)})$ is 
$C_1$-quasistable with respect to $\ul e$ for every $A$ as above.
Now, $X_1(A)$ and $X_2(A)$ are $C$-isomorphic. 
Identify $X_1(A)$ with $X_2(A)$, calling both $X$. 
Then, it follows from Proposition \ref{propblows2a} 
that $\L_{\psi_1}|_X$ and $\L_{\psi_2}|_X$ 
differ by a twister ``supported'' on the exceptional components of $X$. 
Thus, by Proposition \ref{compadm}, we have that
$\mu_1(A)_*(\L_{\psi_1}|_X)\cong\mu_2(A)_*(\L_{\psi_2}|_X)$, and hence 
$\mu_2(A)_*(\L_{\psi_2}|_{X_2(A)})$ is 
$C_1$-quasistable with respect to $\ul e$ for each $(R,S)\in\N(C)^2$ and 
each distinguished point 
$A\in\phi^{-1}(R,S)$. Using Lemma \ref{lemblowindep} again, it follows 
that $\psi_{2*}\L_{\psi_2}$ defines a map to $\ol{\mathcal J}$. That the maps 
defined by $\psi_{1*}\L_{\psi_1}$ and $\psi_{2*}\L_{\psi_2}$ are equal is a 
consequence of the fact that they are equal on the fiber of 
$\wt\C^2$ over the generic point of $B$, which is dense 
in $\wt\C^2$.
\end{proof}

\subsection{Quasistability}

\begin{Def}\label{admfun2} 
Let $\Gamma$ be a connected graph without loops, 
with set of vertices $V$ 
and set of edges $E$. Let $i$ be a nonnegative integer. Let 
$\Gamma(i)$ be the graph obtained from $\Gamma$ by 
replacing each edge with a directed graph of $i+1$ edges. Then also 
$\Gamma(i)$ is connected without loops. 
Let $V(i)$ and $E(i)$ denote 
the sets of vertices and edges of $\Gamma(i)$, respectively.
Notice that $V$ may be viewed as a subset of $V(i)$; call 
the vertices of $V(i)-V$ \emph{exceptional}.

Recall that, for each $v\in V(i)$, 
$$
\underline c_v\:V(i)\to\mathbb Z
$$
is defined by letting $\ul c_v(w)$ be the number of edges of 
$\Gamma(i)$ with ends $v$ and $w$, for $w\neq v$, and 
$$
\ul c_v(v):=-\sum_{w\neq v}\ul c_v(w).
$$
Two functions $\ul d_1,\ul d_2\:V(i)\to\mathbb Z$ are said to be 
\emph{$\Gamma$-equivalent} if
$$
\ul d_1-\ul d_2=\sum_{v\in V(i)-V}a_v\ul c_v
$$
for some integers $a_v$. A function $\ul d\:V(i)\to\mathbb Z$ is
always 
$\Gamma$-equivalent to 
a unique function $\check{\ul d}\:V(i)\to\mathbb Z$ such that, for 
each edge $e\in E$, if $v_1,\dots,v_i$ are the exceptional vertices 
of $\Gamma(i)$ created 
on $e$, then $\check{\ul d}(v_j)=0$ for all $j=1,\dots,i$ with the 
possible exception of a single $j$ for which $\check{\ul d}(v_j)=-1$. Call 
$\check{\ul d}$ the \emph{reduction} of $\ul d$.

Let $(\Gamma,\ul e,\ul q,v)$ be 
degree-2 Abel data. Then there are natural degree-2 Abel data 
$(\Gamma(i),\ul e^i,\ul q^i,v)$, where $\ul e^i$ (resp. $\ul q^i$) 
restricts to $\ul e$ (resp. $\ul q$) on the vertices of $\Gamma$ and assigns 
0 to each exceptional vertex of $\Gamma(i)$. 

Assume $i=2$. Let $\ul\delta\:V^4\to\mathbb Z$ be the correction function of 
$(\Gamma,\ul e,\ul q,v)$. Let $\ul r\:E^2\to V^2$ be a resolution 
of the Abel data. Let $\check{\ul r}$ denote its mirror resolution. 
We associate to $\ul r$ two functions 
$\ul s_1,\ul s_2\:E^2\times V(2)\to\mathbb Z$, defined on a triple 
$(e_1,e_2,w)$ as follows:
\begin{enumerate}
\item If $w\in V$, let $\ul s_1(e_1,e_2,w):=\ul s_2(e_1,e_2,w):=\ul q(w)$. 
\item If $w\in V(2)-V$, let $e\in E$ be the edge of $\Gamma$ 
on which $w$ was created. 
Let $m$ be the end of $e$ to which $w$ is connected by an edge of $\Gamma(2)$, 
and let $n$ denote the other end. Set $(v_1,v_2):=\ul r(e_1,e_2)$ 
and $(w_1,w_2):=\check{\ul r}(e_1,e_2)$. Then put
\begin{align*}
\ul s_1(e_1,e_2,w):=&\ul\delta(v_1,v_2,m,n)+\ul\delta(v_1,w_2,m,n)+
\ul\delta(w_1,v_2,m,n)-\epsilon_1,\\
\ul s_2(e_1,e_2,w):=&\ul\delta(w_1,w_2,m,n)+\ul\delta(w_1,v_2,m,n)+
\ul\delta(v_1,w_2,m,n)-\epsilon_2,
\end{align*}
where
\begin{align*}
\epsilon_1:=&\#\{i\in\{1,2\}\,|\,v_i=m\text{ and }e_i=e\},\\
\epsilon_2:=&\#\{i\in\{1,2\}\,|\,w_i=m\text{ and }e_i=e\}.
\end{align*}
\end{enumerate}

We say that $\ul r$ is \emph{quasistable at} $(e_1,e_2)\in E^2$ if $\ul r$ is
admissible at $(e_1,e_2)$ and the functions 
$\ul s_1(e_1,e_2,-)$ and $\ul s_2(e_1,e_2,-)$ have $v$-quasistable reductions 
with respect to $\ul e^2$.  And we say that $\ul r$ is
\emph{quasistable} if $\ul r$ is quasistable at every $(e_1,e_2)\in E^2$.
\end{Def}

\begin{Thm}\label{isquasistable} 
Let $\Gamma$ be the essential dual graph of $C$. Let $V$ be its 
set of vertices and $E$ its set of edges. Let $\ul e\:V\to\mathbb Q$ and 
$\ul q\:V\to\mathbb Z$ be defined by setting $\ul e(C_i)=e_i$ and 
$\ul q(C_i):=\deg(\P|_{C_i})$ for every $i=1,\dots,p$. Set $v:=C_1$. 
Let $\phi\:\wt\C^2\to\C^2$ be a good partial desingularization. Let 
$\ul r\:E^2\to V^2$ be a function ``defined'' by sending each pair of 
reducible nodes $(R,S)$ to $(C_i,C_k)$, where 
$(R,S)\in C_i\times C_k$ and 
the strict transform to $\wt\C^2$ of $C_i\times C_k$ does not 
contain $\phi^{-1}(R,S)$. 
Let $\psi\:\wt\C^3\to\wt\C^2\times_B\C$ be a good  
partial desingularization. Then:
\begin{enumerate}
\item For each pair $(R,S)$ of reducible nodes of $C$, the resolution $\ul r$ is 
quasistable at $(R,S)$ if and only if there is an open neighborhood in
$\wt\C^2$ of the two
distinguished points of $\phi^{-1}(R,S)$ over which 
$\psi_*\L_\psi$ is a relatively torsion-free, rank-1 sheaf on
$\wt\C^2\times_B\C/\wt\C^2$ with formation commuting with base change and
defining a map to $\ol{\mathcal J}$.
\item The resolution $\ul r$ is quasistable  if and only if 
$\psi_*\L_\psi$ is a relatively torsion-free, rank-1 sheaf on
$\wt\C^2\times_B\C/\wt\C^2$ with formation commuting with base change and
defining a map $\ol{\mathcal J}$.
\end{enumerate}
\end{Thm}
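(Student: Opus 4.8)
The plan is to deduce Statement 2 from Statement 1 and then to prove Statement 1 by matching, one clause at a time, the two conditions in the definition of quasistability of $\ul r$ (Definition \ref{admfun2}) with the two conditions in the sheaf-theoretic conclusion. For the reduction of Statement 2 to Statement 1, I would use the stratification $\wt\C^2=U_{-1}\cup U_0\cup U_1\cup U_2$ from the proof of Lemma \ref{lemblowindep}, where the fibers of $\rho$ over $U_i$ are isomorphic to $C(i)$. By that lemma both the $\psi$-admissibility of $\L_\psi$ and the quasistability of $\psi_*\L_\psi$ need only be tested on $U_2$, that is, at the distinguished points of $\phi^{-1}(R,S)$ for $R,S\in\N(C)$; and the arguments there (preservation of degree, together with the openness of torsion-freeness and of quasistability) show that what is controlled by the data at a single pair $(R,S)$ is precisely the validity of the two sheaf conditions on a neighborhood of the corresponding pair of distinguished points. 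Since $\ul r$ is quasistable if and only if it is quasistable at every $(R,S)$, Statement 2 will follow by ranging over all pairs of reducible nodes once Statement 1 is established.

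For Statement 1, first I would treat admissibility. By Theorem \ref{isadmissible}(2), $\ul r$ is admissible at $(R,S)$ if and only if $\L_\psi$ is $\psi$-admissible at the two distinguished points of $\phi^{-1}(R,S)$; and by Proposition \ref{famchain}(2) the latter is equivalent, on a neighborhood of those points, to $\psi_*\L_\psi$ being a relatively torsion-free, rank-$1$ sheaf on $\wt\C^2\times_B\C/\wt\C^2$ whose formation commutes with base change. This disposes of the ``torsion-free, rank-$1$'' clause. It then remains, under the standing assumption of admissibility, to identify the ``defines a map to $\ol{\mathcal J}$'' clause with the $v$-quasistability of the reductions of $\ul s_1(R,S,-)$ and $\ul s_2(R,S,-)$.

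By Lemma \ref{lemblowindep}(2), once $\L_\psi$ is admissible, $\psi_*\L_\psi$ defines a map to $\ol{\mathcal J}$ near the two distinguished points if and only if $\mu_{X_a*}(\L_\psi|_{X_a})$ is $C_1$-quasistable with respect to $\ul e$ for the fibers $X_1,X_2\cong C(2)$ over those points. The key computational step is then to read off, from Proposition \ref{propblows2a}, the multidegree of $\L_\psi|_{X_a}$ on the vertices of the subdivided graph $\Gamma(2)$: on each original component $\wh C_m$ it is $\ul q(m)$ up to a twister supported on the exceptional curves, while on the exceptional curve over a reducible node $T$ it is given, via the relation $\ul\delta(i,k,m,n)=w_{(i,k)}(m)-w_{(i,k)}(n)$, by the combination of three $\ul\delta$-values appearing in Definition \ref{admfun2}, the correction $\epsilon_a$ being exactly the contribution of $\wt\Delta_1$ and $\wt\Delta_2$ (the $h(R,\cdot)$-coefficients of Proposition \ref{propblows2a}) that occurs when $T=R$ or $T=S$. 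Thus the multidegree of $\L_\psi|_{X_a}$ is $\Gamma$-equivalent to $\ul s_a(R,S,-)$, its reduction is the strongly $\mu$-admissible representative $\check{\ul s}_a$, and by Proposition \ref{compadm} this representative has the same pushforward as $\L_\psi|_{X_a}$.

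Finally I would apply Proposition \ref{famchain}(3) to the semistable modification $\mu\:C(2)\to C$, taking for $\E$ a bundle corresponding to $\ul e$ via \eqref{eE} so that $\mu^*\E$ corresponds to $\ul e^2$: for the strongly admissible representative, $C_1$-quasistability of the pushforward $\mu_*(\L_\psi|_{X_a})$ with respect to $\ul e$ is equivalent to $v$-quasistability of $\check{\ul s}_a$ with respect to $\ul e^2$. Combined with the previous paragraph, this yields the second clause and completes Statement 1. The main obstacle I anticipate is the bookkeeping in this last stretch: verifying that the multidegree extracted from Proposition \ref{propblows2a} really is $\Gamma$-equivalent to the representative $\ul s_a$ of Definition \ref{admfun2}---in particular tracking the $\epsilon_a$-shifts in the cases $T=R$ and $T=S$---and checking that the polarization $\ul e$ on $C$ matches $\ul e^2$ on $C(2)$ under \eqref{eE}, including the dualizing-sheaf term, so that the two notions of quasistability (on $\Gamma$ for the pushforward, on $\Gamma(2)$ for $\check{\ul s}_a$) are genuinely the same.
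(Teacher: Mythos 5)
Your proposal is correct and takes essentially the same approach as the paper: Statement 2 is reduced to Statement 1 via Lemma~\ref{lemblowindep}, the torsion-free/base-change clause is handled through Theorem~\ref{isadmissible} and Proposition~\ref{famchain}, and the quasistability clause is proved by extracting the multidegree of $\L_\psi$ on the fibers isomorphic to $C(2)$ from Proposition~\ref{propblows2a}, identifying it up to $\Gamma$-equivalence with $\ul s_a(R,S,-)$, and transferring quasistability through Propositions~\ref{compadm} and~\ref{famchain}. One small correction to your bookkeeping, which you rightly flagged as the delicate point: the shift $\epsilon_a$ arises from the transversal divisors $\Gamma_1,\Gamma_2$ in Proposition~\ref{propblows2a} meeting $E_{R,i}$ and $E_{S,k}$, not from the $h(R,\cdot)$-coefficients, whose contribution is supported on exceptional components and hence absorbed into the $\Gamma$-equivalence (in the paper, into the divisor $N$).
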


\begin{proof} Let $R$ and $S$ be reducible nodes of $C$. Let $A_1$ and
  $A_2$ be the two distinguished points of $\phi^{-1}(R,S)$. Let 
$\rho\:\wt\C^3\to\wt\C^2$ be the composition of $\psi$ with the 
projection $p_1\:\wt\C^2\times_B\C\to\wt\C^2$. For $a=1,2$, let 
$X_a:=\rho^{-1}(A_a)$ and let $\mu_a\:X_a\to C$ be the 
restriction of $\psi$ to $X_a$ composed with the second 
projection $p_2\:\wt\C^2\times_B\C\to\C$. 

We prove the first statement. By Theorem \ref{isadmissible}, the resolution $\ul r$ is 
admissible at $(R,S)$ if and only if $\L_\psi$ is $\psi$-admissible at
$A_1$ and $A_2$, thus if and only if 
$\L_\psi$ is $\psi$-admissible over a neighborhood $U$ in $\wt\C^2$ of
$A_1$ and $A_2$, by Proposition~\ref{famchain}, so if and only if 
$\psi_*\L_\psi$ is a relatively torsion-free, rank-1 sheaf on
$\wt\C^2\times_B\C/\wt\C^2$ with formation commuting with base change
over $U$. It remains to show that $\ul r$ is quasistable at $(R,S)$ if
and only if $\mu_{a*}\L_\psi|_{X_a}$ is $C_1$-quasistable with respect
to $\ul e$ for $a=1,2$.

For each $a=1,2$, the direct image 
$\mu_{a*}\L_\psi|_{X_a}$ is $C_1$-quasistable if and only if there is a 
twister $\T$ ``supported'' on the exceptional components of $X_a$ such that 
$\L_\psi|_{X_a}\ox\T$ is $C_1$-quasistable. 
Indeed, as observed in Definition \ref{admfun2}, there is a twister 
$\T$ ``supported'' on the exceptional components of $X_a$ such that 
$\L_\psi|_{X_a}\ox\T$ is strongly $\mu_a$-admissible. 
Since also $\L_\psi|_{X_a}$ is 
$\mu_a$-admissible, $\mu_{a*}(\L_\psi|_{X_a}\ox\T)\cong\mu_{a*}(\L_\psi|_{X_a})$ by 
Proposition~\ref{compadm}. So $\mu_{a*}(\L_\psi|_{X_a})$ is $C_1$-quasistable 
if and only if $\mu_{a*}(\L_\psi|_{X_a}\ox\T)$ is $C_1$-quasistable, 
if and only if $\L_\psi|_{X_a}\ox\T$ is $C_1$-quasistable, by 
Proposition \ref{famchain}.

The first statement of the theorem follows now 
from the following claim: For each $a=1,2$, 
there is a 
twister $\T$ ``supported'' on the exceptional components of $X_a$ such that 
$\L_\psi|_{X_a}\ox\T$ is $C_1$-quasistable if and only if $\ul s_a(R,S,-)$ 
has $C_1$-quasistable reduction with respect to $\ul e^2$.

Let us prove the claim for  $a=1$ only, the other case being 
similar. Set $A:=A_1$, $X:=X_1$ and $\mu:=\mu_1$. Let $C_i$ and $C_j$ be the distinct  
components of $C$ containing $R$, and $C_k$ and $C_l$ those containing $S$.  
Assume $\ul r(R,S)=(C_i,C_k)$. We may assume that $A$  
lies on the strict transforms  
of $C_i\times C_k$, $C_j\times C_k$ and $C_i\times C_l$.
We will denote by $C_m$ the strict transform under $\mu$ of the component 
$C_m$ of $C$, for each $m$. For each reducible 
node $T$ of $C$, let $m_T$ and $n_T$ be the distinct integers such that 
$T\in C_{m_T}\cap C_{n_T}$ and let $E_{T,m_T}$ and $E_{T,n_T}$ be the exceptional 
components of $X$ over $T$, with $E_{T,m_T}$ intersecting $C_{m_T}$ and 
$E_{T,n_T}$ intersecting $C_{n_T}$. 

As in the statement of Proposition \ref{propblows2a}, let 
$\lambda\:B\to\wt\C^2$ be any section of $\wt\C^2/B$ sending the special 
point $0$ of $B$ to $A$ and such that the pullbacks of the strict transforms 
of $C_i\times C_k$, $C_i\times C_l$ and $C_j\times C_k$ are all equal to 0. 
Form the Cartesian diagram
$$
\begin{CD}
\W @>\xi >> \wt\C^3\\
@V\rho_\lambda VV @V\rho VV\\
B @>\lambda >> \wt\C^2.
\end{CD}
$$
The fiber of $\W/B$ over $0$ is isomorphic to $X$ under $\xi$ and will also 
be denoted by $X$. 

Recall that $\wt\Delta_1$ and $\wt\Delta_2$ denote 
the strict transforms of $\Delta_1$ 
and $\Delta_2$ to $\wt\C^3$. By Proposition \ref{propblows2a}, 
there are relative effective Cartier 
divisors $\Gamma_1$ and $\Gamma_2$ of $\W/B$ such that 
$\xi^*\wt\Delta_1-\Gamma_1$ and $\xi^*\wt\Delta_2-\Gamma_2$ are effective and 
supported on the exceptional components of $X$. Furthermore, $\Gamma_1$ and 
$\Gamma_2$ intersect $X$ transversally, the first at $E_{R,i}$ and the 
second at $E_{S,k}$.

Let
$$
\Z:=\sum_{i,k=1}^p\Z_{(i,k)}=\sum_{i,k=1}^p\sum_{m=1}^pw_{(i,k)}(m)D_{i,k,m},
$$
where $D_{i,k,m}$ is the strict transform to $\wt\C^3$ of 
$C_i\times C_k\times C_m$ for each $i,k,m$. By Proposition 
\ref{propblows2a}, 
$$
\xi^*\Z=\sum_{m=1}^pw(m)C_m+\sum_{T\in\N(C)}
\big(w(T,m_T)E_{T,m_T}+w(T,n_T)E_{T,n_T}\big)
$$
for certain integers $w(m)$ and $w(T,m)$, for $m=1,\dots,p$ and 
$T\in\N(C)$. More precisely, 
\begin{align*}
w(m_T)-w(T,m_T)=&\delta_2(T)\\
w(T,m_T)-w(T,n_T)=&\delta_3(T)\\
w(T,n_T)-w(n_T)=&\delta_1(T),
\end{align*}
where 
$\delta_1(T),\delta_2(T)$ and $\delta_3(T)$ are 
$$
\delta_{(i,k)}(m_T,n_T),\quad \delta_{(j,k)}(m_T,n_T),\quad \delta_{(i,l)}(m_T,n_T)
$$
in a particular order. 

Set
$$
N:=\xi^*\Z+\sum_{T\in\N(C)}\big(\delta_2(T)E_{T,m_T}-\delta_1(T)E_{T,n_T}\big)
$$
and put 
$$
\L'_\psi:=\xi^*\psi^*p_2^*\P\ox\O_\W(-\Gamma_1-\Gamma_2)\ox\O_\W(-N).
$$
Then $\xi^*\L_\psi$ and $\L'_\psi$ differ by a twister ``supported'' on 
the exceptional components of $X$. Also,
$$
\deg(\L'_{\psi}|_Y)=\ul s_1(R,S,Y)
$$
for every component $Y$ of $X$. So, $\ul s_1(R,S,-)$ has $C_1$-quasistable 
reduction with respect to $\ul e^2$ if and only if there is a twister 
$\T$ ``supported'' on the exceptional components of $X$ such that 
$\L'_\psi|_X\ox\T$ is $C_1$-quasistable, thus if and only if there is a 
twister $\T$ ``supported'' on the exceptional components of $X$ such that 
$\L_\psi|_X\ox\T$ is $C_1$-quasistable.

To prove the second statement of the theorem, notice that, 
by Lemma \ref{lemblowindep}, the sheaf $\psi_*\L_\psi$ defines a map to 
$\ol{\mathcal J}$ if and only if $\mu_{X*}(\L_\psi|_X)$ is $C_1$-quasistable 
with respect to $\ul e$ for every fiber $X$ over a distinguished point 
of $\phi^{-1}(R,S)$ for $R,S\in\N(C)$. Then apply the first statement
already proved.
\end{proof}

\begin{Def}\label{admfun3} Let $(\Gamma,\ul e,\ul q,v)$ be 
degree-2 Abel data. Let $V$ be the set of vertices and $E$ the set of edges 
of $\Gamma$. The \emph{singular locus} 
$\Sigma$ of the Abel data is the maximum subset of 
$E^2$ on which any two quasistable resolutions $\ul r_1$ and $\ul r_2$ are
equivalent. 
Notice that $\Sigma$ contains the diagonal $\Delta_E$ of $E^2$. If
quasistable resolutions exist, then we say that $\Sigma$ is \emph{solvable}.

A \emph{blowup sequence} for the Abel data is a pair $(I_1,I_2)$ of sequences 
$I_1=(I_{1,1},\dots,I_{1,u})$ and $I_2=(I_{2,1},\dots,I_{2,u})$ of
equal lengths of proper nonempty subsets of $V$. It is called
\emph{symmetric} if the subsets are symmetric to each other, that is, 
whenever $I_{1,j}\neq I_{2,j}$ we have
$(I_{1,j-1},I_{2,j-1})=(I_{2,j},I_{1,j})$ or
$(I_{1,j+1},I_{2,j+1})=(I_{2,j},I_{1,j})$. We say that a pair
$(R,S)\in E^2$ is \emph{affected} by the blowup sequence if there is $j$ such that 
one and only one end of $R$ lies in $I_{1,j}$ 
and one and only one end of $S$ lies in $I_{2,j}$. We call the minimum
such $j$ the \emph{order} of $(R,S)$ in the blowup sequence.
The \emph{center} of the
blowup sequence is the subset of $E^2$ consisting of the pairs $(R,S)\in E^2$ such that
either $R=S$ or $(R,S)$ is affected.

We say that a blowup sequence $(I_1,I_2)$ \emph{resolves} the Abel
data if the following three conditions are verified:
\begin{enumerate}
\item The center of the blowup sequence contains $\Sigma$.
\item $\Sigma$ is solvable
\item For each $(R,S)\in\Sigma-\Delta_E$, 
any quasistable resolution $\ul r$ satisfies 
$\ul r(R,S)\in I_{1,j}\times (V-I_{2,j})$ or 
$\ul r(R,S)\in (V-I_{1,j})\times I_{2,j}$, where $j$ is the order of $(R,S)$.
\end{enumerate}
We say that $(I_1,I_2)$ resolves the Abel data \emph{minimally} if its
center is $\Sigma$.
\end{Def}

\begin{Thm}\label{mainthm} 
Let $\Gamma$ be the essential dual graph of $C$. Let $V$ be its 
set of vertices and $E$ its set of edges. Let $\ul e\:V\to\mathbb Q$ and 
$\ul q\:V\to\mathbb Z$ be defined by setting $\ul e(C_i):=e_i$ and 
$\ul q(C_i):=\deg(\P|_{C_i})$ for each $i=1,\dots,p$. Set
$v:=C_1$. Let $\Sigma$ be the singular locus of the Abel data
$(\Gamma,\ul e,\ul q,v)$. Assume that $\Sigma$ is solvable. 
Then the degree-2 rational map 
$\alpha^2_{\C/B}\:\C^2\dashrightarrow\ol{\mathcal J}$ is defined on
$\C^2-\Sigma$. Furthermore,
let $(I_1,I_2)$ be a blowup sequence, and let $u$ denote its length.
Let $\phi\:\wh\C^2\to\C^2$ be the sequence of blowups
$$
[\Delta],[X_1,Y_1],\dots,[X_u,Y_u],
$$
where $X_i$ is the union of those $C_j$ in $I_{1,i}$ and $Y_i$ is the union of 
those $C_j$ in $I_{2,i}$, for each 
$i=1,\dots,u$. Let 
$\wt\phi\:\wt\C^2\to\C^2$ and $\psi\:\wt\C^3\to\wt\C^2\times_B\C$ be 
good partial desingularizations. 
Let $\ul r\:E^2\to V^2$ be a function ``defined'' by sending 
each pair of reducible nodes $(R,S)$ to $(C_i,C_k)$, 
where $(R,S)\in C_i\times C_k$ and the strict transform to 
$\wt\C^2$ of $C_i\times C_k$ does not 
contain $\wt\phi^{-1}(R,S)$. Then:
\begin{enumerate}
\item If $(I_1,I_2)$ resolves the Abel data. 
then there is a map 
$$
\wh{\alpha}^2_{\C/B}\:\wh\C^2\lra\ol{\mathcal J}
$$
resolving $\alpha^2_{\C/B}$. In addition, if $\wt\phi=\phi\phi'$ for 
$\phi'\:\wt\C^2\to\wh\C^2$, then $\ul r$ is quasistable, 
in which case $\psi_*\L_\psi$
defines a map $\wt\alpha^2_{\C/B}\:\wt\C^2\to\ol{\mathcal J}$ 
agreeing with $\wh{\alpha}^2_{\C/B}\phi'$.
\item Conversely, if $(I_1,I_2)$
resolves the Abel data minimally and $\ul r$ is quasistable, then  
$\wt\phi$ factors through $\wh\C^2$.
\end{enumerate}
\end{Thm}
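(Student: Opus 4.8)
The plan is to reduce everything to the combinatorial dictionary already established. By Theorems~\ref{isadmissible} and \ref{isquasistable} and Proposition~\ref{blowindep}, for any good partial desingularization $\wt\phi\:\wt\C^2\to\C^2$ the associated resolution $\ul r$ is quasistable if and only if $\psi_*\L_\psi$ defines a morphism $\wt\alpha^2_{\C/B}\:\wt\C^2\to\ol{\mathcal J}$, and this morphism depends only on $\wt\phi$. Thus the geometric content splits into two problems: \emph{(A)} showing that a resolving blow-up sequence yields, after refinement to a good $\wt\phi=\phi\phi'$, a \emph{quasistable} $\ul r$, together with the descent of $\wt\alpha^2_{\C/B}$ along $\phi'$; and \emph{(B)} the converse factorization. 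The crux of \emph{(A)}, and of the first assertion that $\alpha^2_{\C/B}$ is defined on $\C^2-\Sigma$, is the following \emph{Key Lemma}: if $\wt\phi$ has quasistable $\ul r$ and $(R,S)\in E^2-\Sigma$, then $\wt\alpha^2_{\C/B}$ is constant on the exceptional curve $\wt\phi^{-1}(R,S)\cong\IP^1_K$.

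First I would refine $\phi$ to a good $\wt\phi=\phi\phi'$ by appending blow-ups, and show $\ul r$ is quasistable whenever $(I_1,I_2)$ resolves the data. Over each pair $(R,S)$ affected at order $j$, Proposition~\ref{lemabasis} shows that the blow-up $[X_j,Y_j]$ places $\wt\phi^{-1}(R,S)$ off precisely the two products indexed by $I_{1,j}\times(V-I_{2,j})$ and $(V-I_{1,j})\times I_{2,j}$; hence $\ul r(R,S)$ lies in this single equivalence class, and since $\phi$'s blow-ups come first in the sequence, this is the first blow-up affecting $(R,S)$. For $(R,S)\in\Sigma-\Delta_E$ the third condition of Definition~\ref{admfun3} puts every quasistable resolution in the same class, so $\ul r$ agrees there with a quasistable resolution (which exists as $\Sigma$ is solvable). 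For $(R,S)\in E^2-\Sigma$ the definition of $\Sigma$ furnishes two inequivalent quasistable resolutions, so \emph{both} classes of values are quasistable, and whichever class $\ul r$ lands in is quasistable; on $\Delta_E$ the value is forced to the diagonal. Since quasistability is checked pair by pair (Definition~\ref{admfun2}), $\ul r$ is quasistable, and Theorem~\ref{isquasistable} produces $\wt\alpha^2_{\C/B}$.

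The main obstacle is the Key Lemma. For $(R,S)\notin\Sigma$ both classes are quasistable, so I would realize them by two good partial desingularizations $\wt\phi,\wt\phi'$ (built as in Section~\ref{4}) chosen to agree away from $(R,S)$ and to blow up the two opposite directions there; this is legitimate because quasistability is pairwise, so modifying one value keeps the resolution quasistable. Theorem~\ref{isquasistable} gives resolving morphisms $\wt\alpha^2_{\C/B}$ and $\wt\alpha^{2\prime}_{\C/B}$, which coincide on the common dense open set as both resolve $\alpha^2_{\C/B}$. The local singularity at $(R,S)$ is the cone $x_0x_1=y_0y_1$; let $W$ be its full blow-up, with exceptional divisor $\IP^1_K\times\IP^1_K$, dominating both small resolutions by the two ruling contractions. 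Pulling the two morphisms back to $W$, they agree by separatedness of $\ol{\mathcal J}$, and the common morphism factors through \emph{each} ruling contraction, hence is constant on $\IP^1_K\times\IP^1_K$; therefore $\wt\alpha^2_{\C/B}$ is constant on $\wt\phi^{-1}(R,S)$. Granting this, the first assertion follows by the rigidity lemma: since $\wt\phi_*\O=\O$ (the cone has a rational resolution) and $\wt\alpha^2_{\C/B}$ is constant on every fiber of $\wt\phi$ over $(R,S)\notin\Sigma$, it descends there, so $\alpha^2_{\C/B}$ extends. Pairs involving an irreducible node lie off $\Sigma$: for $R\neq S$ no product of subcurves affects such a pair, so $\wt\phi$ is a local isomorphism, while the diagonal case is resolved by the flag scheme exactly as for integral curves (cf.~\cite{AK}), giving no obstruction. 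The same rigidity, applied to the curves contracted by $\phi'$ (all lying over $E^2-\Xi\subseteq E^2-\Sigma$), descends $\wt\alpha^2_{\C/B}$ to the desired $\wh\alpha^2_{\C/B}$ on $\wh\C^2$ with $\wt\alpha^2_{\C/B}=\wh\alpha^2_{\C/B}\phi'$, and $\wh\alpha^2_{\C/B}$ resolves $\alpha^2_{\C/B}$ since the two agree generically.

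Finally, for the converse I would argue that minimality forces the factorization. By the above $\ul r$ is quasistable, so on $\Sigma$ it lies in the unique quasistable class; and since $\Xi=\Sigma$, the third condition of Definition~\ref{admfun3} places $\phi$'s exceptional curves over exactly the same pairs, in the same equivalence class, hence in the same small-resolution direction. As $\wt\phi$ is good, the strict transforms of $\Delta$ and of all the $X_j\times Y_j$ are Cartier on $\wt\C^2$; matching of directions over $\Xi=\Sigma$ means the exceptional $\IP^1_K$'s of $\wt\phi$ lie on the same strict transforms as those of $\phi$, so no flop intervenes and the blow-up ideals of $\phi$ pull back to invertible sheaves on $\wt\C^2$. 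The universal property of blowing up then yields $\phi'\:\wt\C^2\to\wh\C^2$ with $\wt\phi=\phi\phi'$. It is exactly minimality that is needed: off $\Sigma$ both directions are quasistable, so a non-minimal $\phi$ could resolve a pair in a direction $\wt\phi$ does not follow, obstructing the morphism. I expect the verification that matching directions suffices for this scheme-theoretic lift, together with the Key Lemma, to demand the most care.
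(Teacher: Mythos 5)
Your architecture mirrors the paper's actual proof quite closely: the dictionary through Theorems~\ref{isadmissible} and \ref{isquasistable} and Proposition~\ref{blowindep}, the argument that a resolving sequence forces $\ul r$ to be quasistable (class forced on $\Sigma$ by condition 3, both classes quasistable off $\Sigma$, quasistability being pairwise), the constancy on exceptional fibers via the quadric $\IP^1_K\times\IP^1_K$ and its two rulings (which the paper leaves implicit in ``both maps factor through a set-theoretic map''), the descent via $\phi_*\O=\O$ (which the paper proves by the explicit presentation of $\I_{W|\IA^4_K\times\IP^1_K}$), and the local identification of $\wt\phi$ with $\phi$ in the converse. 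But there is a genuine gap in how you feed the combinatorics back into geometry. Your Key Lemma rests on realizing two good partial desingularizations ``chosen to agree away from $(R,S)$ and to blow up the two opposite directions there,'' and more generally on the claim that modifying a quasistable resolution at a single pair keeps it realizable. This is false in general: the only available centers are (strict transforms of) products $X\times Y$ of subcurves, and such a blowup fixes the class at \emph{every} pair in $(X\cap X')\times(Y\cap Y')$ simultaneously. For instance, if $C$ has two components meeting at two nodes $N_1,N_2$, every admissible center affects all four pairs $(N_a,N_b)$ at once, so the directions at distinct pairs are correlated and one cannot flip the direction at one pair while leaving the others untouched. Consequently a pairwise-quasistable function $\ul r\:E^2\to V^2$ need not arise from any good partial desingularization.

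This matters most for the very first assertion, that $\alpha^2_{\C/B}$ is defined on $\C^2-\Sigma$, where only solvability of $\Sigma$ is assumed: your proof requires \emph{some} good $\wt\phi$ whose induced $\ul r$ is quasistable at every pair, and the existence of such a $\wt\phi$ is essentially the existence of a blowup sequence resolving the Abel data, which the theorem does not assume (and which the paper presents as conjectural in general). The paper circumvents exactly this by working one pair at a time: for each $(R,S)$ it takes the two \emph{single} blowups $\phi_1^{(R,S)}=[i,k]$ and $\phi_2^{(R,S)}=[i,l]$ determined by a quasistable resolution $\ul r'$ and its mirror, and invokes the pairwise, local form of Theorem~\ref{isquasistable}(1), which produces a morphism on a neighborhood of the two distinguished points over $(R,S)$ \emph{regardless} of the (possibly non-quasistable) classes induced at other pairs; the set-theoretic descent plus $\phi_{1*}\O_\X=\O_{\C^2}$ then yields the extension at $(R,S)\in E^2-\Sigma$, and degree preservation handles $\C^2-E^2$. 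Your Key Lemma can be repaired along the same lines — compare the two locally defined morphisms on a common modification whose exceptional locus is the quadric, using that they agree over the generic point of $B$ — but as written, the global realizability step it depends on fails, and with it your proofs of the first assertion and of the descent in Statement (1).
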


\begin{proof} Assume that $\Sigma$ is solvable. Then there is a quasistable
  resolution $\ul r'$ of the Abel data. For each $(R,S)\in E^2$, 
let 
$$
\phi^{(R,S)}_1:=[i,k]\:\X\to\C^2\quad\text{and}\quad
\phi^{(R,S)}_2:=[i,l]\:\Y\to\C^2
$$
be the two different blowups,
where $\ul r'(R,S)=(C_i,C_k)$ and $\check{\ul r}'=(C_j,C_l)$. By Theorem
\ref{isquasistable}, the composition $\alpha^2_{\C/B}\phi_2^{(R,S)}$ is
defined on a neighborhood of the two distinguished points of $\Y$ over
$(R,S)$. Using preservation of the degree, as in the proof of 
Lemma~\ref{lemblowindep}, it follows that $\alpha^2_{\C/B}$ is defined
on $\C^2-E^2$.

Let now $(R,S)\in E^2-\Sigma$. Set $\phi_i:=\phi^{(R,S)}_i$ for
$i=1,2$. By definition of $\Sigma$, we have that $\check{\ul r}'$ is 
quasistable at $(R,S)$. Then, again by Theorem
\ref{isquasistable}, also $\alpha^2_{\C/B}\phi_1$ is
defined on a neighborhood of the two distinguished points of $\X$ over
$(R,S)$. It follows that both maps factor through a 
set-theoretic map $U\to\ol{\mathcal J}$ defined on a Zariski
neighborhood $U\subseteq\C^2$ of $(R,S)$ and agreeing with
$\alpha^2_{\C/B}$ away from $(R,S)$. 
Since $\phi_1$ (or $\phi_2$) is proper and surjective, the map 
$U\to\ol{\mathcal J}$ is continuous. To show it is a morphism of
schemes, we need only show now that 
$\phi_{1*}\O_\X=\O_{\C^2}$. 

We need only show the above equality at $(R,S)$. So we may work locally 
analytically. In this setup, we consider the closed subscheme 
$Z\subset\IA^4_K$ given by $x_0x_1=y_0y_1$ and the blowup 
$W\subset\IA^4_K\times\IP^1_K$ given by $\alpha'x_0=\alpha y_1$ and 
$\alpha'y_0=\alpha x_1$. Consider the projection 
$p_1\:\IA^4_K\times\IP^1_K\to\IA^4_K$. We need to show that $p_{1*}\O_W=\O_Z$. 

Consider the following diagram of exact sequences
\begin{equation}\label{bldiag}
\begin{CD}
0 @>>> \I_{Z|\IA^4_K} @>>> \O_{\IA^4_K} @>>> \O_Z @>>> 0\\
@. @VVV @VVV @VVV @.\\
0 @>>> p_{1*}\I_{W|\IA^4_K\times\IP^1_K} @>>> p_{1*}\O_{\IA^4_K\times\IP^1_K} @>>> 
p_{1*}\O_W @.\\
\end{CD}
\end{equation}
The middle vertical map is an isomorphism. Now, $\I_{W|\IA^4\times\IP^1}$ 
has the following presentation:
$$
0\lra\O_{\IA^4_K\times\IP^1_K}(-2) \lra \O_{\IA^4_K\times\IP^1_K}(-1)\oplus
\O_{\IA^4_K\times\IP^1_K}(-1)\lra \I_{W|\IA^4_K\times\IP^1_K} \lra 0.
$$
From the long exact sequence of higher direct images of $p_1$, we get 
$$
R^1p_{1*}\I_{W|\IA^4_K\times\IP^1_K}=0\quad\text{and}\quad
p_{1*}\I_{W|\IA^4_K\times\IP^1_K}\cong R^1p_{1*}\O_{\IA^4_K\times\IP^1_K}(-2)
\cong\O_{\IA^4_K}.
$$
So, the rightmost map in the bottom row of Diagram \eqref{bldiag} is 
surjective. Thus, since the middle vertical map is an
isomorphism, the comorphism $\O_Z\to p_{1*}\O_W$ is surjective. On the
other hand, it is clearly injective.

Assume now that $(I_1,I_2)$ resolves the Abel data. Then, for each
$(R,S)\in\Sigma$, we have that 
$\ul r'(R,S)=(C_i,C_k)$, where $(R,S)\in C_i\times C_k$ and the strict
transform of $C_i\times C_k$ under $\phi$ does not 
contain $\phi^{-1}(R,S)$. To prove Statement (1), we may assume that 
$\wt\phi$ factors through 
$\phi$. Then $\ul r'$ and $\ul r$ are equivalent on $\Sigma$. 
Furthermore, by definition of $\Sigma$, if $(R,S)\in E^2-\Sigma$, then there
is a quasistable resolution $\ul r''$ of the Abel data that is not
equivalent to $\ul r'$ at $(R,S)$. Thus, either $\ul r$ is equivalent
to $\ul r'$ or to $\ul r''$ at $(R,S)$. Since quasistability
is checked on each pair of reducible nodes, it follows that $\ul r$
is itself quasistable.

By Theorem~\ref{isquasistable}, since $\ul r$ is quasistable, 
$\psi_*\L_\psi$ defines a map 
$\wt\alpha^2_{\C/B}\:\wt\C^2\to\ol{\mathcal J}$. We need only prove that
$\wt\alpha^2_{\C/B}$ factors through $\wh\C^2$. This is clearly true
over the points of $\wh\C^2$ where $\phi'$ is an isomorphism, thus 
over $\phi^{-1}(\Sigma)$. This is also true away from $\Sigma$, 
since we have proved that $\alpha^2_{\C/B}$ is defined on
$\C^2-\Sigma$, and since 
$\alpha^2_{\C/B}\wt\phi$ agrees with $\wt\alpha^2_{\C/B}$ over the
generic point of $B$, so wherever the former is defined.

Finally, assume that $(I_1,I_2)$
resolves the Abel data minimally and that $\ul r$ is quasistable.
Statement (2) is local, so we need only check 
that $\wt\phi$ factors through $\phi$ at pairs $(R,S)$ 
of distinct reducible nodes of $C$. So, let $(R,S)$ be
such a pair. Let $C_i$ and $C_j$ be the components of $C$ containing
$R$ and $C_k$ and $C_l$ those of $S$. In the blowup sequence defining
$\phi$, let $[W,Z]$ be the first blowup in which $R\in W\cap W'$ and
$S\in Z\cap Z'$. Without loss of generality, we may assume that
$C_i\subseteq W$ and $C_l\subseteq Z$. Then, in a neighborhood of
$(R,S)$, the map $\wt\phi$ is equal both to the blowup $[i,l]$ and the
blowup $[j,k]$. Also $\ul r(R,S)=(i,k)$ or 
$\ul r(R,S)=(j,l)$. If $(R,S)\not\in\Sigma$, then $\phi$ is an isomorphism
at $(R,S)$, and thus $\wt\phi$ factors through $\phi$ at $(R,S)$. On
the other hand, if $(R,S)\in\Sigma$, then, since $\ul r$ is quasistable,
it follows the smallest integer $m$ such that 
$R\in X_m\cap X'_m$ and $S\in Y_m\cap Y'_m$ 
is such that $\ul r(R,S)\in I_{1,m}\times (V-I_{2,m})$ or 
$\ul r(R,S)\in (V-I_{1,m})\times I_{2,m}$. In any case, it follows that either
$$ 
C_i\subseteq X_m,\quad C_j\subseteq X'_m,\quad C_l\subseteq Y_m,\quad  
C_k\subseteq Y'_m
$$ 
or
$$
C_j\subseteq X_m,\quad C_i\subseteq X'_m,\quad C_k\subseteq Y_m,\quad 
C_l\subseteq Y'_m.
$$
In any case,  in a neighborhood of
$(R,S)$, the map $\phi$ is equal both to the blowup $[i,l]$ and the
blowup $[j,k]$, and thus equal to $\wt\phi$. 
\end{proof}

\section{Examples}\label{7}

There are {\cocoa} scripts to determine whether the singular
locus of given degree-2 Abel data 
is solvable and, if so, whether a symmetric blowup
sequence resolving minimally the Abel data exists. They are available
at 
$$
\text{\rm http://w3.impa.br/$\sim$esteves/CoCoAScripts/Abelmaps}.
$$
The scripts were applied to various
Abel data, and the output has always been positive. 

Throughout this section, $(\Gamma,\ul e,\ul q,v)$ will denote degree-2 
Abel data. Denote by $V$ the set of vertices of $\Gamma$.

First of all, the various notions associated to degree-2 Abel data
$(\Gamma,\ul e,\ul q,v)$ depend on the associated correction function 
$\ul\delta$, which in turn depends only on the difference 
$\ul q-\ul e$. Thus we may assume that $\ul q$ is any fixed function
$V\to\mathbb Z$. We will assume that
\begin{equation}\label{qfix}
\ul q(w)=
\begin{cases}
2&\text{if }w=v,\\
0&\text{if }w\neq v
\end{cases}
\end{equation}

In particular, $\ul e$ must have degree 0. 
A possible choice of $\ul e$ is the zero function. Though many
other choices are possible, it follows from the definition that the
correction function $\ul\delta$ of $(\Gamma,\ul e,\ul q,v)$ is the
same as that of $(\Gamma,\ul e+\sum_ia_i\ul c_i,\ul q,v)$, for any
choice of integers $a_1,\dots,a_p$, where the $\ul c_i$ are given in 
Definition~\ref{Abdata}. We may thus assume that $-\ul e$ is
$v$-quasistable with respect to any fixed polarization, say the zero
polarization. In other words, we may assume that
\begin{equation}\label{upol}
-\frac{k_I}{2}\leq\sum_{i\in I}\ul e(i)<\frac{k_I}{2}
\end{equation}
for each proper nonempty subset $I\subset V$ containing $v$.

Now, on $\mathbb R^p$, with coordinates $x_1,\dots,x_p$, consider the
subspace $H_p\subset\mathbb R^p$ given by
$$
x_1+\cdots+x_p=0.
$$
The affine subspaces of $H_p$ given by 
\begin{equation}\label{affsp}
\sum_{i\in I}x_i+\frac{k_I}{2}=a_I,
\end{equation}
where $I$ ranges through all proper subsets of $V$ containing
$v$ and the
$a_I$ through $\mathbb Z$, induce a stratification
$\Xi=\Xi(\Gamma)$ of $H_p$ by convex strata. A stratum is a connected
component of the intersection of certain subspaces of the form \eqref{affsp}
with the complements of certain subspaces of the same form.

Let $\Xi_0=\Xi_0(\Gamma)$ be the collection of strata of $\Xi$ whose points
$(x_1,\dots,x_p)$ satisfy
$$
-\frac{k_I}{2}\leq\sum_{i\in I}x_i<\frac{k_I}{2}
$$
for each proper nonempty subset $I\subset V$ containing $v$. Then
$\Xi_0$ is a finite collection. A polarization $\ul e$ of degree 0 satisfying \eqref{upol} 
for each proper nonempty subset $I\subset V$ containing $v$ belongs to
one of the strata in $\Xi_0$. Furthermore, if $\ul e$ and $\ul e'$ are
polarizations lying on the same stratum, the corresponding Abel data 
$(\Gamma,\ul q,\ul e, v)$ and $(\Gamma,\ul q,\ul e', v)$ have the same
correction function.

In other words, for a given connected graph without loops $\Gamma$, we
need only apply our Cocoa scripts finitely many times to show that every
Abel data $(\Gamma,\ul e,\ul q,v)$ supported in $\Gamma$ has solvable
singular locus and admits a symmetric blowup sequence resolving it
minimally.

In the following examples, Abel data $(\Gamma,\ul e,\ul q,v)$ 
are presented in the following way: 
The graph $\Gamma$ is given by its intersection 
matrix, $\Xi$, which is a square matrix whose rows (and columns) are in 
bijective correspondence with the set of vertices of $\Gamma$ and whose entry 
at position $(i,j)$ is the number of edges between the $i$-th and $j$-th 
vertices if $i\neq j$, and the negative of the number of edges with end at the 
$i$-th vertex if $i=j$. We will assume $v$ is the first vertex and $\ul q$ is 
fixed as above. The polarization $\ul e$ is given as a tuple.

\begin{Exa}\label{7.1} 
Consider the graph $\Gamma$ whose intersection matrix is 
$$
\left[\begin{matrix}
-2&1&1&0\\
1&-5&3&1\\
1&3&-6&-2\\
0&1&2&-3
\end{matrix}\right]
$$
Set $v:=1$ and $\ul q:=(2,0,0,0)$. If we set $\ul e:=(0,0,0,0)$, then the 
{\cocoa} scripts tell us that the Abel data has solvable
singular locus, and the blowup sequence 
$$
(\{1\},\{1\}),\, (\{4\},\{4\})
$$ 
resolves it minimally. We get the same result if we set 
$\ul e:=(0,-1/2,0,1/2)$. On the other hand, if we set $\ul e=(0,1/2,0,-1/2)$, 
the Abel data has solvable singular locus, but the blowup sequence that 
resolves it minimally is
$$
(\{1\},\{1\}).
$$
So, as expected, the resolution depends on the polarization.
\end{Exa}

\begin{Exa} In \cite{CP}, Abel maps for curves of compact type are studied. 
Abel maps of every degree are constructed, starting from the degree-1 Abel 
maps constructed in \cite{CE} and \cite{CCE}, using the fact that the 
generalized Jacobian of a curve of compact type is projective. Arguably, 
the simplest curves not of compact type are the ``circular curves,'' 
whose dual graph has the following intersection matrix:
$$
\left[\begin{matrix}
-2&1&0&0&\dots&0&1\\
1&-2&1&0&\dots&0&0\\
0&1&-2&1&\dots&0&0\\
\vdots&\vdots&\vdots&\vdots&\ddots&\vdots&\vdots\\
0&0&0&0&\dots&1&0\\
0&0&0&0&\dots&-2&1\\
1&0&0&0&\dots&1&-2
\end{matrix}\right].
$$
As before, we set $v:=1$ and $\ul q:=(2,0,\dots,0)$. 
To simplify, set $\ul e:=(0,\dots,0)$. 

The simplest case is that of 
graphs with two vertices only. Then the Abel data is solvable and 
$$
(\{1\},\{1\}).
$$
resolves it minimally. For three vertices, again the Abel data is solvable, 
and 
$$
(\{1\},\{1\}),\,(\{2\},\{2\}),\,(\{3\},\{3\})
$$
resolves it minimally. The case of four vertices is more interesting: a 
minimal resolution is
$$
(\{1\},\{1\}),\,(\{2\},\{2\}),\,(\{1,2\},\{1,2\}),\,(\{3\},\{3\}),\,
(\{2,3\},\{2,3\}),\,(\{1,3\},\{1,3\}).
$$
Finally, the case of five vertices may perhaps indicate the pattern: a 
minimal resolution is
\begin{align*}
&(\{1\},\{1\}),\,(\{2\},\{2\}),\,(\{1,2\},\{1,2\}),\,(\{3\},\{3\}),\,
(\{2,3\},\{2,3\}),\,(\{1,3\},\{1,3\}),\\
&(\{4\},\{4\}),\,(\{3,4\},\{3,4\}),\,
(\{2,4\},\{2,4\}),\,(\{1,4\},\{1,4\}).
\end{align*}
\end{Exa}

\begin{Exa} Assuming $\ul q$ is of the form 
\eqref{qfix}, if we assume the polarization $\ul e $ satisfies
Inequalitites \eqref{upol}, 
heuristically, the further the inequalities are from being equalities, the 
smallest a minimal resolution of the Abel data is. 
And, indeed, if
$$
2-\frac{k_I}{2}\leq\sum_{i\in I}\ul e(i)<\frac{k_I}{2}
$$
for each proper nonempty subset $I\subset V$ containing $v$, the empty 
sequence resolves the Abel data minimally. So, for instance, consider 
the graph $\Gamma$ whose intersection matrix is 
$$
\left[\begin{matrix}
-4&2&2&0\\
2&-7&3&2\\
2&3&-7&2\\
0&2&2&-4
\end{matrix}\right],
$$
a small variation of that in Example \ref{7.1}
Set $v:=1$ and $\ul q:=(2,0,0,0)$. If we set $\ul e:=(0,0,0,0)$, then the 
empty sequence resolves the Abel data minimally, whereas if we set 
$\ul e:=(-1/2,0,1/2,0)$, then 
$$
(\{1\},\{1\}).
$$
resolves it minimally, and if $\ul e:=(-1,-1,1,1)$, then 
$$
(\{1\},\{1\}),\, (\{4\},\{4\})
$$ 
resolves it minimally.
\end{Exa}

\begin{Rem} The reader might have observed that in all the blowup sequences 
$(I_1,I_2)$ above we had $I_{1,j}=I_{2,j}$ for every $j$. Indeed, the
{\cocoa} scripts 
are written in such a way that those special symmetric blowup sequences are 
preferred over the others.
\end{Rem}

\vskip0.5cm 

{\smallsc Universidade Federal Fluminense, Instituto de Matem\'{a}tica,
Rua M\'{a}rio Santos Braga, s/n, Valonguinho, 24020--005 Niter\'{o}i RJ, 
Brazil}

{\smallsl E-mail address: \small\verb?coelho@impa.br?}

\vskip0.5cm

{\smallsc Instituto Nacional de Matem\'atica Pura e Aplicada, 
Estrada Dona Castorina 110, 22460--320 Rio de Janeiro RJ, Brazil}

{\smallsl E-mail address: \small\verb?esteves@impa.br?}

\vskip0.5cm

{\smallsc Universidade Federal Fluminense, Instituto de Matem\'{a}tica,
Rua M\'{a}rio Santos Braga, s/n, Valonguinho, 24020--005 Niter\'{o}i RJ, 
Brazil}

{\smallsl E-mail address: \small\verb?pacini@impa.br?}

\end{document}